\providecommand{\arxiv}[2][]{\href{http://www.arXiv.org/abs/#2}{arXiv:#2}}
\newtheorem{theorem}{Theorem}
\newtheorem{proposition}[theorem]{Proposition}
\newtheorem{lemma}[theorem]{Lemma}
\newtheorem{corollary}[theorem]{Corollary}
\newtheorem{definition}[theorem]{Definition}
\newtheorem{propositiondefinition}[theorem]{Proposition-Definition}
\newtheorem{remark}[theorem]{Remark}
\newtheorem{example}[theorem]{Example}
\newcommand{\R}{\mathbb{R}}
\newcommand{\B}{\mathbb{B}}
\newcommand{\GG}{\mathcal{G}}
\newcommand{\BB}{\mathscr{B}}
\renewcommand{\HH}{\mathscr{H}}
\renewcommand{\CC}{\mathscr{C}}
\renewcommand{\DD}{\mathscr{D}}
\renewcommand{\SS}{\mathscr{S}}
\renewcommand{\tangent}{\mathscr{T}}
\newcommand{\reachgraph}{\mathcal{R}}
\newcommand{\NN}{\mathscr{N}}
\newcommand{\mcA}{\mathcal{A}}
\newcommand{\mcB}{\partial\CC}
\newcommand{\mcC}{\mathcal{C}}
\newcommand{\ipolar}[2][i]{{#2}^\circ_{#1}}
\newcommand{\mydef}{:=}
\newcommand{\mpinverse}[1]{#1^{-}}
\newcommand{\vectinverse}[1]{#1^{-}}
\newcommand{\compl}[1]{[n]\setminus #1}
\newcommand{\projective}{\mathbb{P}}
\newcommand{\proj}{\mathbb{P}}
\newcommand{\projmax}{\mathbb{P}_{\max}}
\newcommand{\projspace}[1][n-1]{\proj^{#1}}
\newcommand{\projmaxspace}[1][n-1]{\projmax^{#1}}
\newcommand{\myemph}[1]{\emph{#1}}
\renewcommand{\equiv}{\sim}
\DeclareMathOperator{\apex}{\mathsf{apex}}
\DeclareMathOperator{\sect}{\mathsf{sect}}
\DeclareMathOperator{\type}{\mathsf{type}}
\newcounter{tikzfigures}
\title{Minimal external representations of tropical polyhedra}
\author{Xavier {A}llamigeon}
\address{INRIA and CMAP, \'Ecole Polytechnique, 91128 Palaiseau Cedex France}
\email{xavier.allamigeon@inria.fr}
\author{Ricardo D. Katz}
\address{CONICET. Postal address:\!\! Instituto de Matem\'atica 
``Beppo Levi'',\! Universidad Nacional de Rosario, 
Avenida Pellegrini 250, 2000 Rosario, Argentina.}
\email{rkatz@fceia.unr.edu.ar}
\keywords{Tropical convexity, max-plus convexity, polyhedra, polytopes, supporting half-spaces, external representations, cell complexes}
\date{\today}
\subjclass[2010]{14T05, 52B05, 52A01}
\date{December 21, 2012}
\begin{document}
\maketitle
	
\begin{abstract}
Tropical polyhedra are known to be representable externally, as intersections of finitely many tropical half-spaces. However, unlike in the classical case, the extreme rays of their polar cones provide external representations containing in general superfluous half-spaces. In this paper, we prove that any tropical polyhedral cone in $\R^n$ (also known as ``tropical polytope'' in the literature) 
admits an essentially unique minimal  
external representation. The result is obtained by establishing a (partial) anti-exchange property of half-spaces. Moreover, we show that the apices of the half-spaces appearing in such 
non-redundant external representations are vertices of the cell complex associated with the polyhedral cone. We also establish a necessary condition for a vertex of this cell complex to be the apex of a non-redundant half-space. It is shown that this condition is sufficient for a dense class of polyhedral cones having ``generic extremities''.
\end{abstract}

\section{Introduction}

Tropical convex geometry consists in the study of the analogues of convex sets in tropical algebra. In this paper, we consider the max-plus semiring $\maxplus$ instantiation of tropical algebra, dealing with the set $\R\cup \{ -\infty \}$ equipped with $x \mpplus y \mydef \max \{ x,y \}$ as addition and $x \mptimes y \mydef x + y$ as multiplication. 
Thus, in the max-plus semiring, $-\infty$ is the neutral element for addition and $0$ is the neutral element for multiplication. 
The $n$-fold product space $\maxplus^n$ carries the structure of a semimodule over $\maxplus$ when equipped with the tropical scalar multiplication $(\lambda,x) \mapsto \lambda x \mydef (\lambda + x_1, \ldots ,\lambda + x_n)$ and the component-wise tropical addition $(x,y) \mapsto x \mpplus y \mydef (\max \{ x_1,y_1 \}, \ldots ,\max \{ x_n,y_n \})$. 

Since any number is ``non-negative'' in the max-plus semiring 
(\ie{}, greater than or equal to the neutral element for addition $-\infty$), 
following the analogy with ordinary convexity, 
a subset $\CC \subset \maxplus^n$ is said to be a 
\myemph{tropical convex set} if 
\begin{equation}\label{DefTropConvex}
\lambda x \mpplus \mu y \in \CC \makebox{ for all } x,y \in \CC \makebox{ and } \lambda, \mu \in \maxplus \makebox{ such that } \lambda \mpplus  \mu = 0.
\end{equation}
Similarly, $\CC$ is called a \myemph{tropical (convex) cone} when~\eqref{DefTropConvex} is satisfied without the condition $\lambda \mpplus  \mu = 0$. 

A tropical cone $\CC$ is said to be \myemph{polyhedral} when it can be generated by finitely many vectors, meaning that there exists 
$\{v^1,\ldots , v^p\}\subset \maxplus^n$ such that
\begin{equation}\label{DefPolyCone}
\CC = \left\{ v \in \maxplus^n \mid v =\lambda_1 v^1 \mpplus \dots \mpplus \lambda_p v^p 
\makebox{ for some } \lambda_1,\ldots ,\lambda_p \in \maxplus \right\} \; . 
\end{equation}  
In this paper, we mainly deal with the situation in which all the generators $v^i$ belong to $\R^n$. In this case, we say that $\CC$ is a \myemph{real polyhedral cone}. 

It is worth mentioning that our terminology differs from the one introduced by Develin and Sturmfels in~\cite{DS}, where tropical cones are called \myemph{tropical convex sets}, and real polyhedral cones are referred to as \myemph{tropical polytopes}. 

Tropical cones are closed under tropical scalar multiplication. In consequence, it turns out to be convenient to identify a real polyhedral cone with its image in the \myemph{real projective space}
\[
\projspace \mydef \R^n / (1, \dots, 1)\R \; .
\] 
We adopt this approach here and, for visualization purposes, represent a vector 
$(x_1,\ldots ,x_n)\in \R^n$ by the point $(x_2-x_1,\ldots,x_n-x_1)$ of $\R^{n-1}$. 
For example, the real polyhedral cone generated by $v^1=(0,1,3)$, $v^2=(0,4,1)$ and $v^3=(0,9,4)$ is depicted in Figure~\ref{FigTropicalCone}. This tropical cone is given by the bounded gray region together with the line segments joining the points $v^1$ and $v^3$ to it.
\begin{figure}
\begin{center}
\begin{tikzpicture}[convex/.style={draw=black,ultra thick,fill=lightgray,fill opacity=0.7},point/.style={blue!50},>=triangle 45,
shsborder/.style={green!60!black,dashdotted,ultra thick},
shs/.style={draw=none,pattern=north west lines,pattern color=green!60!black,fill opacity=0.5},
sapex/.style={green!60!black},
]
\draw[gray!30,very thin] (-0.5,-0.5) grid (10.5,6.5);
\draw[gray!50,->] (-0.5,0) -- (10.5,0) node[color=gray!50,below] {$x_2-x_1$};
\draw[gray!50,->] (0,-0.5) -- (0,6.5) node[color=gray!50,above] {$x_3-x_1$};

\node[coordinate] (v1) at (1,3) {};
\node[coordinate] (v12) at (4,3) {};
\node[coordinate] (v2) at (4,1) {};
\node[coordinate] (v23) at (6,1) {};
\node[coordinate] (v3) at (9,4) {};
\node[coordinate] (v31) at (8,3) {};

\filldraw[convex] (v1) -- (v12) -- (v2) -- (v23) -- (v3) -- (v31) -- cycle;
\filldraw[point] (v1) circle (3pt) node[left=3pt] {$v^1$};
\filldraw[point] (v2) circle (3pt) node[below left=1.5pt] {$v^2$};
\filldraw[point] (v3) circle (3pt) node[right=3pt] {$v^3$};

\draw[shsborder] (-0.5,1) -- (6,1) -- (10.5,5.5);
\filldraw[shs] (-0.5,1) -- (6,1) -- (10.5,5.5) -- (10.5,6.2) -- (5.8,1.5) -- (-0.5,1.5) -- cycle;
\filldraw[sapex] (6,1) circle (3.5pt) node[below right=1.5pt] {$(0,6,1)$};
\end{tikzpicture}	
\end{center}
\caption{The real polyhedral cone generated by $v^1=(0,1,3)$, $v^2=(0,4,1)$ and $v^3=(0,9,4)$ (gray), together with a tropical half-space with apex $(0,6,1)$ (green).}\label{FigTropicalCone}
\end{figure}
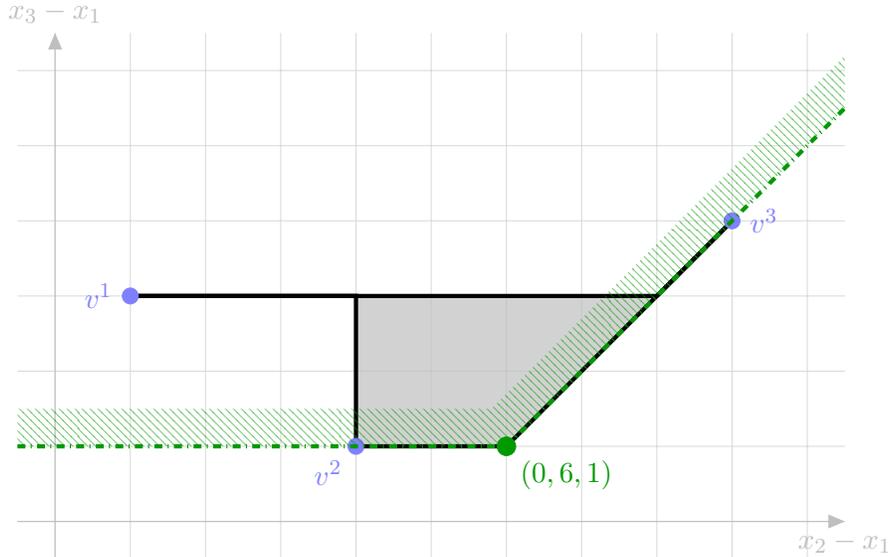  

As in classical convexity, any tropical polyhedral cone admits an ``external'' representation, as the intersection of finitely many tropical 
half-spaces~\cite{GK09}. A \myemph{tropical half-space} can be defined as a set of the form:
\[
\HH = \Bigl\{ x \in \maxplus^n \mid \max_{i \in I} \{x_i - \alpha_i\} \geq \max_{j \in J} \{x_j - \alpha_j\} \Bigr\} \;,
\]
where $I$ and $J$ are non-empty disjoint subsets of $\oneto{n}\mydef \{1, \dots, n\}$ and $\alpha_h \in \R$ for $h \in I \cup J$. 
We say that $\HH$ is \myemph{non-degenerate} when $I \cup J = [n]$. In this case, the vector $(\alpha_1,\ldots ,\alpha_n)$ is an element of $\R^n$, and is called the \myemph{apex} of $\HH$. Note that apices are defined up to a scalar multiple. Consequently, we think of them as elements of~$\projspace$. 
For instance, the tropical half-space $\{ x \in \maxplus^3 \mid  x_3 -1 \geq \max\{x_1, x_2 - 6\} \}$ is non-degenerate, and its apex is the point $(0,6,1)$. It corresponds to the region located above the green dashed half-lines in Figure~\ref{FigTropicalCone}. We warn the reader that, unless explicitly specified, every tropical half-space considered in the sequel will be non-degenerate.

Tropical convex sets, which were introduced by K. Zimmermann~\cite{zimmerman77} when studying discrete optimization problems, have been the topic of many works coming from different fields. 
Tropical cones have been studied in idempotent analysis. It came from an observation of Maslov implying the solutions of a Hamilton-Jacobi equation associated with a deterministic optimal control problem belong to structures similar to convex cones, called semimodules or idempotent linear spaces~\cite{litvinov00,cgq02}. Besides, the invariant spaces that appear in the study of some discrete event systems  
are naturally equipped with structures of 
tropical cones, see~\cite{ccggq99}. This motivated the study of
tropical cones or semimodules by Cohen, 
Gaubert and Quadrat~\cite{cgq00,cgq02}, following the
algebraic approach to discrete event systems initiated
by Cohen, Dubois, Quadrat and Viot~\cite{cohen85a}. 
Another interest in the tropical analogues of convex sets comes from 
abstract convex analysis~\cite{ACA}, see for instance~\cite{cgqs04,NiticaSinger07a}. With the same motivation, the notion of $\B$-convexity (which is another name for tropical convexity) was introduced and studied by Briec, Horvath and Rubinov~\cite{BriecHorvath04,BriecHorvRub05}. 
The theory of tropical convexity has recently been developed
in relation to tropical geometry. Real (tropical) polyhedral cones were considered by Develin and Sturmfels~\cite{DS}. They developed a combinatorial approach, thinking of these tropical cones as polyhedral complexes in the usual sense. 
This was at the origin of several works 
(see for example~\cite{joswig04,blockyu06,DevelinYu}) 
by the same authors and by Joswig, Block and Yu, to quote but a few. 

In classical convex geometry, any full-dimensional polyhedron admits a unique minimal external representation, which is provided by the facet-defining half-spaces. No analogues of facets nor faces are currently known for tropical polyhedra, see the work of Develin and Yu~\cite{DevelinYu}. Minimal (inclusion-wise) tropical half-spaces containing a given tropical polyhedral cone have been studied by Joswig~\cite{joswig04}, Block and Yu~\cite{blockyu06}, and Gaubert and Katz~\cite{GK09}. They can be seen as the tropical counterparts of supporting half-spaces. 
In~\cite{GK09}, the authors show the surprising result that there can exist an infinite number of minimal half-spaces, as soon as $n \geq 4$. In a joint work with Allamigeon~\cite{AGK-10}, they have provided a characterization of the extreme vectors of the polars of tropical polyhedral cones. They pointed out that these vectors do not provide in general minimal representations by half-spaces. They have also introduced a method to eliminate redundant half-spaces via a reduction to a problem in game theory (solving mean payoff games). In particular, it has been observed that the greedy elimination of superfluous half-spaces produces different non-redundant representations, depending on the order in which the half-spaces are considered. As far as we know, the question whether there exists some structure behind the different non-redundant representations of a tropical polyhedral cone has remained open. 

The purpose of this work is to study the 
\myemph{minimal external representations}, 
also called \myemph{non-redundant external representations}, 
of a real polyhedral cone $\CC \subset \projspace$. Without loss of generality, we  restrict our attention to external representations composed of tropical half-spaces 
with apices located in $\CC$. Indeed, as shown in Section~\ref{subsec:saturation}, we can always replace any half-space containing $\CC$ by a smaller half-space (inclusion-wise), whose apex belongs to $\CC$. In particular, the apices of minimal 
half-spaces are elements of $\CC$. 

Under this assumption on the apices, we show that a real polyhedral cone has an essentially unique non-redundant external representation. More precisely, we prove the following theorem.  

\begin{theorem}\label{Th-Main}
For each real polyhedral cone $\CC \subset \projspace$ there exist a subset $\mcA$ of $\CC$, and for each $a\in \mcA$, a collection $\mcC_{a}$ of disjoint sets of tropical half-spaces with apex $a$, satisfying the following property:

A finite set of tropical half-spaces with apices in
$\CC$ is a non-redundant external representation of $\CC$ if, 
and only if, it is composed of precisely one
half-space in each set of the collection $\mcC_{a}$ for each $a\in \mcA$.
\end{theorem}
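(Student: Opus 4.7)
The plan is to build the pair $(\mcA, \{\mcC_a\}_{a \in \mcA})$ from a reference non-redundant external representation, and then verify the characterization via a partial anti-exchange property for half-spaces. By the existence results of Gaubert and Katz~\cite{GK09} combined with the saturation argument of Section~\ref{subsec:saturation}, there is a non-redundant external representation $\SS^\star = \{\HH_1^\star, \dots, \HH_m^\star\}$ of $\CC$ whose apices $a_1, \dots, a_m$ lie in $\CC$. I would take $\mcA \mydef \{a_1, \dots, a_m\}$ and, for each index $i$, set
\[
\mcC^{(i)} \mydef \bigl\{ \HH : \HH \text{ has apex } a_i,\ \CC \subset \HH,\ (\SS^\star \setminus \{\HH_i^\star\}) \cup \{\HH\} \text{ is non-redundant} \bigr\},
\]
and then define $\mcC_a \mydef \{\mcC^{(i)} : a_i = a\}$ for each $a \in \mcA$.

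Two facts then have to be shown: first, that the members of $\mcC_a$ are pairwise disjoint, so that they genuinely partition the exchangeable half-spaces with apex $a$; second, that every non-redundant external representation of $\CC$ with apices in $\CC$ is obtained by choosing exactly one element from each $\mcC^{(i)}$. Both will flow from a \emph{partial anti-exchange property}, which I expect to be the main obstacle. The property I would target is the following: for any non-redundant family $\SS$ externally representing $\CC$ and any $\HH \in \SS$, the collection of half-spaces $\HH'$ such that $(\SS \setminus \{\HH\}) \cup \{\HH'\}$ is a non-redundant external representation of $\CC$ depends only on $\CC$ and $\HH$, not on the other members of $\SS$. In other words, exchangeability is a local, apex-level invariant, and this is precisely what is needed to turn the sets $\mcC^{(i)}$ — a priori tied to the reference $\SS^\star$ — into intrinsic objects.

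To establish the anti-exchange property, I would analyze a half-space $\HH$ with apex $a$ through its ordered partition $(I, J)$ of $[n]$ together with the combinatorial type of $a$ in the cell complex associated with $\CC$. The region that $\HH$ \emph{solely} covers within $\SS$ can, I expect, be described as a local cone at $a$ depending only on $\CC$ and on $(I, J)$; reading this off gives precisely the ordered partitions $(I', J')$ with apex $a$ producing valid replacements. Two indices $i \neq j$ with $a_i = a_j = a$ encode different local ``directions of separation'' from $\CC$ at $a$, which is why no single half-space can lie in both $\mcC^{(i)}$ and $\mcC^{(j)}$; this is the anti-exchange part. This combinatorial analysis is where the bulk of the technical work will lie.

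With the anti-exchange property in hand, the theorem follows by a substitution argument. Given any non-redundant representation $\SS$ of $\CC$ with apices in $\CC$, a sequence of single half-space swaps transforms $\SS$ into $\SS^\star$; each intermediate family remains non-redundant by the anti-exchange property, and the swaps respect the class structure. This forces $|\SS| = |\SS^\star|$ and yields a bijection $\SS \to \SS^\star$ sending each half-space to the element of $\SS^\star$ whose class it belongs to, which proves the ``only if'' direction. The ``if'' direction is symmetric: starting from $\SS^\star$ and exchanging one half-space at a time within each $\mcC^{(i)}$ produces every selection of one half-space per class, and each such family is non-redundant by the anti-exchange property.
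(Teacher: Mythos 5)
Your high-level outline matches the paper's overall architecture — fix a reference non-redundant representation, read off $\mcA$ and the classes $\mcC_a$ from it, and argue intrinsicality via an anti-exchange-type property — but the proposal leaves undone the two results that actually carry the theorem, and the shape you give to the key lemma is off in a way that matters.

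First, the ``partial anti-exchange property'' you target (the set of valid replacements for $\HH$ in $\SS$ depends only on $\CC$ and $\HH$) conflates two distinct phenomena that the paper has to treat with different machinery. The genuine anti-exchange relation (Theorem~\ref{prop:partial-anti-exchange}: if $\HH'\not\in\tau(\Gamma)$, $\HH'\in\tau(\{\HH\}\cup\Gamma)$ and $\apex(\HH)\neq\apex(\HH')$, then $\HH\not\in\tau(\{\HH'\}\cup\Gamma)$) applies \emph{only} to pairs with \emph{distinct} apices. It is used to build a convex geometry $(\mcB,\tau')$ on the boundary of $\CC$ and deduce, via the uniqueness of minimal spanning sets of finite convex geometries, that $\mcA$ is the same for every non-redundant representation (Theorem~\ref{th:non_redundant_apices}). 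The same-apex story is \emph{not} an anti-exchange statement at all: two half-spaces $\HH(a,I)$ and $\HH(a,J)$ may be \emph{mutually} redundant (each implies the other modulo $\Lambda$), and the correct structure is the reachability digraph $\reachgraph(\DD,a)$ on sectors, with exchangeability governed by membership in the same maximal strongly connected component. Showing that those components do not depend on the choice of $\Lambda$ (Proposition~\ref{Prop-CanonicalMaximalSCC} / Corollary~\ref{Coro-Same-MaximalSCC}) is a separate result, and it rests on the anti-exchange theorem together with the hypergraph criterion (Proposition~\ref{prop:local_redundancy_hypergraph_characterization}). You gesture at this as ``a local cone at $a$ depending only on $\CC$ and $(I,J)$'' that one ``reads off'', but that description is not accurate as stated — the region $\HH$ ``solely covers'' inside $\SS$ genuinely depends on the other members of $\SS$ — and the whole technical burden of the proof lives precisely in proving the right intrinsic statement. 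You explicitly defer this (``where the bulk of the technical work will lie''), so the proposal contains no proof.

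Second, the concluding swap argument is not justified. You claim that ``a sequence of single half-space swaps transforms $\SS$ into $\SS^\star$'' and that this ``forces $\lvert\SS\rvert = \lvert\SS^\star\rvert$''. But the existence of such a sequence presupposes that $\SS$ and $\SS^\star$ already have the same apex multiset; if some $\HH\in\SS$ had an apex outside $\mcA$, no single swap could remove it while keeping the family non-redundant, and the cardinality argument would be circular. The paper avoids this by first proving the apex-set invariance (via the convex-geometry route), then handling multiplicities per apex. So the ordering of the claims needs to be reversed: uniqueness of the apex set must be established before any swap-and-count argument can be run.

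Your definition of $\mcC^{(i)}$ is correct as a definition and does coincide with the paper's collections once the theorem is known; the gap is entirely in the justification that those classes are well-defined (intrinsic), pairwise disjoint, and exhaustive, none of which is proved in the proposal.
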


As a consequence of this result, any non-redundant external representation 
(composed of finitely many tropical half-spaces with apices in the cone) 
precisely involves the same set $\mcA$ of apices. They are referred to as \myemph{non-redundant apices}. 
Moreover, 
the multiplicity of each non-redundant apex $a$ 
(\ie , the number of half-spaces with this apex) 
is identical in any non-redundant external representation. It is equal to the cardinality of the collection $\mcC_a$. Theorem~\ref{Th-MaximalSCC} below specifies that the sets in the collection $\mcC_{a}$ are given by some strongly connected components of a certain directed graph (see Section~\ref{subsec:non_redundant_with_same_apex} for details). Consequently, two non-redundant external representations only differ on the choice of the representative of each strongly connected component.

This paper is organized as follows. The next section is devoted to recalling 
basic notions and results concerning tropical convexity. 
Moreover, given an 
external representation of a real polyhedral cone, we present a method to replace its half-spaces by half-spaces with apices in the cone (Proposition~\ref{Prop-Saturation}). Note that this method handles arbitrary half-spaces, including degenerate ones.

In Section~\ref{sec:criterion} we establish a combinatorial criterion to determine whether a half-space $\HH$ is redundant in a given external representation of a tropical polyhedral cone $\CC$. It is expressed as a certain reachability problem in a directed hypergraph, and plays a fundamental role in the subsequent results. It applies to the case where the half-space $\HH$ is non-degenerate, and its apex belongs to $\CC$. In contrast, the half-spaces in the external representation of $\CC$ can be arbitrary.

The main result of this paper, Theorem~\ref{Th-Main} above, 
is proved in Section~\ref{SectionNon-redundantRepre}. Thus, 
in this section we consider only non-degenerate half-spaces containing 
a fixed real polyhedral cone $\CC$, and whose apices belong to $\CC$. The proof consists of several steps. Firstly, we show an anti-exchange result which applies to half-spaces with distinct apices (Theorem~\ref{prop:partial-anti-exchange}). Secondly, we prove that the set of apices arising in non-redundant external representations is always equal to a certain set $\mcA$ (Theorem~\ref{th:non_redundant_apices}). Finally, we fix an apex $a \in \mcA$, and study which half-spaces with apex $a$ appear in non-redundant representations. This leads to the characterization of the collections $\mcC_a$ (Theorem~\ref{Th-MaximalSCC}).

Section~\ref{sec:cell_decomposition} studies the relationship between non-redundant apices and vertices of the cell complex associated with the cone. Theorem~\ref{Theorem-Nonredundant-Distinguised} establishes that all the non-redundant apices belong to 
a particular subset of vertices. We then provide a sufficient condition for a vertex 
in this subset to be a non-redundant apex of the cone (Theorem~\ref{th:sufficient_condition_non_redundant_apices}). Finally, we show (Theorem~\ref{th:generic_extremities}) that this sufficient condition is always satisfied when the cone has ``generic extremities'', meaning that each of its extreme vectors belongs to a closed ball of positive radius (for the tropical projective Hilbert metric) contained in the cone.

\section{Preliminaries}\label{SectionPreliminaries}

\subsection{Basic notions in tropical convexity.}\label{SectionBasicNotions}  

Henceforth, we will use concatenation $xy$ to denote tropical multiplication $x\otimes y$ of two scalars $x,y \in \maxplus$. When $x,y$ are vectors of $\maxplus^n$, 
$x y$ 
represents the tropical inner product of $x$ and $y$, 
\ie 
\[
x y\mydef \mpplus_{i\in \oneto{n}}x_i y_i \; .
\]
To emphasize the semiring structure of $\maxplus$, we denote by $\mpzero$ the neutral element for addition, \ie~$\mpzero \mydef -\infty$, and by $\mpone$ the neutral element for multiplication, \ie~$\mpone \mydef 0$. 
The $i$th (tropical) unit vector will be denoted by $e^i$, 
\ie~$e^i\in \maxplus^n$ is the vector defined by 
\[
e^i_i\mydef \mpone \; \makebox{ and }\; e^i_j\mydef \mpzero \; \makebox{ for }\; j\neq i \;. 
\]
The multiplicative inverse of a non-zero (in the tropical sense) scalar $\lambda \in \maxplus$, \ie~ $-\lambda$, will be represented by $\mpinverse{\lambda}$. When $x \in \projspace$, we denote by $\vectinverse{x}$ the vector whose coordinates are $\mpinverse{x_i}$. 
Given $I\subset \oneto{n}$, the vector $\vectinverse{x}_I$ is defined by 
\[
(\vectinverse{x}_I)_i \mydef \vectinverse{x}_i \; \makebox{ if  } \; i\in I \; \makebox{ and } \; (\vectinverse{x}_I)_i \mydef \mpzero \; \makebox{ otherwise}. 
\]

The identification of a real polyhedral cone with its image in the real projective space $\projspace$ can be generalized to any tropical cone $\CC \subset \maxplus^n$ provided that we consider the \myemph{tropical projective space}
\[
\projmaxspace \mydef \bigl(\maxplus^n \setminus \{ (-\infty, \dots, -\infty)\} \bigr) / (1, \dots, 1)\R \; .  
\]  
We define the \myemph{tropical projective Hilbert metric} over $\projspace$ by:
\[ 
d_H(x,y)\mydef \max_{i \in \oneto{n}} (x_i - y_i) - \min_{i\in \oneto{n}} (x_i - y_i) \; .
\]
It can be extended to $\projmaxspace$ by setting 
\[
d_H(x,y) \mydef
\begin{cases}
\max_{y_i \neq \mpzero} (x_i - y_i) - \min_{y_i \neq \mpzero} (x_i - y_i)& \text{when} \ \{ i \mid x_i \neq \mpzero \} = \{ i \mid y_i \neq \mpzero \}, \\
+\infty & \text{otherwise.}
\end{cases}
\]
The sets $\projspace$ and $\projmaxspace$ are both endowed with the topology induced by the metric $d_H$. In the sequel, closed balls for this metric will be called \myemph{(closed) Hilbert balls}. 

\subsubsection*{Extreme vectors of tropical cones.}

A  (non-zero) vector $x$ of a tropical cone $\CC \subset \projmaxspace$ 
is said to be \myemph{extreme} in $\CC$ if for all $y, z \in \CC$,
\[
x = y \mpplus z \ \text{implies either} \ x=y  \ \text{or}\  x=z  . 
\] 
The tropical version of Minkowski theorem~\cite{GK06a,GK} in the case of cones 
shows that a tropical polyhedral cone $\CC$ is generated 
by a set $V \subset \projmaxspace$ if, and only if, 
$V$ contains the extreme vectors of $\CC$. Thus, 
a tropical polyhedral cone $\CC$ has a unique minimal generating set 
(as a subset of $\projmaxspace$).

\subsubsection*{Tropical half-spaces and hyperplanes.}

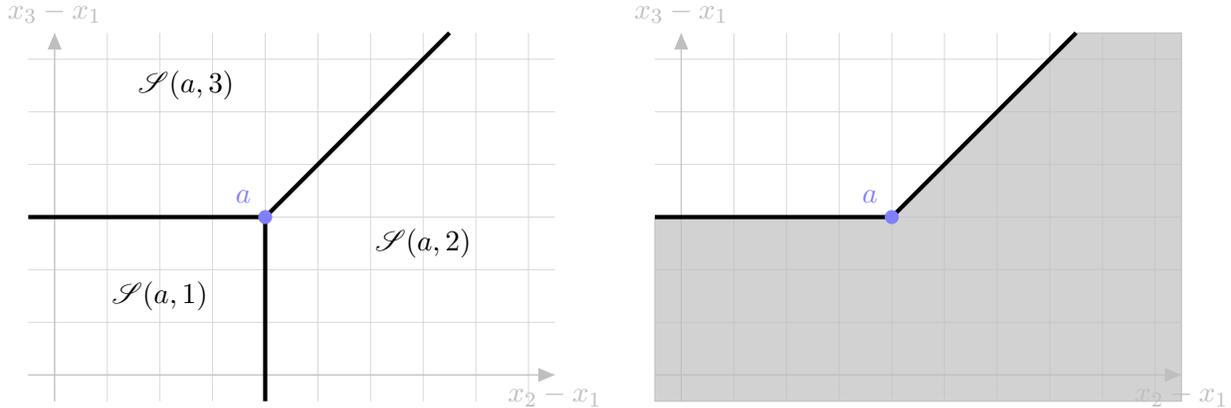
\begin{figure}
\begin{center}
\begin{tikzpicture}[convex/.style={draw=lightgray,fill=lightgray,fill opacity=0.7},convexborder/.style={ultra thick},point/.style={blue!50},>=triangle 45,scale=0.7]
\begin{scope}
\draw[gray!30,very thin] (-0.5,-0.5) grid (9.5,6.5);
\draw[gray!50,->] (-0.5,0) -- (9.5,0) node[color=gray!50,below] {$x_2-x_1$};
\draw[gray!50,->] (0,-0.5) -- (0,6.5) node[color=gray!50,above] {$x_3-x_1$};

\node[coordinate] (a) at (4,3) {};
\node[coordinate] (v1) at (-0.5,3) {};
\node[coordinate] (v2) at (4,-0.5) {};
\node[coordinate] (v3) at (7.5,6.5) {};
\draw[convexborder] (a) -- (v1) (a) -- (v2) (a) -- (v3);
\filldraw[point] (a) circle (3.5pt) node[above left=1.5pt] {$a$};
\node at (2,1.5) {$\SS(a,1)$};
\node at (7,2.5) {$\SS(a,2)$};
\node at (2.5,5.5) {$\SS(a,3)$};
\end{scope}
\begin{scope}[xshift=11.9cm]
\draw[gray!30,very thin] (-0.5,-0.5) grid (9.5,6.5);
\draw[gray!50,->] (-0.5,0) -- (9.5,0) node[color=gray!50,below] {$x_2-x_1$};
\draw[gray!50,->] (0,-0.5) -- (0,6.5) node[color=gray!50,above] {$x_3-x_1$};

\node[coordinate] (a) at (4,3) {};
\node[coordinate] (v1) at (-0.5,3) {};
\node[coordinate] (c1) at (-0.5,-0.5) {};
\node[coordinate] (c2) at (9.5,-0.5) {};
\node[coordinate] (c3) at (9.5,6.5) {};
\node[coordinate] (v3) at (7.5,6.5) {};
\filldraw[convex] (a) -- (v1) -- (c1) -- (c2) -- (c3) -- (v3) -- cycle;	
\draw[convexborder] (v1) -- (a) -- (v3);
\filldraw[point] (a) circle (3.5pt) node[above left=1.5pt] {$a$};
\end{scope}
\end{tikzpicture}
\end{center}
\caption{A tropical hyperplane with the corresponding sectors (left) and a tropical half-space (right), both of them with apex $a=(0,4,3)$.}
\label{fig:halfspace}
\end{figure}

With the notation introduced above, observe that any non-degenerate 
half-space can be written in the form
\begin{equation}
\HH = \{ x \in \projmaxspace \mid \vectinverse{a}_{I} x \geq \vectinverse{a}_{\compl{I}} x \} \; , \label{EqHalfSpace}
\end{equation}
where $a \in \projspace$ and $I$ is a non-empty proper subset of $[n]$. 
In what follows, we shall also shortly denote such half-space by 
$\HH(a,I)$. 

Non-degenerate half-spaces are related to the notion of tropical hyperplanes. The \myemph{(max-plus) tropical hyperplane with apex $a$} is defined as the set of vectors $x \in \projmaxspace$ such that the maximum 
\[
\vectinverse{a} x = \max \{x_1 - a_1, \dots, x_n - a_n\}
\]
is attained at least twice. The complement of such hyperplane is the disjoint union of $n$ regions, 
the topological closure of which 
\[
\SS(a,i)\mydef \left\{x \in \projmaxspace \mid \vectinverse{a_i} x_i \geq  \vectinverse{a_j} x_j \makebox{ for all }
j \in \oneto{n} \right\} \; , 
\]   
are special tropical half-spaces called (closed) \myemph{sectors}, 
see the left-hand side of Figure~\ref{fig:halfspace}. Note that the half-space $\HH$ in~\eqref{EqHalfSpace} coincides with the union, for $i \in I$, of the sectors $\SS(a,i)$ supported by the hyperplane with apex $a$. This is illustrated in the right-hand side of Figure~\ref{fig:halfspace}.
Besides, observe that the apex $a$ and the set of sectors $I$ are both uniquely determined by $\HH$, and so they will be denoted by $\apex(\HH)$ and $\sect(\HH)$ respectively. 
We refer the reader to~\cite{joswig04} for more information on hyperplanes and half-spaces, but we warn that the results of~\cite{joswig04} are in the setting of the (real) min-plus semiring $(\R , \min ,+)$, which is however equivalent to the setting considered here. 

\subsubsection*{Cell decomposition.}

We now recall basic definitions and properties concerning the natural cell decomposition of $\projspace$ induced by a finite set of vectors $\{v^r\}_{r\in \oneto{p}} \subset \projspace$. For a complete presentation in the equivalent setting of the (real) min-plus semiring 
$(\R , \min ,+)$, we refer the reader to~\cite{DS}. 

Given $x \in \projspace$, the \myemph{type} of $x$ relative to $\{v^r\}_{r\in \oneto{p}}$ is the $n$-tuple $\type(x)=(S_1(x),\ldots , S_n(x))$ of subsets of $\oneto{p}$ defined as follows: 
\[
S_j(x) \mydef \left\{r \in \oneto{p} \mid \vectinverse{x}_j v^r_j \geq 
\vectinverse{x}_i v^r_i \text{ for all } i \in \oneto{n}\right\} \; , 
\]
for $j\in \oneto{n}$.  An $n$-tuple $(S_1,\ldots , S_n)$ 
of subsets of $\oneto{p}$ is said to be a \myemph{type} if it arises in this way. 

With each $n$-tuple $S=(S_1,\ldots ,S_n)$ of subsets of $\oneto{p}$, 
it can be associated the set $X_S$ of all the vectors whose type contains $S$, 
\ie 
\[
X_S\mydef \left\{ x\in \projspace \mid S_j\subset S_j(x),\text{ for all } j\in \oneto{n} \right\} \; .
\] 
Lemma~10 of~\cite{DS} shows that these sets are given by  
\[
X_S=\left\{ x\in \projspace \mid x_j v^r_i\leq x_iv^r_j ,\text{ for all } i,j\in \oneto{n} 
\makebox{ and }r\in S_j \right\} \; , 
\] 
and so they are both closed convex polyhedra (in the usual sense) and tropical polyhedral cones. The \myemph{natural cell decomposition of $\projective^{n-1}$ induced 
by $\{v^r\}_{r\in \oneto{p}}$} is defined as the collection of convex polyhedra 
$X_S$, where $S$ ranges over all the possible types. 

A simple geometric construction of the natural cell decomposition of 
$\projspace$ induced by $\{v^r\}_{r\in \oneto{p}}$ can be obtained if we consider the min-plus hyperplanes whose apices are these vectors. 
Recall that given $a \in \projspace$, the \myemph{min-plus hyperplane} with apex $a$ is the set of vectors $x$ such that the minimum   
\[
\min \{x_1 - a_1,\dots,x_n - a_n\}
\] 
is attained at least twice.  
By Proposition~16 of~\cite{DS}, the cell decomposition induced 
by $\{v^r\}_{r\in \oneto{p}}$ is the common refinement of the fans defined by  
the $p$ min-plus hyperplanes whose apices are the vectors $v^r$, 
for $r\in \oneto{p}$. 

Given a cell $X_S$, if we define the undirected graph $G_S$ with set of nodes 
$\oneto{n}$ and an arc connecting nodes $i$ and $j$ 
if, and only if, $S_i\cap S_j\neq \emptyset$, 
then by Proposition~17 of~\cite{DS} the dimension of $X_S$ 
(in the projective space) is one less than 
the number of connected components of $G_S$. 
A zero-dimensional cell is called a \myemph{vertex} 
of the natural cell decomposition. 

When $\CC$ is the tropical cone generated by $\{v^r\}_{r\in \oneto{p}}$, 
the natural cell decomposition of $\projspace$ induced by 
$\{v^r\}_{r\in \oneto{p}}$ has in particular the property that $\CC$ 
is the union of its bounded cells, 
see~\cite{DS} for details. Corollary~12 of~\cite{DS} also shows that a cell $X_S$ is bounded if, and only if, $S_j \neq \emptyset$ for all $j \in [n]$. 
It follows that $x\in \CC$ if, and only if, 
$S_j(x) \neq \emptyset$ for all $j \in [n]$. 

\begin{example}
The natural cell decomposition of 
$\projspace[2]$ induced by $v^1=(0,1,3)$, $v^2=(0,4,1)$ and $v^3=(0,9,4)$ 
is illustrated in Figure~\ref{FigCellDecomp}.  
As explained above, it can be obtained by drawing three 
min-plus hyperplanes (dotted lines in Figure~\ref{FigCellDecomp}) 
whose apices are the vectors $v^1$, $v^2$ and $v^3$. 
This cell decomposition consists of six zero-dimensional cells 
(vertices), fifteen one-dimensional cells 
(nine unbounded and six bounded) 
and ten two-dimensional cells 
(nine unbounded and only one bounded). 

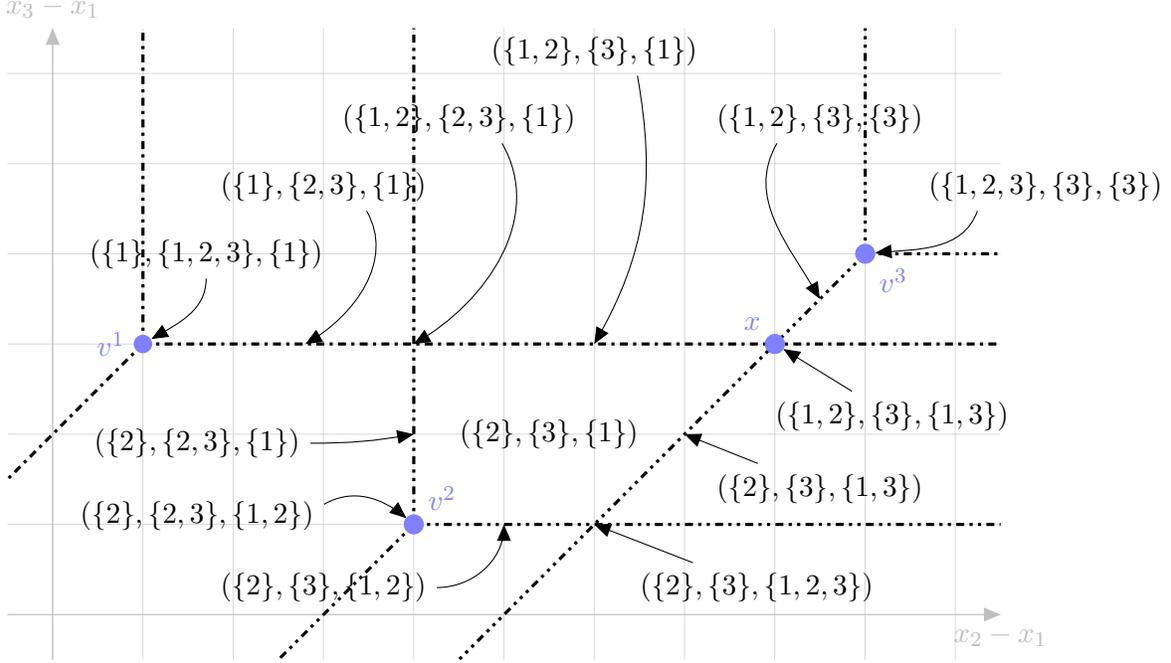
\begin{figure}
\begin{center}
\begin{tikzpicture}	[convex/.style={draw=lightgray,fill=lightgray,fill opacity=0.7},convexborder/.style={very thick},point/.style={blue!50},>=triangle 45,scale=1.2]
\draw[gray!30,very thin] (-0.5,-0.5) grid (10.5,6.5);
\draw[gray!50,->] (-0.5,0) -- (10.5,0) node[color=gray!50,below] {$x_2-x_1$};
\draw[gray!50,->] (0,-0.5) -- (0,6.5) node[color=gray!50,above] {$x_3-x_1$};

\coordinate (v1) at (1,3);
\coordinate (v1_1) at (-0.5,1.5);
\coordinate (v1_2) at (1,6.5);
\coordinate (v1_3) at (10.5,3);

\coordinate (v2) at (4,1);
\coordinate (v2_1) at (2.5,-0.5);
\coordinate (v2_2) at (4,6.5);
\coordinate (v2_3) at (10.5,1);

\coordinate (v3) at (9,4);
\coordinate (v3_1) at (4.5,-0.5);
\coordinate (v3_2) at (9,6.5);
\coordinate (v3_3) at (10.5,4);

\coordinate (x) at (8,3);

\draw[convexborder,dashdotted] (v1) -- (v1_1) (v1) -- (v1_2) (v1) -- (v1_3);
\path[point] (v1) node[left=3pt] {$v^1$};
\node[fill,point,circle,minimum size=7pt,inner sep=0pt] (v1') at (v1) {};

\draw[convexborder,dashdotdotted] (v2) -- (v2_1) (v2) -- (v2_2) (v2) -- (v2_3);
\filldraw[point] (v2) circle (3pt) node[above right=1.5pt] {$v^2$};
\node[fill,point,circle,minimum size=7pt,inner sep=0pt] (v2') at (v2) {};

\draw[convexborder,dashdotdotdotted] (v3) -- (v3_1) (v3) -- (v3_2) (v3) -- (v3_3);
\filldraw[point] (v3) circle (3pt) node[below right=1.5pt] {$v^3$};
\node[fill,point,circle,minimum size=7pt,inner sep=0pt] (v3') at (v3) {};

\filldraw[point] (x) circle (3pt) node[above left=1.5pt] {$x$};
\node[fill,point,circle,minimum size=7pt,inner sep=0pt] (x') at (x) {};

\node (t1_123_1) at (1.7,4) {$(\{1\},\{1,2,3\},\{1\})$};
\draw[->] (t1_123_1) to[out=-90,in=30] (v1');

\node (t1_23_1) at (3,4.75) {$(\{1\},\{2,3\},\{1\})$};
\draw[->] (t1_23_1.330) to[out=-60,in=30] (2.8,3);

\node (t12_23_1) at (4.5,5.5) {$(\{1,2\},\{2,3\},\{1\})$};
\draw[->] (t12_23_1.330) to[out=-60,in=30] (4,3);

\node (t12_3_1) at (6,6.25) {$(\{1,2\},\{3\},\{1\})$};
\draw[->] (t12_3_1.330) to[out=-80,in=60] (6,3);

\node (t12_3_3) at (8.5,5.5) {$(\{1,2\},\{3\},\{3\})$};
\draw[->] (t12_3_3.210) to[out=-120,in=120] (8.5,3.5);

\node (t123_3_3) at (11,4.75) {$(\{1,2,3\},\{3\},\{3\})$};
\draw[->] (t123_3_3.200) to[out=-110,in=5] (v3');

\node (t2_3_1) at (5.5,2) {$(\{2\},\{3\},\{1\})$};

\node (t2_23_1) at (1.6,1.9) {$(\{2\},\{2,3\},\{1\})$};
\draw[->] (t2_23_1) to[out=0,in=-170] (4,2);

\node (t2_23_12) at (1.6,1.1) {$(\{2\},\{2,3\},\{1,2\})$};
\draw[->] (t2_23_12.5) to[out=30,in=140] (v2');

\node (t2_3_12) at (3,0.3) {$(\{2\},\{3\},\{1,2\})$};
\draw[->] (t2_3_12) to[out=0,in=-90] (5,1);

\node (t2_3_123) at (7.8,0.3) {$(\{2\},\{3\},\{1,2,3\})$};
\draw[->] (t2_3_123) -- (6,1);

\node (t2_3_13) at (8.5,1.4) {$(\{2\},\{3\},\{1,3\})$};
\draw[->] (t2_3_13) -- (7,2);

\node (t12_3_13) at (9.3,2.2) {$(\{1,2\},\{3\},\{1,3\})$};
\draw[->] (t12_3_13) -- (x');
\end{tikzpicture}
\end{center}
\caption{The natural cell decomposition of 
$\projspace[2]$ induced by $v^1=(0,1,3)$, $v^2=(0,4,1)$ and $v^3=(0,9,4)$.}
\label{FigCellDecomp}
\end{figure}

Figure~\ref{FigCellDecomp} also provides the type 
(relative to $v^1$, $v^2$ and $v^3$) of 
any vector in the relative interior of each bounded cell. For instance, 
the type of $x=(0,8,3)$ is $(\{1,2\},\{ 3 \},\{ 1,3\})$, 
and so this vector is a vertex (the undirected graph 
$G_S$, where $S=\type(x)$, is connected). 
The line segment joining 
$x$ with $v^3$ is the cell $X_S$ for 
$S=(\{1,2\},\{ 3 \},\{3\})$, and the only bounded 
two-dimensional cell is $X_S$ for 
$S=(\{2\},\{ 3 \},\{1\})$. 

Comparing Figures~\ref{FigTropicalCone} and~\ref{FigCellDecomp}, it can be seen that the tropical cone generated by $v^1$, $v^2$ and $v^3$ is precisely the union of the bounded cells in the natural cell decomposition of $\projspace[2]$ induced by these vectors. 
\end{example}

\subsection{Tropical polar cones.}

As in classical convex analysis, the \myemph{polar} $\polar{\CC}$ of a tropical cone $\CC \subset \projmaxspace$ can be defined~\cite{katz08} to represent the set of all (tropical) linear inequalities satisfied by the vectors of $\CC$:  
\begin{equation}
\polar{\CC}\mydef \left\{ (u,u') \in \projmax^{2n-1} \mid u x \geq u' x ,\; \forall x\in \CC \right\} \; .  
\end{equation}
However, note that tropical linear forms must be considered on both sides 
of the inequality due to the absence of a ``minus sign''. 
This means that the polar of $\CC$ is a tropical cone of $\projmax^{2n-1}$. 

As a consequence of the separation theorem for tropical cones 
of~\cite{zimmerman77,shpiz,cgqs04}, 
a tropical polyhedral cone $\CC$ is characterized by its polar cone, 
\ie
\[
\CC =\left\{x\in \projmaxspace \mid u x \geq u' x\; , \; \forall (u,u')\in \polar{\CC} \right\}  \; .
\]
Moreover, when $\CC$ is polyhedral, 
it can be shown that $\polar{\CC}$ is also polyhedral,  
implying $\CC$ is the intersection of the (finite) set of 
tropical half-spaces associated with the extreme vectors of $\polar{\CC}$. 

An equivalent notion to the polar is that of the $j$th polar, 
see~\cite{AGK-10}. For $j\in \oneto{n}$, the \myemph{$j$th polar} $\ipolar[j]{\CC}$ 
of $\CC$ is defined as the tropical cone
\begin{equation} 
\ipolar[j]{\CC} \mydef \left\{ u \in \projmaxspace \mid \oplus_{i\in\compl{\{j\}}} u_i x_i \geq u_j x_j \; ,\; \forall x \in \CC \right\} \; , 
\end{equation}
which lies in $\projmaxspace$. As in the case of the polar, 
a tropical polyhedral cone $\CC$ is given by the (finite) intersection of the 
tropical half-spaces associated with the extreme vectors of $\ipolar[j]{\CC}$, 
for $j \in \oneto{n}$. Indeed, the set of extreme vectors of $\polar{\CC}$ precisely consists of the vectors $(e^i, e^i)$ ($i \in [n]$) and the extreme vectors of the $j$th polars of $\CC$ ($j \in [n]$), see~\cite{AGK-10}.

The extreme vectors of the polars of tropical polyhedral cones 
have been characterized in different ways, 
see~
\cite[Theorem~5]{GK09} 
or~\cite[Theorem~3]{AGK-10}. We shall need the following 
variant of Theorem~3 of~\cite{AGK-10}, 
which is more adapted to our setting.   

\begin{theorem}\label{TheoCharacExtremeIPolar}
Let $\CC$ be a real polyhedral cone generated by the set $\{v^r\}_{r \in [p]} \subset \projspace$, and let $u\in \ipolar[j]{\CC}$ be such that $u_j\neq \mpzero$. 

Then, $u$ is extreme in $\ipolar[j]{\CC}$ if, and only if, 
for each $i \neq j$ either $u_i =\mpzero$ or there exists $r \in \oneto{p}$ such that $u_i v_i^r = u_j v^r_j > \oplus_{k \in \compl{\{i, j \}}} u_k v^r_k$.
\end{theorem}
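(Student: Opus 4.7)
The plan is to reduce membership in $\ipolar[j]{\CC}$ to finitely many inequalities at the generators: since $\CC$ is the tropical cone spanned by $\{v^r\}_{r\in \oneto{p}}$, monotonicity of tropical linear forms gives $u \in \ipolar[j]{\CC}$ if, and only if, $\bigoplus_{k \neq j} u_k v^r_k \geq u_j v^r_j$ for every $r \in \oneto{p}$. Working with representatives in $\maxplus^n$ and normalizing so that $u_j = \mpone$, both directions then reduce to local manipulations of these $p$ scalar inequalities.

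For the ``if'' direction I would assume the witness condition holds and suppose $u = v \oplus w$ with $v, w \in \ipolar[j]{\CC}$, picking representatives so that $v, w \leq u$ componentwise. Since $\max(v_j, w_j) = u_j$, without loss of generality $v_j = u_j$. Fix $i \neq j$ with $u_i \neq \mpzero$ and let $r = r(i)$ be the corresponding witness index. The polar inequality for $v$ at $r$ reads $\bigoplus_{k \neq j} v_k v^r_k \geq v^r_j = u_j v^r_j$. The hypothesis $u_k v^r_k < u_j v^r_j$ for $k \in \compl{\{i,j\}}$, combined with $v_k \leq u_k$, forces the maximum on the left-hand side to be attained at $k = i$, so $v_i v^r_i \geq u_i v^r_i$, and hence $v_i = u_i$. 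Iterating over all such $i$ and handling the trivial case $u_i = \mpzero$ gives $v = u$, so $u$ is extreme.

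For the ``only if'' direction I argue by contrapositive. Suppose some $i \neq j$ with $u_i \neq \mpzero$ admits no witness, so that for every $r \in \oneto{p}$ either $u_i v^r_i \neq u_j v^r_j$, or $u_j v^r_j \leq \bigoplus_{k \in \compl{\{i,j\}}} u_k v^r_k$. Define $u'$ by decreasing the $i$th coordinate of $u$ by a small $\epsilon > 0$, leaving the other coordinates unchanged. A brief case analysis shows $u' \in \ipolar[j]{\CC}$ for $\epsilon$ small enough: the $r$th inequality is either automatic, because $u_j v^r_j$ is already dominated by $\bigoplus_{k \in \compl{\{i,j\}}} u_k v^r_k$, or the failure of the witness forces the strict inequality $u_i v^r_i > u_j v^r_j$, which tolerates a small perturbation. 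Letting $w$ be the vector with $w_i = u_i$ and $w_k = \mpzero$ otherwise, one has $w \in \ipolar[j]{\CC}$ trivially (since $w_j = \mpzero$) and $u = u' \oplus w$, but neither $u'$ nor $w$ is a scalar multiple of $u$. Hence $u$ is not extreme.

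The main obstacle is the uniform choice of $\epsilon$ in the necessity direction; it relies on the finiteness of $\{v^r\}_{r\in \oneto{p}}$ and on the observation that a non-trivial failure of the witness condition forces the strict inequality $u_i v^r_i > u_j v^r_j$ (since $u \in \ipolar[j]{\CC}$ rules out $u_i v^r_i < u_j v^r_j$ in that case). Once this is established, the decomposition $u = u' \oplus w$ is routine bookkeeping.
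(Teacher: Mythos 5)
Your proof is correct and self-contained. Note that the paper does not actually prove Theorem~\ref{TheoCharacExtremeIPolar}: it is presented as a variant of Theorem~3 of~\cite{AGK-10} and cited without proof, so there is no in-paper argument to compare against. Your reduction of membership in $\ipolar[j]{\CC}$ to the $p$ generator inequalities $\bigoplus_{k\neq j} u_k v^r_k \geq u_j v^r_j$ is the natural route. In the sufficiency direction, the reasoning that strict domination $u_j v^r_j > \bigoplus_{k \in \compl{\{i,j\}}} u_k v^r_k$ together with $v \leq u$ forces the maximum $\bigoplus_{k\neq j} v_k v^r_k$ to be attained at $k=i$, yielding $v_i v^r_i \geq u_i v^r_i$ and hence $v_i = u_i$ (cancellation is licit because $v^r_i \in \R$, $\CC$ being a real cone), is sound. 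In the necessity direction, the decomposition $u = u' \oplus w$ with $w = u_i e^i$ is a standard device; the key point you correctly isolate is that when the witness fails at $r$ while $u_j v^r_j > \bigoplus_{k \in \compl{\{i,j\}}} u_k v^r_k$, membership $u \in \ipolar[j]{\CC}$ forces $u_i v^r_i \geq u_j v^r_j$ and the failed witness upgrades this to a strict gap, so a uniform $\epsilon>0$ exists by finiteness of $\oneto{p}$. Since $u'_j = u_j$ but $u'_i < u_i$, and $w_j = \mpzero \neq u_j$, both summands are projectively distinct from $u$, establishing non-extremality.
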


Observe that the case $u_j=\mpzero$ is not considered in 
Theorem~\ref{TheoCharacExtremeIPolar}. This is due to the fact that 
$\ipolar[j]{\CC}$ contains the unit vectors $e^i$, for $i\neq j$, 
and so they are the only extreme vectors $u$ of $\ipolar[j]{\CC}$ satisfying 
$u_j=\mpzero$. These extreme vectors of $\ipolar[j]{\CC}$ will be called \myemph{trivial}, 
because they represent tautological inequalities $x_i \geq \mpzero$, and so they play no role in the external representation of $\CC$. 

\subsection{Saturation and minimal half-spaces.}\label{subsec:saturation}

Let $\CC$ be the real polyhedral cone generated by the set $\{v^r\}_{r \in [p]} \subset \projspace$. A half-space is said to be \myemph{minimal} with respect to $\CC$ if it is minimal for inclusion among the set of half-spaces containing $\CC$. Gaubert and Katz have proved in~\cite{GK09} that any minimal half-space with respect to $\CC$ is non-degenerate, and its apex can be characterized in terms of the natural cell decomposition of $\projspace$ induced by the generating set $\{v^r\}_{r \in [p]}$. 

\begin{theorem}[\cite{GK09}, Theorem~4]\label{Theo-Charac-Minimal}
The half-space $\HH(a,I)$ is minimal with respect to the real polyhedral cone $\CC$ if, and only if, the following conditions are satisfied:
\begin{enumerate}[label={\normalfont (C\arabic*)}]
\item\label{item:C1} $\cup_{i \in I} S_i(a) = \oneto{p}$,
\item\label{item:C2} for each $j \in \compl{I}$ there exists $i \in I$ such that $S_i(a) \cap S_j(a) \neq \emptyset$,
\item\label{item:C3minimal} for each $i \in I$ there exists $j \in \compl{I}$ such that $S_i(a) \cap S_j(a) \not \subset \cup_{k \in I \setminus \left\{ i \right\}} S_k(a)$.
\end{enumerate}
where $(S_1(a),\dots,S_n(a))=\type(a)$ is the type of $a$ relative to 
the generating set $\{v^r\}_{r \in [p]}$. 
\end{theorem}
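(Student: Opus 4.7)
My plan is to separate the pure containment condition $\CC\subset\HH(a,I)$ from minimality itself, and then to analyse minimality through the two elementary ways a half-space can be shrunk while still containing $\CC$: deforming its apex along a single coordinate direction, and removing one of its sectors. The preliminary observation is that $\HH(a,I)=\bigcup_{i\in I}\SS(a,i)$ is a tropical cone, so $\CC\subset\HH(a,I)$ iff every generator $v^r$ lies in some $\SS(a,i)$ with $i\in I$; since $v^r\in\SS(a,j)$ is equivalent to $r\in S_j(a)$, this is precisely \ref{item:C1}. Thus \ref{item:C1} is unconditionally necessary, and I assume it throughout the rest of the argument.

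For the direction ``minimal implies \ref{item:C2} and \ref{item:C3minimal}'' I proceed by contrapositive, exhibiting a strictly smaller half-space that still contains $\CC$ when either condition fails. If \ref{item:C2} fails at $j\in\compl{I}$, then \ref{item:C1} forces $S_j(a)=\emptyset$, so no generator lies in the sector $\SS(a,j)$. Decreasing $a_j$ by a small $\epsilon>0$ enlarges $\SS(a,j)$ and shrinks every $\SS(a,i)$ with $i\neq j$; but the strict inequalities $v^r_i-a_i>v^r_j-a_j$ (which hold for every $r\in S_i(a)$ with $i\in I$) persist for small $\epsilon$, so $\CC$ is still covered and $\HH(a^{\epsilon},I)\subsetneq\HH(a,I)$. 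Dually, if \ref{item:C3minimal} fails at $i_0\in I$, then any generator in $S_{i_0}(a)\setminus\bigcup_{k\in I\setminus\{i_0\}}S_k(a)$ avoids every $S_j(a)$ with $j\in\compl{I}$ and thus sits in the relative interior of $\SS(a,i_0)$. Raising $a_{i_0}$ by a small $\epsilon>0$ shrinks $\SS(a,i_0)$ but preserves coverage of each generator: the exclusive ones remain inside by the interior property, while the others were already in $\SS(a,k)$ for some $k\in I\setminus\{i_0\}$ and those sectors grow. Once again $\HH(a^{\epsilon},I)\subsetneq\HH(a,I)$ still contains $\CC$, contradicting minimality.

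For the converse I assume \ref{item:C1}--\ref{item:C3minimal} and take, for contradiction, $\HH(b,J)\subsetneq\HH(a,I)$ with $\CC\subset\HH(b,J)$. The main obstacle is a clean combinatorial description of the inclusion of two tropical half-spaces: the apex $b$ may differ from $a$ in several coordinates simultaneously and $J$ need not be comparable with $I$. My plan is to decompose an arbitrary strict inclusion into a finite sequence of the two elementary shrinking moves above, and to verify that each such move is blocked under the hypotheses: any downward perturbation of $a_j$ with $j\in\compl{I}$ is obstructed by a generator in $S_j(a)\cap S_i(a)$ guaranteed by \ref{item:C2}, and any attempt to drop $i_0\in I$ is obstructed by a generator of $S_{i_0}(a)\cap S_j(a)\setminus\bigcup_{k\in I\setminus\{i_0\}}S_k(a)$ guaranteed by \ref{item:C3minimal}. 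Making this decomposition rigorous is the technical heart of the proof; a natural route is to reduce, via Theorem~\ref{TheoCharacExtremeIPolar}, to the case where $\HH(b,J)$ corresponds to a non-trivial extreme vector of some $\ipolar[j]{\CC}$ and to argue directly at that level.
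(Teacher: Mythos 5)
This theorem is cited from~\cite{GK09} and not reproved in the present paper, so there is no in-paper proof to compare against; I'll assess your argument on its own terms.

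Your preliminary reduction and the forward direction are sound. The observation that, because $\HH(a,I)=\bigcup_{i\in I}\SS(a,i)$ is a tropical cone, containment $\CC\subset\HH(a,I)$ is equivalent to covering the generators, which is exactly~\ref{item:C1}, is correct. The two $\epsilon$-perturbation constructions then properly certify non-minimality when~\ref{item:C2} or~\ref{item:C3minimal} fails: in the first case, $S_j(a)=\emptyset$ yields strict slack at every generator in the $j$-th coordinate, so pushing $a_j$ down a little keeps $\CC$ inside while $a$ itself falls out of $\HH(a^{\epsilon},I)$; in the second case, the exclusive generators lie in the interior of $\SS(a,i_0)$ and the others live in some $\SS(a,k)$ with $k\in I\setminus\{i_0\}$, which only grows under the perturbation, and again the apex $a$ witnesses strictness. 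Both arguments hold up.

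The converse, however, is not a proof but a plan, and the plan itself has a concrete problem. You propose to decompose an arbitrary strict inclusion $\HH(b,J)\subsetneq\HH(a,I)$ (with $\CC$ in both) into a finite chain of the two elementary moves, \emph{each intermediate half-space still containing} $\CC$, and then argue that the first move already contradicts~\ref{item:C2} or~\ref{item:C3minimal}. Nothing in what you wrote shows such a $\CC$-respecting chain exists: $b$ may differ from $a$ in several coordinates simultaneously and $J$ need not equal $I$, so the obvious single-coordinate intermediate apices have no reason to keep $\CC$ covered. You acknowledge this is ``the technical heart,'' but without it there is no converse. The fallback you suggest --- reducing to the case where the smaller half-space comes from a non-trivial extreme vector of some $j$-th polar via Theorem~\ref{TheoCharacExtremeIPolar} --- is also unjustified and, in light of Proposition~\ref{Prop-Equiv-Distinguished-MinimalH}, actually dubious: the half-spaces obtained by saturating extreme polar vectors are exactly those with $(I,j)$-vertex apices, a strictly smaller class than minimal half-spaces in general (condition~\eqref{eq:C4} demands a single $j$ working for every $i\in I$, whereas~\ref{item:C3minimal} allows $j$ to depend on $i$). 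So even if you could pass to a smaller half-space of that special form, it is not clear how to conclude about the original $\HH(b,J)$. What is missing is a genuine characterization of when one non-degenerate half-space contains another while both contain $\CC$ --- for instance establishing $J\subset I$ and then comparing the coordinates of $a$ and $b$ sector by sector against the generators supplied by~\ref{item:C2} and~\ref{item:C3minimal}; as it stands the argument does not get there.
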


The apices of minimal half-spaces consequently form certain cells of the 
natural cell decomposition of $\projspace$ induced by the generators of $\CC$. 
It was shown in~\cite{GK09} that these cells need not be zero-dimensional, so the number of apices of minimal half-spaces can be infinite. Since Conditions~\ref{item:C2} and~\ref{item:C3minimal} above imply  $S_h(a) \neq \emptyset$ for all $h \in [n]$, we readily obtain the following corollary: 

\begin{corollary}\label{Coro-Minimal-Apex}
If $\HH$ is a minimal half-space with respect to the real polyhedral cone $\CC$, then its apex belongs to $\CC$.
\end{corollary}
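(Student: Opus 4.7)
My plan is to derive the corollary essentially as a bookkeeping consequence of Theorem~\ref{Theo-Charac-Minimal} combined with the characterization of $\CC$ in terms of types recalled at the end of the cell decomposition subsection, namely that $x \in \CC$ if and only if $S_j(x) \neq \emptyset$ for every $j \in [n]$. So it suffices to show that if $\HH(a,I)$ is minimal with respect to $\CC$, then $S_h(a) \neq \emptyset$ for all $h \in [n]$.

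The argument splits according to whether $h \in I$ or $h \in \compl{I}$. For $h \in \compl{I}$, Condition~\ref{item:C2} of Theorem~\ref{Theo-Charac-Minimal} supplies some $i \in I$ with $S_i(a) \cap S_h(a) \neq \emptyset$, which in particular forces $S_h(a) \neq \emptyset$. For $h \in I$, Condition~\ref{item:C3minimal} supplies some $j \in \compl{I}$ such that $S_h(a) \cap S_j(a)$ is not contained in $\cup_{k \in I \setminus \{h\}} S_k(a)$; in particular $S_h(a) \cap S_j(a)$ is non-empty, so $S_h(a) \neq \emptyset$.

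Combining both cases yields $S_h(a) \neq \emptyset$ for all $h \in [n]$, whence $a \in \CC$ by the cell decomposition characterization cited above. There is essentially no obstacle here: the content is already contained in Theorem~\ref{Theo-Charac-Minimal}, and the only delicate point is remembering that \ref{item:C3minimal} implies non-emptiness of the intersection $S_h(a) \cap S_j(a)$ (because a set that fails to be contained in another must be non-empty), and hence of $S_h(a)$ itself.
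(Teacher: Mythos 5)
Your proof is correct and matches the paper's own (implicit) argument exactly: the paper observes right before the corollary that Conditions~\ref{item:C2} and~\ref{item:C3minimal} force $S_h(a) \neq \emptyset$ for all $h$, and then invokes the characterization $a \in \CC \iff S_h(a) \neq \emptyset$ for all $h \in [n]$. You have simply unfolded the case split ($h \in I$ versus $h \in \compl{I}$) that the paper leaves to the reader.
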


\begin{remark}\label{remark:minimal}
The three conditions of Theorem~\ref{Theo-Charac-Minimal} do not depend on the choice of the generating set of $\CC$. For instance, Condition~\ref{item:C1} amounts to $\CC \subset \HH(a,I)$. Similarly, assuming $\CC \subset \HH(a,I)$, Condition~\ref{item:C2} is equivalent to the fact that, for each $j \in \compl{I}$, there exists $x \in \CC$ such that $\vectinverse{a_j} x_j = \mpplus_{k \in I } \vectinverse{a_k} x_k$. Observe that the latter is trivially satisfied when $a \in \CC$. 
\end{remark}

Given a (possibly degenerate) half-space $\HH$ containing $\CC$, there always exists a minimal half-space $\HH'$ such that $\CC \subset \HH' \subset \HH$, see~\cite[Theorem~3]{GK09}. 
Using Corollary~\ref{Coro-Minimal-Apex} and the fact that $\CC$ is a finite intersection of tropical half-spaces by (the conic form) of the tropical Minkowski-Weyl theorem~\cite{GK09}, we conclude that $\CC$ is a finite intersection of half-spaces with apices in $\CC$, and these half-spaces can be assumed to be minimal. 

Since Theorem~3 of~\cite{GK09} is not constructive, in this section we explain a simple method, referred to as \myemph{saturation}, to compute a half-space $\HH'$ satisfying $\apex(\HH')\in \CC$ and $\CC \subset \HH' \subset \HH$.
Suppose that $\HH = \{ x \in \projmaxspace \mid \mpplus_{i \in I} \mpinverse{\alpha_i} x_i \geq \mpplus_{j \in J} \mpinverse{\alpha_j} x_j\}$, where $I$ and $J$ are disjoint non-empty subsets of $\oneto{n}$ and $\alpha_h \in \R$ for all $h \in I\cup J$. Consider the half-space $\HH(b,I')$ whose apex $b = (\beta_1,\ldots ,\beta_n) \in \projspace$ and sectors $I'$ are defined as follows: 
\[
\beta_i \mydef \mpplus_{r \in \oneto{p}} \lambda_r v^r_i \quad \text{for all } i \in [n], \qquad I' \mydef \{ i \in I \mid \alpha_i = \beta_i\} \; ,
\]
with $\lambda_r \in \R$ being defined by
$\lambda_r \mydef \mpinverse{(\mpplus_{h \in I \cup J} \mpinverse{\alpha_h} v^r_h)}$. 
Then, the following proposition holds:
\begin{proposition}\label{Prop-Saturation}
The half-space $\HH(b,I')$ satisfies the following properties:
\begin{enumerate}[label=(\roman*)]
\item its apex $b$ belongs to $\CC$;
\item $\CC \subset \HH(b,I') \subset \HH$.
\end{enumerate}
\end{proposition}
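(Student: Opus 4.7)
The plan hinges on a single observation: each scalar $\lambda_r$ is defined precisely so that the tropical rescaling $\lambda_r v^r$ sits on the ``tropical hyperplane'' $\mpplus_{h \in I \cup J} \mpinverse{\alpha_h} x_h = \mpone$, i.e., $\max_{h \in I \cup J}(\lambda_r + v^r_h - \alpha_h) = 0$. Since $v^r \in \CC \subset \HH$, this maximum must be attained at some index $i^\star(r) \in I$: the defining inequality of $\HH$ forces the maximum over $I \cup J$ to be reached inside $I$. This single fact controls everything below.

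For assertion (i), every $\alpha_h$ and $v^r_h$ is real, so each $\lambda_r$ is a finite real number and $b = \mpplus_r \lambda_r v^r$ is a tropical combination of the generators with real coefficients. Hence $b \in \R^n$ and $b \in \CC$.

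For assertion (ii), I would first check that $\HH(b, I')$ is a well-defined non-degenerate half-space. Properness $I' \subsetneq [n]$ follows from $I' \subset I$ and $J \neq \emptyset$. To see $I' \neq \emptyset$, I unfold the definitions: for each $r$, $\lambda_r + v^r_{i^\star(r)} = \alpha_{i^\star(r)}$, while for every $r'$ one has $\lambda_{r'} + v^{r'}_{h} \leq \alpha_{h}$ for all $h \in I \cup J$. Taking the maximum over $r'$ gives $\beta_{i^\star(r)} = \alpha_{i^\star(r)}$ and, more generally, $\beta_h \leq \alpha_h$ for every $h \in I \cup J$; in particular $i^\star(r) \in I'$. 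The inclusion $\HH(b, I') \subset \HH$ is then immediate from the chain
\begin{equation*}
\max_{j \in J}(x_j - \alpha_j) \leq \max_{j \in J}(x_j - \beta_j) \leq \max_{k \in \compl{I'}}(x_k - \beta_k) \leq \max_{i \in I'}(x_i - \beta_i) = \max_{i \in I'}(x_i - \alpha_i) \leq \max_{i \in I}(x_i - \alpha_i),
\end{equation*}
valid for any $x \in \HH(b, I')$ since $J \subset \compl{I'}$ and $\alpha_i = \beta_i$ for $i \in I'$.

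For the reverse inclusion $\CC \subset \HH(b, I')$, since $\HH(b, I')$ is a tropical cone it suffices to check each generator $v^r$. By construction $v^r_k - \beta_k \leq -\lambda_r$ for every $k \in [n]$, while $v^r_{i^\star(r)} - \beta_{i^\star(r)} = -\lambda_r$ with $i^\star(r) \in I'$. Hence $\max_{i \in I'}(v^r_i - \beta_i) = -\lambda_r \geq \max_{k \in \compl{I'}}(v^r_k - \beta_k)$, so $v^r \in \HH(b, I')$. I do not anticipate a serious obstacle; the only substantive point is locating the touching index $i^\star(r)$ in $I$ (rather than $J$), which is forced by $v^r \in \HH$. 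The rest is a careful unpacking of the tropical arithmetic defining $\lambda_r$, $b$, and $I'$.
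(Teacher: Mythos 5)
Your proof is correct and follows essentially the same route as the paper: both establish $b \in \CC$ directly from $b = \mpplus_r \lambda_r v^r$, then use $\beta_h \leq \alpha_h$ on $I \cup J$ (with equality at a ``touching'' index in $I$, forced by $v^r \in \HH$) to derive $\CC \subset \HH(b,I')$, and conclude $\HH(b,I') \subset \HH$ by the same chain of inequalities. The only cosmetic difference is that you name the touching index $i^\star(r)$ explicitly and separately verify $I' \neq \emptyset$, which the paper leaves implicit; otherwise the two arguments are step-for-step the same.
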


\begin{proof}
The first property readily follows from $b = \mpplus_{r \in \oneto{p}} \lambda_r v^r$.

On the other hand, since 
$\mpinverse{\alpha_k} v_k^r \leq \mpplus_{h \in I\cup J} \mpinverse{\alpha_h} v^r_h = 
\mpinverse{\lambda_r}$ for all $k \in I\cup J$ and $r\in \oneto{p}$, it follows that  
$\beta_k = \mpplus_{r \in \oneto{p}} \lambda_r v_k^r \leq \alpha_k$ for all $k\in I\cup J$. 
Moreover, note that $\alpha_k = \beta_k$ if, and only if, there exists $r \in \oneto{p}$ such that $\mpinverse{\alpha_k} v^r_k = \mpplus_{h \in I\cup J} \mpinverse{\alpha_h} v^r_h$. 

Consider now any $s\in \oneto{p}$. Firstly, observe that there exists $i \in I$ such that 
$\mpinverse{\alpha_i} v^s_i = \mpplus_{h \in I\cup J} \mpinverse{\alpha_h} v^s_h$, 
because $v^s \in \CC \subset \HH$. Since in that case we have $\alpha_i=\beta_i$ by the discussion above, it follows that $i \in I'$ and so 
\[
\mpplus_{h\in I\cup J} \mpinverse{\alpha_h} v^s_h = 
\mpplus_{i \in I'} \mpinverse{\alpha_i} v^s_i \; .
\]
Now note that for any $j \in \compl{I'}$ we have
\[
\mpinverse{\beta_j} v^s_j = \mpinverse{(\mpplus_{r \in \oneto{p}} \lambda_r v_j^r)} v^s_j  
\leq \mpinverse{\lambda_s}  = \mpplus_{h\in I\cup J} \mpinverse{\alpha_h} v^s_h = 
\mpplus_{i\in I'} \mpinverse{\alpha_i} v^s_i = \mpplus_{i \in I'} \mpinverse{\beta_i} v^s_i \; ,
\]
and thus $v^s\in \HH(b,I')$. Since this holds for any $s\in \oneto{p}$, 
we conclude that $\CC \subset \HH(b,I')$.

Finally, if we assume $\mpplus_{i \in I'} \mpinverse{\beta_i} x_i \geq \mpplus_{j \in \compl{I'}} \mpinverse{\beta_j} x_j$, it follows that   
\[
\mpplus_{i \in I} \mpinverse{\alpha_i} x_i \geq 
\mpplus_{i \in I'} \mpinverse{\alpha_i} x_i =
\mpplus_{i \in I'} \mpinverse{\beta_i} x_i \geq 
\mpplus_{j \in \compl{I'}} \mpinverse{\beta_j} x_j \geq 
\mpplus_{j \in J} \mpinverse{\beta_j} x_j \geq
\mpplus_{j \in J} \mpinverse{\alpha_j} x_j \; ,
\]
because $I'\subset I$, $J \subset \compl{I'}$ and 
$\beta_h \leq \alpha_h$ for all $h \in I\cup J$, where the equality holds for $h \in I'$. Then, we conclude that $\HH(b,I') \subset \HH$. 
\end{proof}

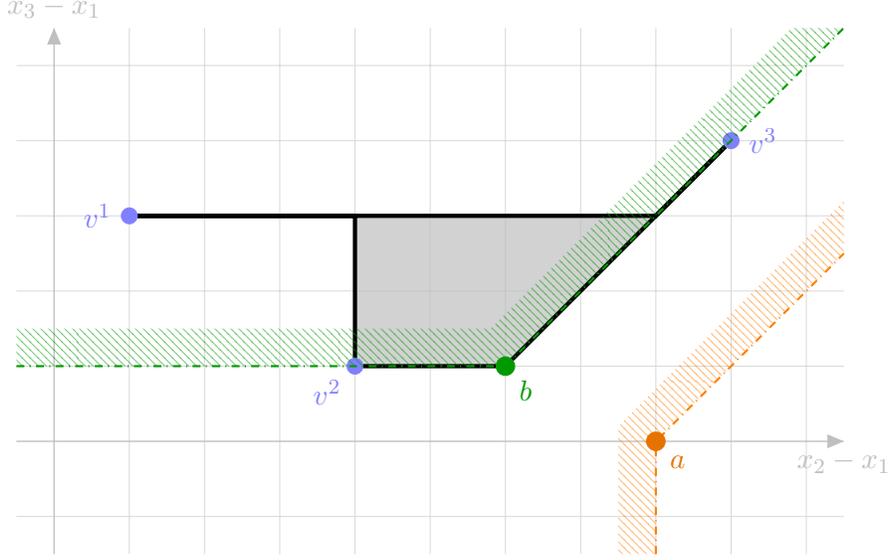
\begin{figure}
\begin{center}
\begin{tikzpicture}[convex/.style={draw=black,ultra thick,fill=lightgray,fill opacity=0.7},
point/.style={blue!50},>=triangle 45,hsborder/.style={orange,dashdotted,thick},
hs/.style={draw=none,pattern=north west lines,pattern color=orange,fill opacity=0.5},
apex/.style={orange!90!black},
shsborder/.style={green!60!black,dashdotted,thick},
shs/.style={draw=none,pattern=north west lines,pattern color=green!60!black,fill opacity=0.5},
sapex/.style={green!60!black},]
\draw[gray!30,very thin] (-0.5,-1.5) grid (10.5,5.5);
\draw[gray!50,->] (-0.5,0) -- (10.5,0) node[color=gray!50,below] {$x_2-x_1$};
\draw[gray!50,->] (0,-1.5) -- (0,5.5) node[color=gray!50,above] {$x_3-x_1$};

\node[coordinate] (v1) at (1,3) {};
\node[coordinate] (v12) at (4,3) {};
\node[coordinate] (v2) at (4,1) {};
\node[coordinate] (v23) at (6,1) {};
\node[coordinate] (v3) at (9,4) {};
\node[coordinate] (v31) at (8,3) {};

\filldraw[convex] (v1) -- (v12) -- (v2) -- (v23) -- (v3) -- (v31) -- cycle;
\filldraw[point] (v1) circle (3pt) node[left=3pt] {$v^1$};
\filldraw[point] (v2) circle (3pt) node[below left=1.5pt] {$v^2$};
\filldraw[point] (v3) circle (3pt) node[right=3pt] {$v^3$};

\draw[hsborder] (8,-1.5) -- (8,0) -- (10.5,2.5);
\filldraw[hs] (8,-1.5) -- (8,0) -- (10.5,2.5) -- (10.5,3.2) -- (7.5,0.2) -- (7.5,-1.5) -- cycle;
\filldraw[apex] (8,0) circle (3.5pt) node[below right=1.5pt] {$a$};

\draw[shsborder] (-0.5,1) -- (6,1) -- (10.5,5.5);
\filldraw[shs] (-0.5,1) -- (6,1) -- (10.5,5.5) -- (9.8,5.5) -- (5.8,1.5) -- (-0.5,1.5) -- cycle;
\filldraw[sapex] (6,1) circle (3.5pt) node[below right=1.5pt] {$b$};
\end{tikzpicture}	
\end{center}
\caption{Saturation of a half-space.}\label{fig:saturation}
\end{figure}

\begin{example}
Consider the cone of Figure~\ref{FigTropicalCone}, and the half-space $\{ x \in \projmaxspace[2] \mid x_1 \mpplus x_3 \geq (-8) x_2 \}$ with apex $a = (0,8,0)$, depicted in orange in Figure~\ref{fig:saturation}. It can be verified that $\lambda_1 = -3$, $\lambda_2 = -1$, and $\lambda_3 = -4$, thus $\beta_1 = (-3) v^1_1 \mpplus (-1) v^2_1 \mpplus (-4) v^3_1 = -1$. Similarly, $\beta_2 = 5$ and $\beta_3 = 0$, so that $b = (-1,5,0)$, and $I' = \{3\}$. The half-space $\HH(b,I')$ is represented in green in Figure~\ref{fig:saturation}. 
\end{example}

\begin{remark}
Note that when $I \cup J = \oneto{n}$, we have $\lambda_r=\max\{\lambda \in \maxplus \mid \lambda v^r \leq a\}$, where $a\mydef (\alpha_1,\ldots ,\alpha_n)$ is the apex of $\HH$ (here $\leq$ refers to the component-wise comparison over vectors of $\R^n$). Then, in that case, the apex $b$ can be seen as the projection of the vector $a$ onto the cone $\CC$. This projection is known to minimize the tropical projective Hilbert metric, \ie~for all $x \in \CC$, $d_H(a,b) \leq d_H(a,x)$, see~\cite{cgq00,cgq02} for details. 
\end{remark}

In general, the half-space $\HH(b,I')$ is not minimal with respect to $\CC$. However, we next show that $\HH(b,I')$ is minimal in the important special case where $\HH$ is the half-space associated with a non-trivial extreme vector of the $j$th polar of $\CC$ ($j \in [n]$).

\begin{proposition}\label{PropExtremeSaturationMinimal}
Let $\HH = \{ x \in \projmax^{n-1} \mid \mpplus_{i\in \compl{\{j\}}} u_i x_i \geq u_j x_j\}$ be the half-space associated with a non-trivial extreme vector $u$ of the $j$th polar of $\CC$. The half-space obtained by saturation of $\HH$ is minimal respect to $\CC$, and is of the form $\HH(b,I)$ with $b \in \projspace$ satisfying $u = \vectinverse{b_I} \mpplus \vectinverse{b_j} e_j$.
\end{proposition}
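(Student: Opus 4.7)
The plan is first to compute the saturation of $\HH$ explicitly, and then to verify minimality by applying the criterion of Theorem~\ref{Theo-Charac-Minimal}. Write $I = \{i \in \compl{\{j\}} \mid u_i \neq \mpzero\}$, so that $\HH$ takes the form $\{x \mid \mpplus_{i \in I} u_i x_i \geq u_j x_j\}$, and set $\alpha_h = \mpinverse{u_h}$ for $h \in I \cup \{j\}$. Because $u \in \ipolar[j]{\CC}$, one has $u_j v^r_j \leq \mpplus_{i \in I} u_i v^r_i$ for every $r \in \oneto{p}$, so that the saturation weights simplify to $\lambda_r = \mpinverse{(\mpplus_{i \in I} u_i v^r_i)}$.

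Next, I would invoke the extremality of $u$ via Theorem~\ref{TheoCharacExtremeIPolar}: for each $i \in I$, there exists $r_i \in \oneto{p}$ such that $u_i v^{r_i}_i = u_j v^{r_i}_j$ strictly dominates all the other products $u_k v^{r_i}_k$. This yields $\lambda_{r_i} v^{r_i}_i = \mpinverse{u_i}$ and $\lambda_{r_i} v^{r_i}_j = \mpinverse{u_j}$, hence $\beta_i \geq \mpinverse{u_i}$ and $\beta_j \geq \mpinverse{u_j}$. The opposite inequalities follow from the simplified formula for $\lambda_r$: for every $r$ and every $h \in I \cup \{j\}$ (using $u \in \ipolar[j]{\CC}$ when $h = j$) one has $\lambda_r v^r_h \leq \mpinverse{u_h}$. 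Consequently $\beta_i = \mpinverse{u_i}$ for $i \in I$ and $\beta_j = \mpinverse{u_j}$, which gives $I' = I$ and proves the identity $u = \vectinverse{b_I} \mpplus \vectinverse{b_j} e^j$ componentwise, since the coordinates indexed by $\compl{(I \cup \{j\})}$ vanish on both sides.

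It remains to verify Conditions~\ref{item:C1}, \ref{item:C2}, \ref{item:C3minimal} of Theorem~\ref{Theo-Charac-Minimal} for the apex $b$ and the sector set $I$. For~\ref{item:C1}, the key observation is that for every $r \in \oneto{p}$ and every $k \in \oneto{n}$, $\mpinverse{\beta_k} v^r_k \leq \mpinverse{\lambda_r} = \mpplus_{i \in I} \mpinverse{\beta_i} v^r_i$: this is true for $k \in \compl{(I \cup \{j\})}$ by the very definition of $\beta_k$, and for $k = j$ by $u \in \ipolar[j]{\CC}$; thus the maximum of $\mpinverse{\beta_k} v^r_k$ over $\oneto{n}$ is already attained in $I$. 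Condition~\ref{item:C2} is automatic by Remark~\ref{remark:minimal}, since $b \in \CC$ by Proposition~\ref{Prop-Saturation}. For~\ref{item:C3minimal}, I would take $j \in \compl{I}$ as the witness for each $i \in I$: at the index $r_i$ above, $r_i \in S_i(b) \cap S_j(b)$ because the common value $\mpinverse{\beta_i} v^{r_i}_i = \mpinverse{\beta_j} v^{r_i}_j = \mpinverse{u_i} v^{r_i}_i$ attains the overall maximum, while the strict inequality in Theorem~\ref{TheoCharacExtremeIPolar} ensures $r_i \notin S_{i'}(b)$ for every $i' \in I \setminus \{i\}$; hence $S_i(b) \cap S_j(b) \not\subset \cup_{i' \in I \setminus \{i\}} S_{i'}(b)$.

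The main obstacle is the careful bookkeeping of tropical inequalities across the three disjoint classes of coordinates, $i \in I$, the index $j$, and $k \in \compl{(I \cup \{j\})}$ where $u_k = \mpzero$ and $\beta_k$ is determined only by the generators. The simplification $\mpinverse{\lambda_r} = \mpplus_{i \in I} u_i v^r_i$, which encodes that the defining inequality of $\ipolar[j]{\CC}$ is always witnessed on the $I$ side at every $v^r$, is the linchpin: it reduces every relevant supremum to the set $I$ and makes both~\ref{item:C1} and~\ref{item:C3minimal} tractable.
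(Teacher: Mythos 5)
Your proposal is correct and follows essentially the same approach as the paper's proof: both identify $I$ and $\alpha$ from $u$, use Theorem~\ref{TheoCharacExtremeIPolar} to produce a witness generator $r_i$ for each $i\in I$, deduce $\beta_h = \mpinverse{u_h}$ for $h\in I\cup\{j\}$ (hence $I'=I$ and the claimed formula for $u$), and verify Condition~\ref{item:C3minimal} with $j\in\compl{I}$ using $r_i\in S_i(b)\cap S_j(b)\setminus\cup_{i'\in I\setminus\{i\}}S_{i'}(b)$, while Conditions~\ref{item:C1} and~\ref{item:C2} hold because $\CC\subset\HH(b,I')$ and $b\in\CC$. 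Your explicit derivation of $\beta_h=\mpinverse{u_h}$ and the direct check of Condition~\ref{item:C1} are slightly more detailed than the paper, which simply cites the proof of Proposition~\ref{Prop-Saturation} and Remark~\ref{remark:minimal}, but the underlying reasoning is the same.
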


\begin{proof} 
Let $\HH(b,I')$ be the half-space obtained by saturation of $\HH$. 
Note that using the notation of Proposition~\ref{Prop-Saturation}, we have $\HH = \{ x\in \projmaxspace \mid \mpplus_{i \in I} \mpinverse{\alpha_i} x_i \geq \mpplus_{j \in J} \mpinverse{\alpha_j} x_j\}$ where $J=\{j\}$, $I=\{i \in \oneto{n} \mid  u_i \neq \mpzero, i\neq j\}$ and $\alpha_h=\mpinverse{u_h}$ for $h\in I\cup J= I\cup \{j\}$.

By Theorem~\ref{TheoCharacExtremeIPolar}, for each $i \in I$ there exists $r \in \oneto{p}$ such that 
\begin{equation}
u_i v^r_i = u_j v^r_j > \mpplus_{k \in I \setminus \{i\}} u_k v^r_k \;. \label{EqPropExtremeSaturationMinimal}
\end{equation}
As we have seen in the proof of Proposition~\ref{Prop-Saturation}, this implies $\beta_h = \alpha_h=\mpinverse{u_h}$ for all $h \in I \cup \{j\}$, and so in particular $I' = I$. Besides, by~\eqref{EqPropExtremeSaturationMinimal}, we obtain that 
\[
\vectinverse{b_i} v^r_i = \vectinverse{b_j} v^r_j > \mpplus_{k \in I \setminus \{i\}} \vectinverse{b_k} v^r_k \;.
\]
It follows that both $\vectinverse{b_i} v^r_i$ and $\vectinverse{b_j} v^r_j$ are maximal among the $\vectinverse{b_h} v^r_h$ for $h \in I \cup \{j \}$, and even among the $\vectinverse{b_h} v^r_h$ for $h \in [n]$, since $\vectinverse{b_{I'}} v^r \geq \vectinverse{b_{\compl{I'}}} v^r$ and $I = I'$. Thus $r \in S_i(b) \cap S_j(b)$. However, $r \not \in \cup_{k \in I \setminus \{i\}} S_k(b)$, and so Condition~\ref{item:C3minimal} holds for $\HH(b,I')$. 

Moreover, by Remark~\ref{remark:minimal}, Conditions~\ref{item:C1} and~\ref{item:C2} are satisfied as $\CC \subset \HH(b,I')$ and $b \in \CC$. Therefore, $\HH(b,I')$ is a minimal half-space with respect to $\CC$. 
\end{proof}
	
\section{A combinatorial criterion to determine whether a half-space is redundant}\label{sec:criterion}

Let $\Gamma$ be a set of (possibly degenerate) half-spaces. A half-space $\HH$ is said to be \myemph{redundant} with respect to $\Gamma$ if $\HH$ is implied by the half-spaces in $\Gamma$, meaning that their intersection $\cap_{\HH' \in \Gamma} \HH'$ is contained in $\HH$. 

In this section, we show that the redundancy of a non-degenerate half-space $\HH$ with respect to $\Gamma$ is a local property when the apex of $\HH$ is assumed to belong to all the half-spaces in $\Gamma$. As a consequence, under the same assumption, we show that the redundancy of a half-space in a finite set of half-spaces is equivalent to a reachability problem in directed hypergraphs.    

\begin{propositiondefinition}\label{Prop-Def-Locally-Redundant}
Let $\Gamma $ be a set of (possibly degenerate) half-spaces, and $\HH$ 
a non-degenerate half-space whose apex belongs to each half-space in $\Gamma$. Then, $\HH$ is redundant with respect to $\Gamma$ if, and only if, there exists a neighborhood $\NN$ of $\apex(\HH)$ such that $(\cap_{\HH' \in \Gamma} \HH') \cap \NN \subset \HH$. 

In the latter case, $\HH$ is said to be \myemph{locally redundant} with respect to $\Gamma$. 
\end{propositiondefinition}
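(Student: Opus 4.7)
The forward direction is immediate: if $\HH$ is globally redundant with respect to $\Gamma$, then $\bigcap_{\HH' \in \Gamma} \HH' \subset \HH$, and intersecting both sides with any neighborhood preserves the inclusion. The content is entirely in the converse implication, which I would prove by contradiction: given $x \in \bigcap_{\HH' \in \Gamma} \HH'$ with $x \notin \HH$, the plan is to produce a witness of this failure arbitrarily close (in the Hilbert metric) to the apex $a \mydef \apex(\HH)$, thereby violating the local containment hypothesis on $\NN$.

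Write $I \mydef \sect(\HH)$, $\alpha \mydef \vectinverse{a}_I x$, and $\beta \mydef \vectinverse{a}_{\compl{I}} x$, so that $x \in \HH$ is the inequality $\alpha \geq \beta$. Assume for contradiction $\alpha < \beta$, which in particular forces $\beta \in \R$ (otherwise $\beta = \mpzero$ and $x$ lies in $\HH$ trivially). The key observation is that every tropical half-space, degenerate or not, is a tropical cone, so $a, x \in \HH'$ for each $\HH' \in \Gamma$ implies that the family $y_\mu \mydef a \mpplus \mu x$ lies in $\bigcap_{\HH' \in \Gamma} \HH'$ for every $\mu \in \R$.

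Choosing $\mu \mydef -\beta + \epsilon$ with $0 < \epsilon < \beta - \alpha$, I expect a direct coordinate-wise computation to show that $(y_\mu)_i = a_i$ for all $i \in I$ (since $\mu + x_i - a_i \leq \mu + \alpha < 0$), whereas $a_j \leq (y_\mu)_j \leq a_j + \epsilon$ for $j \in \compl{I}$, with the upper bound attained at the argmax defining $\beta$. It will then follow simultaneously that $y_\mu \notin \HH$ and $d_H(a, y_\mu) \leq \epsilon$. Taking $\epsilon$ small enough to place $y_\mu$ inside $\NN$ yields the contradiction.

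The main obstacle is keeping the Hilbert distance estimate under control: the non-degeneracy hypothesis on $\HH$ is essential precisely because it guarantees $I \neq \emptyset$, which is what forces the minimum of $(y_\mu)_i - a_i$ over $i \in [n]$ to equal $0$ and so bounds $d_H(a, y_\mu)$ by $\epsilon$ rather than letting it grow. A secondary sanity check is that $a \in \projspace$ together with $\mu \in \R$ ensures $y_\mu$ remains in $\projspace$ even when $x$ has $\mpzero$ coordinates, so that the Hilbert metric comparison with $a$ is meaningful.
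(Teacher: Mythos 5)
Your argument is correct and is essentially the same construction as the paper's proof: both build $y = a \mpplus \mu x$ with $\mu$ just above the threshold $\lambda$ at which $\lambda x \leq a$ (your $\mu = -\beta + \epsilon$ equals $\lambda + \epsilon$, since $\beta = \max_k(x_k - a_k)$ when $x \notin \HH$), and both verify that $y$ agrees with $a$ on the $I$-coordinates while strictly exceeding it on some coordinate in $\compl{I}$. The only cosmetic difference is that the paper singles out the set $R$ of coordinates where the bound $\lambda x \leq a$ is tight, whereas you carry out the coordinate-wise estimate directly; your explicit bound $0 \leq (y_\mu)_k - a_k \leq \epsilon$, giving $d_H(a,y_\mu) \leq \epsilon$, makes the ``$y \in \NN$'' step precise.
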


\begin{proof}
The ``only if'' part is obvious. 

To prove the ``if'' part, let $a\mydef\apex(\HH)$, $I \mydef \sect(\HH)$ and $\DD \mydef \cap_{\HH' \in \Gamma} \HH'$. Assume there exists a neighborhood $\NN$ of $a$ such that $\DD \cap \NN \subset \HH$, but $\HH$ is non-redundant in $\Gamma$, \ie~$\DD \not \subset \HH$. Then, pick any $x \in \DD \setminus \HH$ and let $j \in \compl{I}$ be such that $\vectinverse{a_j} x_j >  \vectinverse{a_i} x_i$ 
for all $i \in I$. Define $\lambda$ as the maximal scalar such that 
$\lambda x_i \leq a_i$ for all $i \in [n]$. Let us denote by $R$ the (non-empty) set of the coordinates $r$ such that $\lambda x_r = a_r$. Note that for any $i \in I$, 
\[
\lambda x_i < \lambda \vectinverse{a_j} x_j  a_i \leq a_i \;,
\]
and so $R \cap  I= \emptyset$.  

Now, define $y \mydef a \mpplus \mu x$, where $\mu > \lambda$. 
Due to the definition of $R$, if we take $\mu$ close enough to $\lambda$, 
we have 
\[
y_r > a_r \iff r \in R \;.
\]
Then, since $R \cap I = \emptyset$, 
it follows that $y_i = a_i$ for all $i \in I$. 
As a consequence, $y \not \in \HH(a,I)$ while $y \in \DD$ 
(because $y$ is a tropical linear combination of $a,x \in \DD$ and $\DD$ is a tropical cone). 
However, this contradicts the fact that $\DD \cap \NN \subset \HH(a,I)$, 
because $y \in \NN$ for $\mu$ close enough to $\lambda$.
\end{proof}

To exploit the local characterization of Proposition~\ref{Prop-Def-Locally-Redundant}, we use the notion of tangent cone~\cite{AGG10,AllamigeonGaubertGoubaultDCG2013}. Given a vector $z \in \projspace$ of a tropical polyhedral cone $\DD \subset \projmaxspace$, the tangent cone of $\DD$ at $z$ provides a description of $\DD$ in a neighborhood of $z$. 
We say that a tropical half-space $\{ x \in \projmaxspace \mid \mpplus_{i \in I} \mpinverse{\alpha_i} x_i \geq \mpplus_{j \in J} \mpinverse{\alpha_j} x_j\}$ is \myemph{active} at $z$ if the following equality holds:
\[
\mpplus_{i \in I} \mpinverse{\alpha_i} z_i = \mpplus_{j \in J} \mpinverse{\alpha_j} z_j \; .
\]
\begin{definition}
Let $\DD = \cap_{\HH \in \Gamma} \HH\subset \projmaxspace$, where $\Gamma$ is a finite set of (possibly degenerate) half-spaces, and let $z \in \DD \cap \projspace$. With each half-space $\HH = \{ x \in \projmaxspace \mid \mpplus_{i \in I} \mpinverse{\alpha_i} x_i \geq \mpplus_{j \in J} \mpinverse{\alpha_j} x_j\}$ in $\Gamma$ active at $z$, we associate the inequality 
\begin{equation}
\max_{i \in M} y_i \geq \max_{j \in N} y_j \; ,  \label{eq:tangent_cone_description}
\end{equation}
where $M$ and $N$ are respectively the argument of the maxima $\mpplus_{i \in I} \mpinverse{\alpha_i} z_i$ and $\mpplus_{j \in J} \mpinverse{\alpha_j} z_j$.

Then, the \myemph{tangent cone} $\tangent(\DD ,z)$ of $\DD$ at $z$ is given by the set of vectors $y \in \projspace$ satisfying all the inequalities of the form~\eqref{eq:tangent_cone_description} associated with the (active) half-spaces in $\Gamma$. 
\end{definition}

The term \myemph{tangent cone} refers to the usual terminology used in optimization and convex analysis. In particular, the term \myemph{cone} refers here to the property that for all $y \in \tangent(\DD,z)$ and $\lambda > 0$, the vector $\lambda \times y$ belongs to the set $\tangent(\DD,z)$.

\begin{proposition}[\cite{AllamigeonGaubertGoubaultDCG2013}]\label{prop:tangent_cone_exactness}
Let $z \in \DD \cap \projspace$, where $\DD$ is a tropical polyhedral cone. There exists a neighborhood $\NN$ of $z$ such that for all $x \in \NN$, $x \in \DD$ if, and only if, $x \in z + \tangent(\DD,z)$.
\end{proposition}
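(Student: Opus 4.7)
The plan is to exploit the continuity of the tropical operations together with the finiteness of $\Gamma$, in order to reduce the local membership in $\DD = \cap_{\HH \in \Gamma} \HH$ to the linear inequalities defining $\tangent(\DD,z)$. To this end, I would partition $\Gamma$ into the \emph{active} half-spaces at $z$ (those whose defining equality holds at $z$) and the \emph{inactive} ones, and handle them separately.

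For an inactive half-space $\HH = \{ x \in \projmaxspace \mid \mpplus_{i \in I} \mpinverse{\alpha_i} x_i \geq \mpplus_{j \in J} \mpinverse{\alpha_j} x_j\}$, the strict inequality $\mpplus_{i \in I} \mpinverse{\alpha_i} z_i > \mpplus_{j \in J} \mpinverse{\alpha_j} z_j$ holds at $z$ (since $z \in \DD \subset \HH$); by continuity this strict inequality persists on some neighborhood $\NN_{\HH}$ of $z$, so that $\NN_{\HH} \subset \HH$ and $\HH$ contributes no constraint to $\tangent(\DD,z)$. For an active half-space, let $c$ denote the common value of $\mpplus_{i \in I} \mpinverse{\alpha_i} z_i$ and $\mpplus_{j \in J} \mpinverse{\alpha_j} z_j$, and let $M,N$ be the corresponding argmax sets. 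For every $i \in I \setminus M$ we have $\mpinverse{\alpha_i} z_i < c$, a strict inequality that persists on a suitable neighborhood $\NN_{\HH}$ of $z$ by continuity; the same applies to $j \in J \setminus N$. Thus on $\NN_{\HH}$ one has $\mpplus_{i \in I} \mpinverse{\alpha_i} x_i = \mpplus_{i \in M} \mpinverse{\alpha_i} x_i$ and analogously for $J, N$. Writing $x_i = z_i + (x-z)_i$ and using $\mpinverse{\alpha_i} z_i = c$ for $i \in M$ (resp.\ $j \in N$), the tropical inequality $x \in \HH$ becomes equivalent to
\[
\max_{i \in M} (x-z)_i \geq \max_{j \in N} (x-z)_j \; ,
\]
which is precisely the inequality \eqref{eq:tangent_cone_description} attached to $\HH$ in the definition of the tangent cone.

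Setting $\NN \mydef \bigcap_{\HH \in \Gamma} \NN_{\HH}$ still yields a neighborhood of $z$ since $\Gamma$ is finite. On $\NN$, both implications then follow: $x \in \DD$ reduces to $x$ satisfying the tropical inequality of each active $\HH$ (the inactive ones being automatic), which by the preceding paragraph is equivalent to $y \mydef x - z$ satisfying every defining inequality of $\tangent(\DD, z)$, i.e.\ to $x \in z + \tangent(\DD,z)$. The subtraction $x - z$ is unproblematic because $z \in \projspace$ forces any $x$ with $d_H(x,z) < +\infty$ to have only finite coordinates. The main (and essentially only) obstacle is to ensure, uniformly over all half-spaces of $\Gamma$, that the indices achieving the tropical maxima on $\NN$ are exactly the argmax sets at $z$; once this is secured by continuity and the finiteness of $\Gamma$, the rest of the argument is straightforward bookkeeping.
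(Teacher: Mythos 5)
Your proof is correct. Note, however, that this paper does not actually prove Proposition~\ref{prop:tangent_cone_exactness}: it is imported as a citation from~\cite{AllamigeonGaubertGoubaultDCG2013}, so there is no in-paper argument to compare against. Your proposal supplies a self-contained, elementary justification and it is the expected one: split the finitely many half-spaces in $\Gamma$ into those active and inactive at $z$; the inactive constraints hold strictly at $z$, hence persist on a neighborhood and contribute no inequality to the tangent cone; for an active constraint, the strict gaps $\mpinverse{\alpha_i} z_i < c$ for $i \in I \setminus M$ (and the analogous gaps on the $J$ side) persist on a neighborhood, so the tropical maxima over $I$ and $J$ reduce to maxima over $M$ and $N$; and since all the surviving terms equal $c$ at $z$, subtracting $z$ (which is legitimate because $z \in \projspace$ forces any $x$ at finite Hilbert distance to have all finite coordinates) converts the half-space constraint into exactly the inequality~\eqref{eq:tangent_cone_description}. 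Intersecting the finitely many neighborhoods gives $\NN$. The only point I would tighten is the phrasing ``this strict inequality persists'': what is actually used is that for each $i \in I \setminus M$ the strict gap $\mpinverse{\alpha_i} z_i < \max_{k \in M} \mpinverse{\alpha_k} z_k$ persists, which after taking the (finite) intersection over all such $i$ guarantees that the argmax set over $I$ at nearby $x$ is contained in $M$; this is implicit in your argument and the finiteness you invoke makes it rigorous.
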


We now introduce an equivalent encoding of tangent cones in terms of directed hypergraphs. Recall that directed hypergraphs are generalizations of directed graphs, in which the tail and the head of arcs may consist of several nodes. More precisely, a \myemph{directed hypergraph} on the node set $[n] = \{1, \dots, n\}$ consists of a set of \myemph{hyperarcs}, each of which is of the form $(T,H)$, where $T, H \subset \oneto{n}$. 

Reachability can be naturally extended to directed hypergraphs as follows. Given a directed hypergraph $\GG$ on the node set $\oneto{n}$, a node $j \in [n]$ is reachable from a set of nodes $I \subset \oneto{n}$ if one of the following two conditions holds:
\begin{enumerate}[label=(\roman*)] 
\item $j$ belongs to $I$,  
\item or there is a hyperarc $(T, H)$ in $\GG$ such that $j \in H$, and every $t \in T$ is reachable from $I$.
\end{enumerate}
By extension, given two sets of nodes $I, J \subset \oneto{n}$, $J$ is reachable from $I$ if each node in $J$ is reachable from $I$. Equivalently, $J$ is reachable from $I$ if there exists a \myemph{hyperpath} from $I$ to $J$, \ie~a sequence $(T_1, H_1),\ldots ,(T_q, H_q)$ of hyperarcs of $\GG$ such that:
\[
T_i \subset \cup_{0 \leq j \leq i-1} H_j \quad \text{for all} \ i \in \oneto{q+1} \;,
\]
with the convention $H_0 = I$ and $T_{q+1} = J$. 

\begin{remark}\label{remark:reachability_hypergraph}
Given $I \subset \oneto{n}$, the set of the subsets of $[n]$ reachable from $I$ admits a greatest element $R \subset \oneto{n}$, composed of all the nodes $j \in [n]$ reachable from $I$.
\end{remark}

A directed hypergraph consequently provides a concise representation, in terms of a set of hyperarcs, of a possibly large set of relations between subsets of $[n]$. This representation also allows to efficiently determine the relation between two subsets. Indeed, the reachability from $I$ to $J$ can be determined in linear time in the size $\sum_{(T,H) \in \GG} (\card{T} + \card{H})$ of the hypergraph, see for instance~\cite{GalloDAM93}.

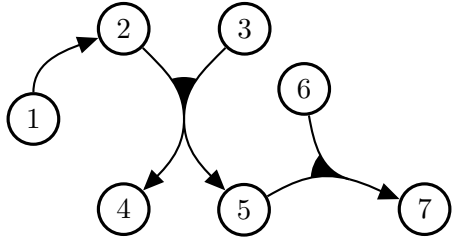
\begin{figure}
\begin{center}
\begin{tikzpicture}[>=triangle 45,scale=0.8, vertex/.style={circle,draw=black,very thick,minimum size=2ex}, hyperedge/.style={draw=black,thick}, simpleedge/.style={draw=black,thick}]
\node [vertex] (v1) at (-1.5,-1.5) {$1$};
\node [vertex] (v2) at (0,0) {$2$};
\node [vertex] (v3) at (2.01,0) {$3$};
\node [vertex] (v4) at (0,-3) {$4$};
\node [vertex] (v5) at (2,-3.01) {$5$};
\node [vertex] (v6) at (3,-1) {$6$};
\node [vertex] (v7) at (5,-3) {$7$};
\draw[simpleedge,->] (v1) edge[out=90,in=-160] (v2);
\hyperedge{v3,v2}{v4,v5};
\hyperedgewithangles{v6/-80,v5/30}{-15}{v7/158};
\end{tikzpicture}
\end{center}
\caption{A directed hypergraph.}\label{fig:hypergraph}
\end{figure}

\begin{example}
We provide an example of directed hypergraph on the node set $\{1, \dots, 7\}$ in Figure~\ref{fig:hypergraph}. It consists of the hyperarcs $(\{1\},\{2\})$, $(\{2,3\},\{4,5\})$, and $(\{5,6\},\{7\})$. Each hyperarc is represented as a bundle of arrows decorated by a solid disk sector. For instance, nodes $4$ and $5$ are both reachable from the set $\{1,3\}$, through a hyperpath formed by the first hyperarc (which leads to node $2$) and the second one. Similarly, the greatest set reachable from $\{1,3,6\}$ is the whole set of nodes $[7]$.
\end{example}

In our setting, directed hypergraphs are used to represent inequalities of the form~\eqref{eq:tangent_cone_description}.

\begin{definition}\label{def:tangent_hypergraph}
Let $\Gamma$ be a finite set of (possibly degenerate) half-spaces, and $z \in \projspace$ such that $z \in \HH$ for all $\HH$ in $\Gamma$. With each half-space $\HH = \{ x \in \projmaxspace \mid \mpplus_{i \in I} \mpinverse{\alpha_i} x_i \geq \mpplus_{j \in J} \mpinverse{\alpha_j} x_j\}$ in $\Gamma$ active at $z$, we associate the hyperarc $(M, N)$, where 
\[
M \mydef \argmax(\mpplus_{i \in I} \mpinverse{\alpha_i} z_i) \quad \text{and} \quad N \mydef \argmax(\mpplus_{j \in J} \mpinverse{\alpha_j} z_j) \;.
\]
The \myemph{tangent directed hypergraph at $z$ induced by $\Gamma$}, denoted by $\GG(\Gamma,z)$, is the directed hypergraph on the node set $\oneto{n}$ whose hyperarcs are the ones associated with the active half-spaces in $\Gamma$. 
\end{definition}

Observe that by definition, $\GG(\Gamma,z)$ depends on the set of half-spaces $\Gamma$. However, the following proposition shows that the reachability relations in $\GG(\Gamma,z)$ only depend on the tropical cone $\DD = \cap_{\HH \in \Gamma} \HH$. 

\begin{proposition}\label{prop:tangent_hypergraph}
Let $\DD \subset \projmaxspace$ be a tropical cone, and $z \in \DD \cap \projspace$. Assume $\DD = \cap_{\HH \in \Gamma} \HH$, where $\Gamma$ is a finite set of (possibly degenerate) half-spaces. Then, for any $I, J \subset \oneto{n}$, the following statements are equivalent:
\begin{enumerate}[label=(\roman*)]
\item\label{item:tangent_hypergraph1} $J$ is reachable from $I$ in the directed hypergraph $\GG(\Gamma,z)$,
\item\label{item:tangent_hypergraph2} the inequality $\max_{i \in I} y_i \geq \max_{j \in J} y_j$ is \myemph{valid} for $\tangent(\DD,z)$, meaning that it is satisfied for any $y \in \tangent(\DD,z)$.
\end{enumerate}
\end{proposition}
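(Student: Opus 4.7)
The plan is to prove the two directions separately, making direct use of the definition of reachability via hyperpaths and of the inequality description of $\tangent(\DD,z)$.

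For the implication \ref{item:tangent_hypergraph1} $\Rightarrow$ \ref{item:tangent_hypergraph2}, fix $y \in \tangent(\DD,z)$, so that $y$ satisfies $\max_{m \in M} y_m \geq \max_{n \in N} y_n$ for every hyperarc $(M,N)$ of $\GG(\Gamma,z)$. Let $(T_1,H_1), \ldots, (T_q,H_q)$ be a hyperpath from $I$ to $J$, and write $H_0 \mydef I$. I will prove by induction on $k \in \{0,1,\ldots,q\}$ that $y_t \leq \max_{i \in I} y_i$ for every $t \in H_0 \cup H_1 \cup \cdots \cup H_k$. The base case $k=0$ is trivial. For the inductive step, fix $h \in H_k$ and apply the inequality associated with the hyperarc $(T_k,H_k)$, yielding $y_h \leq \max_{t \in T_k} y_t$. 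Since $T_k \subset H_0 \cup \cdots \cup H_{k-1}$, the induction hypothesis gives $\max_{t \in T_k} y_t \leq \max_{i \in I} y_i$, so $y_h \leq \max_{i \in I} y_i$. Finally, the hyperpath condition $J = T_{q+1} \subset H_0 \cup \cdots \cup H_q$ and the case $k=q$ give $\max_{j \in J} y_j \leq \max_{i \in I} y_i$, as required.

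For the implication \ref{item:tangent_hypergraph2} $\Rightarrow$ \ref{item:tangent_hypergraph1}, I argue by contrapositive. Assume $J$ is not reachable from $I$ in $\GG(\Gamma,z)$, and let $R$ be the greatest subset of $[n]$ reachable from $I$ (Remark~\ref{remark:reachability_hypergraph}). Since $J \not\subset R$, choose $j_0 \in J \setminus R$. Define $y \in \R^n$ by $y_h \mydef 0$ if $h \in R$ and $y_h \mydef 1$ otherwise. I verify that $y \in \tangent(\DD,z)$: for every hyperarc $(M,N)$ of $\GG(\Gamma,z)$, either $M \not\subset R$, in which case some $m \in M$ satisfies $y_m = 1$ and the inequality $\max_{m \in M} y_m \geq \max_{n \in N} y_n$ is immediate; or else $M \subset R$, meaning every element of $M$ is reachable from $I$, which by rule (ii) of the reachability definition forces every element of $N$ to be reachable from $I$ through the hyperarc $(M,N)$. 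Thus $N \subset R$, and both sides of the inequality are equal to $0$. Hence $y \in \tangent(\DD,z)$. However, $I \subset R$ (every $i \in I$ is trivially reachable from $I$), so $\max_{i \in I} y_i = 0$, while $y_{j_0} = 1$ gives $\max_{j \in J} y_j \geq 1$. This contradicts validity of the inequality in \ref{item:tangent_hypergraph2}.

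There is no serious obstacle: the forward direction is a clean induction chaining the hyperarc inequalities along the hyperpath, while the reverse direction requires only the explicit $0$/$1$ test vector supported on the complement of the reachable closure of $I$. The only mild subtlety is noticing that when $M \subset R$ one must invoke the maximality of $R$ (via the second clause of the reachability definition) to conclude $N \subset R$; this is precisely what makes the test vector feasible for $\tangent(\DD,z)$.
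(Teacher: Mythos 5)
Your proof is correct and follows essentially the same approach as the paper: the forward direction is the same induction along the hyperpath, and the reverse direction constructs the same $0/\epsilon$ test vector supported on the complement of the reachable closure $R$ of $I$ (you fix $\epsilon=1$, the paper leaves it arbitrary, and the paper phrases the reverse step directly while you phrase it by contrapositive, but the content is identical).
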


\begin{proof}
Assume $J$ is reachable from $I$ in $\GG(\Gamma,z)$. By definition, there exists a (possibly empty) hyperpath $(T_1, H_1),\ldots ,(T_q, H_q)$ from $I$ to $J$ in $\GG(\Gamma,z)$, meaning that $T_i \subset \cup_{0 \leq l \leq i-1} H_l$ for $i \in \oneto{q+1}$, where $H_0 = I$ and $T_{q+1} = J$. By definition, each hyperarc $(T_k, H_k)$ corresponds to an inequality 
\[
\max_{i \in T_k} y_i \geq \max_{j \in H_k} y_j
\]
which is valid for $\tangent(\DD ,z)$. This allows us to prove by induction on $k$ that 
\[
\max_{i \in I} y_i \geq \max_{j \in H_k} y_j
\]
is a valid inequality for $\tangent(\DD ,z)$ for $k = 1, \dots, q$. Since $J \subset H_0 \cup \dots \cup H_q$, we conclude that $\max_{i \in I} y_i \geq \max_{j \in J} y_j$ is also valid for $\tangent(\DD, z)$.

Now assume that for all $y \in \tangent(\DD ,z)$, the inequality $\max_{i \in I} y_i \geq \max_{j \in J} y_j$ holds. Let $R$ be the biggest subset of $\oneto{n}$ reachable from $I$ in $\GG(\Gamma,z)$.

Given $\epsilon > 0$, define the vector $y' \in \projspace$ by $y'_i = 0$ if $i \in R$, and $y'_i =\epsilon$ otherwise. Consider any active half-space $\HH= \{ x \in \projmaxspace \mid \mpplus_{i \in I'} \mpinverse{\alpha_i} x_i \geq \mpplus_{j \in J'} \mpinverse{\alpha_j} x_j\}$ in $\Gamma$, and let $M, N$ be as in Definition~\ref{def:tangent_hypergraph}. We claim that $y'$ satisfies the inequality 
\[
\max_{i \in M} y'_i \geq \max_{i \in N} y'_i
\]
associated with $\HH$. If $M \not \subset R$, then it is obviously satisfied. If $M \subset R$, then the set $M$, and subsequently the set $N$, are both reachable from $I$ in $\GG(\Gamma,z)$. Thus, $M \cup N \subset R$ and 
\[
\max_{i \in M} y'_i = 0 = \max_{i \in N} y'_i \;.
\]
As this holds for any active half-space $\HH$ in $\Gamma$, we conclude that $y'$ belongs to the tangent cone $\tangent(\DD ,z)$. Since the inequality $\max_{i \in I} y_i \geq \max_{j \in J} y_j$ is valid for $\tangent(\DD ,z)$ and $I\subset R$, we have 
\[
0 = \max_{i \in I} y'_i \geq \max_{j \in J} y'_j \;,
\]
implying $y'_j=0$ for all $j\in J$. This means that $J\subset R$, and so $J$ is reachable from $I$ in $\GG(\Gamma,z)$.
\end{proof}

We are going to use the reduction to local redundancy to characterize redundancy by means of the tangent hypergraph.

\begin{proposition}\label{prop:local_redundancy_hypergraph_characterization}
Let $\Gamma$ be a finite set of (possibly degenerate) half-spaces and $\HH(a,I)$ a half-space whose apex belongs to every half-space in $\Gamma$. Then, $\HH(a,I)$ is redundant with respect to $\Gamma$ if, and only if, $[n]$ is reachable from $I$ in the tangent directed hypergraph $\GG(\Gamma,a)$.
\end{proposition}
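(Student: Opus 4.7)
The plan is to chain together the three preceding results. First, since $a$ lies in every half-space of $\Gamma$, I would apply Proposition-Definition~\ref{Prop-Def-Locally-Redundant} to reduce the redundancy of $\HH(a,I)$ with respect to $\Gamma$ to its \emph{local} redundancy at $a$: the existence of a neighborhood $\NN$ of $a$ with $\DD \cap \NN \subset \HH(a,I)$, where $\DD \mydef \cap_{\HH' \in \Gamma} \HH'$ is a tropical polyhedral cone.

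Next, I would feed this local inclusion into Proposition~\ref{prop:tangent_cone_exactness}. After possibly shrinking $\NN$, that proposition lets me rewrite $\DD \cap \NN$ as $(a + \tangent(\DD,a)) \cap \NN$, so local redundancy becomes $(a + \tangent(\DD,a)) \cap \NN \subset \HH(a,I)$. The crucial observation is that for any $y \in \projspace$, the translated point $a + y$ belongs to $\HH(a,I)$ if, and only if, the inequality
\[
\max_{i \in I} y_i \geq \max_{j \in \compl{I}} y_j
\]
holds, and this inequality is invariant under the rescaling $y \mapsto \lambda y$ for $\lambda > 0$. Combining this invariance with the fact that $\tangent(\DD,a)$ is closed under positive scaling, I would argue that the localized inclusion is equivalent to saying the above inequality is \emph{valid} for $\tangent(\DD,a)$ in the sense of Proposition~\ref{prop:tangent_hypergraph}.

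Finally, I would invoke Proposition~\ref{prop:tangent_hypergraph} to translate this validity into the reachability of $\compl{I}$ from $I$ in the tangent directed hypergraph $\GG(\Gamma,a)$. Since every node of $I$ is reachable from $I$ by definition, reachability of $\compl{I}$ from $I$ is the same as reachability of the whole set $[n]$ from $I$, which is the claim.

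The step that needs the most care is the second one: one must shrink $\NN$ enough so that Proposition~\ref{prop:tangent_cone_exactness} applies, and then use the conic structure of $\tangent(\DD,a)$ to upgrade the inclusion of the \emph{local piece} $(a + \tangent(\DD,a)) \cap \NN$ into $\HH(a,I)$ to a statement about \emph{every} vector of $\tangent(\DD,a)$. The remaining steps are direct unpackings of the definitions together with the cited propositions.
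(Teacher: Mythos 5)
Your proof is correct and follows essentially the same route as the paper: reduce to local redundancy via Proposition-Definition~\ref{Prop-Def-Locally-Redundant}, pass to the tangent cone via Proposition~\ref{prop:tangent_cone_exactness}, use the conic invariance under positive scaling to upgrade the local inclusion to a global validity statement, and then invoke Proposition~\ref{prop:tangent_hypergraph} to translate into hypergraph reachability. The observation that $a + y \in \HH(a,I)$ exactly when $\max_{i \in I} y_i \geq \max_{j \in \compl{I}} y_j$ is precisely the implicit step the paper uses when rewriting the inclusion, so nothing is missing.
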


\begin{proof}
Let $\DD = \cap_{\HH' \in \Gamma} \HH'$. By Proposition~\ref{Prop-Def-Locally-Redundant}, $\HH(a,I)$ is redundant with respect to $\Gamma$ if, and only if, there exists a neighborhood $\NN$ of $a$ such that $\DD \cap \NN \subset \HH(a,I)$. By Proposition~\ref{prop:tangent_cone_exactness}, this is equivalent to the fact that 
\begin{equation}\label{eq:local_redundancy_hypergraph_characterization}
\tangent(\DD,a) \cap \NN' \subset \bigl\{ y \in \projspace \mid \max_{i \in I} y_i \geq \max_{j \in \compl{I}} y_j \bigr\} \; , 
\end{equation}
for some neighborhood $\NN'$ of the vector $(0,\ldots ,0)$. Besides, we claim that~\eqref{eq:local_redundancy_hypergraph_characterization} holds if, and only, if:
\[
\tangent(\DD,a) \subset \bigl\{ y \in \projspace \mid \max_{i \in I} y_i \geq \max_{j \in \compl{I}} y_j \bigr\} \;.
\]
To see this, assume~\eqref{eq:local_redundancy_hypergraph_characterization} holds, and let $y \in \tangent(\DD ,a)$. Then, $\lambda \times y \in \tangent(\DD ,a)$ for all $\lambda > 0$, and if $\lambda$ is sufficiently small, $\lambda \times y \in \NN'$. It follows that $\lambda \times y$, and consequently $y$, satisfies the inequality $\max_{i \in I} y_i \geq \max_{j \in \compl{I}} y_j$, proving the claim. 

Finally, using the first part of the proof and Proposition~\ref{prop:tangent_hypergraph}, we conclude that $\HH(a,I)$ is redundant with respect to $\Gamma$ if, and only if, the set $\compl{I}$, or equivalently $[n]$, is reachable from $I$ in $\GG(\Gamma,a)$.
\end{proof}

\begin{figure}
\begin{center}
\begin{tikzpicture}[convex/.style={draw=black,ultra thick,fill=lightgray,fill opacity=0.7},point/.style={blue!50},>=triangle 45,hsborder/.style={orange,dashdotted,thick},hs/.style={draw=none,pattern=north west lines,pattern color=orange,fill opacity=0.5}]
\draw[gray!30,very thin] (-0.5,-0.5) grid (9.5,6.5);
\draw[gray!50,->] (-0.5,0) -- (9.5,0) node[color=gray!50,below] {$x_2-x_1$};
\draw[gray!50,->] (0,-0.5) -- (0,6.5) node[color=gray!50,above] {$x_3-x_1$};

\node[coordinate] (v1) at (1,3) {};
\node[coordinate] (v12) at (4,3) {};
\node[coordinate] (v2) at (4,1) {};
\node[coordinate] (v23) at (6,1) {};
\node[coordinate] (v3) at (9,4) {};
\node[coordinate] (v31) at (8,3) {};

\filldraw[convex] (v1) -- (v12) -- (v2) -- (v23) -- (v3) -- (v31) -- cycle;

\draw[hsborder] (1,-0.5) -- (1,6.5);
\filldraw[hs,pattern=north east lines] (1,-0.5) -- (1,6.5) -- (1.2,6.5) -- (1.2,-0.5);

\draw[hsborder] (-0.5,3) -- (4,3) -- (4,-0.5);
\filldraw[hs] (-0.5,3) -- (4,3) -- (4,-0.5) -- (4.2,-0.5) -- (4.2,3.2) -- (-0.5,3.2) -- cycle;

\draw[hsborder] (-0.5,1) -- (6,1) -- (9.5,4.5);
\filldraw[hs] (-0.5,1) -- (6,1) -- (9.5,4.5) -- (9.5,4.8) -- (5.9,1.2) -- (-0.5,1.2) -- cycle;

\draw[hsborder] (-0.5,4) -- (9.5,4);
\filldraw[hs] (-0.5,4) -- (9.5,4) -- (9.5,3.8) -- (-0.5,3.8) -- cycle;

\draw[hsborder] (-0.5,3) -- (8,3) -- (9.5,4.5);
\filldraw[hs] (-0.5,3) -- (8,3) -- (9.5,4.5) -- (9.5,4.2) -- (8.1,2.8) -- (-0.5,2.8) -- cycle;

\draw[blue!50,very thick,dashed] (1,-0.5) -- (v1) -- (4.5,6.5);
\filldraw[hs,pattern color=blue!50,fill opacity=0.9] (1,-0.5) -- (v1) -- (4.5,6.5) -- (5,6.5) -- (1.4,2.8) -- (1.4,-0.5);
\filldraw[point] (v1) circle (3pt) node[above left=1.5pt] {$v^1$};
\end{tikzpicture}	
\end{center}
\caption{Determining the redundancy of the half-space $\HH(v^1,\{2\})$ with respect to the half-spaces in orange.}\label{FigRedundancyCriterion}
\end{figure}
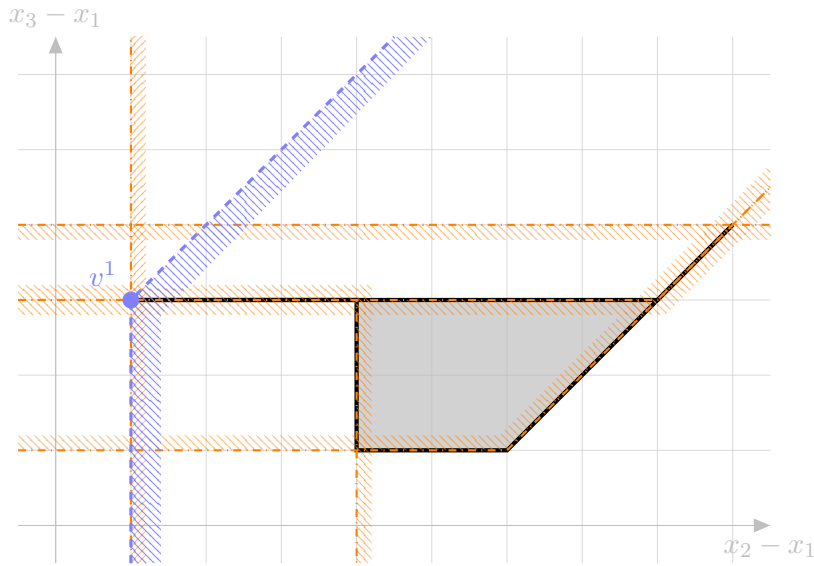

\begin{example}
The cone introduced in Figure~\ref{FigTropicalCone} can be expressed as the intersection of the collection $\Gamma$ of half-spaces given by the following inequalities:
\begin{equation}
\begin{aligned}
\underline{x_2} & \geq \underline{1 + x_1} \\
\max(-4+x_2,\underline{-3+x_3}) & \geq \underline{x_1} \\
-1+x_3 & \geq \max(x_1,-6+x_2) \\
x_1 & \geq -4+x_3 \\
\max(\underline{x_1},-8+x_2) & \geq \underline{-3+x_3} 
\end{aligned}
\label{eq:collection}
\end{equation}
These half-spaces are depicted in orange in Figure~\ref{FigRedundancyCriterion}. We illustrate Proposition~\ref{prop:local_redundancy_hypergraph_characterization} by establishing that the half-space $\HH(v^1,\{2\})$ (in blue in Figure~\ref{FigRedundancyCriterion}) is redundant with respect to $\Gamma$. Only the first two and last half-spaces of the list are active at $z$. For each of the corresponding inequalities $\mpplus_{i \in I} \mpinverse{\alpha}_i x_i \geq \mpplus_{j \in J} \mpinverse{\alpha}_j x_j$, the terms attaining the maxima $\mpplus_{i \in I} \mpinverse{\alpha_i} z_i$ and $\mpplus_{j \in J} \mpinverse{\alpha_j} z_j$ 
are underlined. The directed hypergraph $\GG(\Gamma,v^1)$ consequently consists of the hyperarcs $(\{2\},\{1\})$, $(\{3\},\{1\})$, and $(\{1\},\{3\})$.\footnote{Tangent directed hypergraphs can be computed with the library TPLib~\cite{tplib} (version 1.2 or later).} 
Node $1$ is reachable from $\{2\}$ through the first hyperarc, and then node $3$ is accessible through the last one. We conclude that the set $\{1,2,3\}$ is indeed reachable from $\{2\}$.
\end{example}

The interest of the criterion of Proposition~\ref{prop:local_redundancy_hypergraph_characterization} is not only theoretical, but also algorithmic, since it provides a polynomial-time method to eliminate superfluous half-spaces, assuming their apices belong to the other half-spaces:
\begin{corollary}
Given a finite set $\Gamma$ of (possibly degenerate) half-spaces, and a non-degenerate half-space $\HH$ such that $\apex(\HH) \in \HH'$ for all $\HH'$ in $\Gamma$, the redundancy of $\HH$ with respect to $\Gamma$ can be determined in time $O(n \card{\Gamma})$.
\end{corollary}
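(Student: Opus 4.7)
The plan is to combine the combinatorial characterization of Proposition~\ref{prop:local_redundancy_hypergraph_characterization} with the standard linear-time reachability algorithm for directed hypergraphs due to Gallo et al.~\cite{GalloDAM93}. Set $a \mydef \apex(\HH)$ and $I \mydef \sect(\HH)$. Since by hypothesis $a \in \HH'$ for every $\HH' \in \Gamma$, Proposition~\ref{prop:local_redundancy_hypergraph_characterization} applies, and deciding redundancy of $\HH$ reduces to testing whether $[n]$ is reachable from $I$ in the tangent directed hypergraph $\GG(\Gamma,a)$.

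Next, I would analyze the cost of building $\GG(\Gamma,a)$ explicitly. Each half-space $\HH' = \{x \mid \mpplus_{i \in I'} \mpinverse{\alpha_i} x_i \geq \mpplus_{j \in J'} \mpinverse{\alpha_j} x_j\}$ in $\Gamma$ is described by at most $n$ scalar coefficients, and producing its (possibly empty) hyperarc requires: evaluating the two tropical sums at $a$, comparing them to detect activity, and collecting the two argmax sets $M$ and $N$. Each of these operations is $O(n)$, so the whole hypergraph is constructed in $O(n\card{\Gamma})$ time. Moreover, since $M \subset I' \subset [n]$ and $N \subset J' \subset [n]$ with $I' \cap J' = \emptyset$, the produced hyperarc satisfies $\card{M} + \card{N} \leq n$, whence
\[
\sum_{(T,H) \in \GG(\Gamma,a)} \bigl(\card{T} + \card{H}\bigr) \;\leq\; n\card{\Gamma}\;.
\]

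Finally, I would invoke the linear-time reachability algorithm of~\cite{GalloDAM93}, which, given source set $I$, computes the greatest set $R \subset [n]$ reachable from $I$ (see Remark~\ref{remark:reachability_hypergraph}) in time proportional to the size of the hypergraph. Testing whether $[n] \subset R$ is then immediate, and the total running time is $O(n\card{\Gamma})$, as announced. There is no real obstacle here: the statement is a bookkeeping consequence of Proposition~\ref{prop:local_redundancy_hypergraph_characterization} together with a well-known complexity result for hypergraph reachability; the only mildly delicate point is the size bound on $\GG(\Gamma,a)$, which is handled above.
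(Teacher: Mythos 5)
Your proof is correct and follows the same approach as the paper: combine Proposition~\ref{prop:local_redundancy_hypergraph_characterization} with the linear-time hypergraph reachability algorithm of~\cite{GalloDAM93}, noting that each half-space contributes one hyperarc of total size at most $n$. The paper states the corollary without a detailed argument, relying on the preceding remark about hypergraph reachability complexity; you have merely filled in the bookkeeping (hypergraph construction cost and the size bound), which is accurate.
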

This result has to be compared with a criterion previously established in~\cite{AGK-10}, and expressed in terms of strategies for mean payoff games. Although the latter criterion applies to any half-space (without any assumption on the apex), it is not known whether it can be evaluated in polynomial time (the corresponding decision problem belongs to the complexity class $\mathsf{NP} \cap \mathsf{coNP}$).

\section{Non-redundant external representation of real polyhedral cones}\label{SectionNon-redundantRepre}

Throughout this section, $\CC \subset \projspace$ denotes a real polyhedral cone. Thanks to Proposition~\ref{Prop-Saturation}, we now focus on external representations of $\CC$ composed of (non-degenerate) half-spaces whose apices belong to $\CC$. 

We denote by $\Sigma$ the set of half-spaces containing $\CC$ and with apices in $\CC$, \ie 
\[
\Sigma\mydef \{ \HH \mid \CC \subset \HH \;,\;\apex(\HH) \in \CC  \}\; .
\]
To study the redundancy of a half-space in a set of half-spaces, 
it is convenient to introduce the function 
$\tau : 2^\Sigma \rightarrow 2^\Sigma$ defined by:
\begin{equation}\label{DefTau}
\tau(\Gamma) \mydef \{ \HH \in \Sigma \mid \cap_{\HH' \in \Gamma} \HH' \subset \HH \}\; .
\end{equation}
This function is a closure operator, meaning that for any $\Gamma, \Lambda \in 2^\Sigma$ the following properties hold:
\begin{enumerate}[label=(\roman*)]
\item\label{item:i} $\tau(\emptyset) = \emptyset$,
\item\label{item:ii} $\Gamma \subset \tau(\Gamma)$, 
\item\label{item:iii}  $\Gamma \subset \Lambda $ implies $\tau(\Gamma) \subset \tau(\Lambda)$,
\item\label{item:iv} $\tau(\tau(\Gamma)) = \tau(\Gamma)$.
\end{enumerate}

With this notation, a half-space $\HH \in \Sigma$ is redundant with respect to a set $\Gamma \subset \Sigma$ if, and only if, $\HH \in \tau(\Gamma)$ or, equivalently, $\tau(\Gamma) = \tau(\Gamma \cup \{\HH\})$. A finite set $\Gamma \subset \Sigma$ will be called a \myemph{non-redundant external representation} of $\CC$ if:
\[ 
\CC = \cap_{\HH \in \Gamma} \HH \ \text{(or equivalently,} \ \tau(\Gamma) = \Sigma \text{),} \  \text{and} \ \HH \not \in \tau(\Gamma \setminus \{\HH\}) \  \text{for each half-space} \  \HH \in \Gamma .
\]

This section is organized as follows. In Section~\ref{subsec:partial_antiexchange}, we prove a key result establishing that half-spaces with distinct apices satisfy an anti-exchange property. Section~\ref{subsec:non_redundant_apices} deals with non-redundant apices, and Section~\ref{subsec:non_redundant_with_same_apex} with non-redundant half-spaces with the same apex. These sections bring all the results to establish Theorem~\ref{Th-Main} in Section~\ref{subsec:proof}. 
Section~\ref{subsec:pure_case} is devoted to the particular case of non-redundant external representations of pure cones.

\subsection{The partial anti-exchange property}\label{subsec:partial_antiexchange}

We want to show the following partial anti-exchange property:
\begin{theorem}\label{prop:partial-anti-exchange} 
Let $\Gamma \subset \Sigma $ be a finite set of half-spaces and 
$\HH,\HH'\in \Sigma$ with distinct apices.
If $\HH' \not \in \tau(\Gamma )$ and 
$\HH'\in \tau(\{ \HH \}\cup \Gamma )$, 
then $\HH \not \in \tau(\{ \HH' \}\cup \Gamma )$.   
\end{theorem}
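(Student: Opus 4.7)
My plan is to argue by contradiction, exploiting the local redundancy criterion of Proposition~\ref{prop:local_redundancy_hypergraph_characterization} to reduce the problem to a combinatorial obstruction at the two apices.

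Suppose $\HH \in \tau(\{\HH'\} \cup \Gamma)$. Set $\DD := \cap_{\HH'' \in \Gamma} \HH''$ and let $a := \apex(\HH)$, $a' := \apex(\HH')$, $I := \sect(\HH)$, $I' := \sect(\HH')$. The two redundancy assumptions $\DD \cap \HH \subset \HH'$ and $\DD \cap \HH' \subset \HH$ combine to yield the equality $\DD \cap \HH = \DD \cap \HH'$. Since $\HH' \not\in \tau(\Gamma)$ means $\DD \not\subset \HH'$, the equality forces $\DD \not\subset \HH$, so $\HH \not\in \tau(\Gamma)$ as well. Because $a \in \CC \subset \DD$ and trivially $a \in \HH$, the equality implies $a \in \HH'$, and symmetrically $a' \in \HH$. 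I next argue that $\HH'$ must be active at $a$: if $a$ were in the topological interior of $\HH'$, then a Hilbert neighborhood $\NN$ of $a$ would satisfy $\DD \cap \NN \subset \DD \cap \HH' = \DD \cap \HH \subset \HH$, which by Proposition~\ref{Prop-Def-Locally-Redundant} would yield $\HH \in \tau(\Gamma)$, contradicting what we just established. By symmetry, $\HH$ is active at $a'$.

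Now apply Proposition~\ref{prop:local_redundancy_hypergraph_characterization} at both $a$ and $a'$. Normalize the projective representatives so that $a' \leq a$ componentwise and set $K := \{i \in [n] \mid a_i = a'_i\}$ and $K' := \argmax_i (a_i - a'_i)$; these sets are both non-empty and disjoint precisely because $a \neq a'$ projectively, which is where the distinct-apex hypothesis enters. The hyperarc contributed by $\HH$ to $\GG(\{\HH\} \cup \Gamma, a')$ is $(K \cap I,\, K \cap \compl{I})$, while the hyperarc contributed by $\HH'$ to $\GG(\{\HH'\} \cup \Gamma, a)$ is $(K' \cap I',\, K' \cap \compl{I'})$. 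The hypotheses then translate into: $[n]$ is reachable from $I'$ in $\GG(\Gamma, a')$ only after adjoining the first hyperarc, and by the contradiction hypothesis the same holds at $a$ with the second hyperarc.

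The main work — and the step I expect to be the chief obstacle — is to derive a contradiction from these two reachability conditions. My plan is to translate them back into validity of inequalities via Proposition~\ref{prop:tangent_hypergraph}: the inequality $\max_{i \in I'} y_i \geq \max_{j \in \compl{I'}} y_j$ is valid on $\tangent(\DD, a') \cap \tangent(\HH, a')$ but not on $\tangent(\DD, a')$, and symmetrically at $a$. From the non-validity at $a'$, I pick $y \in \tangent(\DD, a')$ violating the inequality and use Proposition~\ref{prop:tangent_cone_exactness} to lift it to a point $x = a' + t y \in \DD$ for small $t > 0$ that lies outside $\HH'$ (and therefore, by the equality, outside $\HH$); controlling the argmax structure of $x - a$ via the fact that $a' - a$ is zero on $K$ and strictly negative elsewhere should then show that any hyperpath at $a$ witnessing reachability from $I$ to $[n]$ via the added arc $(K' \cap I',\, K' \cap \compl{I'})$ can be mimicked without it — contradicting the non-redundancy of $\HH$ in $\Gamma$. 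The delicate point is the combinatorial case analysis of how the partitions $I/\compl{I}$ and $I'/\compl{I'}$ interact with the disjoint sets $K$ and $K'$, and it is in this bookkeeping that the distinct-apex hypothesis ($K \cap K' = \emptyset$) plays its decisive role.
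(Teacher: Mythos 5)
Your setup is correct and closely mirrors the paper's: you translate both redundancy hypotheses into reachability statements via Proposition~\ref{prop:local_redundancy_hypergraph_characterization}, you correctly establish that $\HH'$ is active at $\apex(\HH)$ (and vice versa), and your normalization with $K$ and $K'$ is exactly the argmin/argmax decomposition the paper exploits (with disjointness encoding that the apices differ). The observation $\DD \cap \HH = \DD \cap \HH'$ is a clean reformulation of the symmetric assumption, and the identification of the hyperarcs $(K\cap I, K\cap\compl{I})$ and $(K'\cap I', K'\cap\compl{I'})$ is right.

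However, the proof is not completed, and the place you explicitly flag as ``the chief obstacle'' is indeed where the argument fails as sketched. Your proposed route --- choosing $y \in \tangent(\DD,a')$ violating the inequality, lifting to a nearby point $x \in \DD$, and then using $x$ to ``mimic'' a hyperpath in the tangent hypergraph at the \emph{other} apex $a$ --- has no clear mechanism for crossing between the two tangent structures: a point $x$ close to $a'$ gives no local information about $\DD$ near $a$, and the tangent hypergraph $\GG(\Gamma,a)$ is entirely determined by the half-spaces active at $a$. The paper bridges the two apices with a different device: Lemma~\ref{lemma:argmaxB}. Applied at $a$ to the subset $P' \subset \compl{I}$ (which is constructed to lie in $\argmax(\vectinverse{a}\,b)$ with $b = \apex(\HH')$), it produces a half-space $\HH''$ in $\{\HH'\}\cup\Gamma$ active at $a$ with precise argmax constraints relative to $P'$. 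The paper then shows $\HH'' \neq \HH'$ (this is precisely where the distinct-apex hypothesis bites: otherwise $\vectinverse{a_i} b_i$ would simultaneously be the minimum and maximum of the coordinates of $\vectinverse{a}\,b$, forcing $a = b$), hence $\HH'' \in \Gamma$, and finally uses the argmax conditions of $\HH''$ at both $a$ and $b$ to deduce that some $i \in P'$ is reachable from $J$ in $\GG(\Gamma,b)$ --- contradicting $P' \subset P = \compl{R}$. This back-and-forth, which hinges on the set $P'$ being simultaneously controlled at both apices via $\argmax(\vectinverse{a}\,b)$, is the missing idea; the lifting-and-mimicking strategy you sketch does not supply it, and is not a workable substitute as written.
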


To prove this theorem, we shall use the following lemma:

\begin{lemma}\label{lemma:argmaxB}
Let $\Gamma \subset \Sigma $ be a finite set of half-spaces and 
$\HH(a,I) \in \tau (\Gamma )$. 
Then, for each non-empty subset $P$ of $\compl{I}$ there exists a half-space 
$\HH(b,J)$ in $\Gamma $ such that 
\[
\vectinverse{b_J} a = \vectinverse{b_{\compl{J}}} a 
,\;  
\argmax(\vectinverse{b_{J}} a) \cap P = \emptyset , \;
\makebox{and } \argmax(\vectinverse{b_{\compl{J}}} a) \cap P \neq \emptyset\; . 
\]
\end{lemma}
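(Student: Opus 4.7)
The plan is to translate the hypothesis $\HH(a,I) \in \tau(\Gamma)$ into a reachability statement via Proposition~\ref{prop:local_redundancy_hypergraph_characterization}, and then extract the desired half-space by tracking the ``first step'' in a breadth-first exploration of the tangent hypergraph at $a$.

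First, I would verify the hypothesis of Proposition~\ref{prop:local_redundancy_hypergraph_characterization}: since $\HH(a,I) \in \tau(\Gamma) \subset \Sigma$, the apex satisfies $a \in \CC$, while each $\HH' \in \Gamma \subset \Sigma$ satisfies $\CC \subset \HH'$, so that $a \in \HH'$ for every $\HH' \in \Gamma$. Applying that proposition to $\HH(a,I)$, which is by assumption redundant with respect to $\Gamma$, yields that $[n]$ is reachable from $I$ in the tangent directed hypergraph $\GG(\Gamma,a)$.

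Next, I would fix a non-empty subset $P \subset \compl{I}$ and inductively define $R_0 \mydef I$ and
\[
R_{k+1} \mydef R_k \cup \bigl\{ j \in [n] \mid \exists\, (T,H) \in \GG(\Gamma,a) \text{ with } j \in H \text{ and } T \subset R_k \bigr\}\;.
\]
The reachability of $[n]$ from $I$ gives $\bigcup_{k \geq 0} R_k = [n]$. Since $R_0 \cap P \subset I \cap \compl{I} = \emptyset$ but $P \subset \bigcup_k R_k$, there exists a minimal index $k \geq 1$ with $R_k \cap P \neq \emptyset$. Picking some $j^\star \in R_k \cap P$, the construction of $R_k$ provides a hyperarc $(M,N) \in \GG(\Gamma,a)$ with $j^\star \in N$ and $M \subset R_{k-1}$; the minimality of $k$ forces $M \cap P \subset R_{k-1} \cap P = \emptyset$, while $N \cap P$ contains $j^\star$ and is therefore non-empty.

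Finally, I would unpack Definition~\ref{def:tangent_hypergraph}: the hyperarc $(M,N)$ is associated with a half-space of $\Gamma$ active at $a$. Since $\Gamma \subset \Sigma$ consists only of non-degenerate half-spaces, this half-space has the form $\HH(b,J)$ for some $b \in \CC$ and some proper non-empty $J \subset [n]$, with $M = \argmax(\vectinverse{b_J}\, a)$ and $N = \argmax(\vectinverse{b_{\compl{J}}}\, a)$, and activity at $a$ means precisely $\vectinverse{b_J}\, a = \vectinverse{b_{\compl{J}}}\, a$. Hence $\HH(b,J)$ meets the three required conditions. I do not expect any substantial mathematical obstacle beyond the notational bookkeeping that matches the half-space presentation $\HH(b,J)$ with the tail/head labelling $(M,N)$ of its associated hyperarc; the entire content of the lemma is a one-step unwinding of the reachability criterion.
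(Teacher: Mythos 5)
Your proof is correct and follows essentially the same approach as the paper: both translate $\HH(a,I)\in\tau(\Gamma)$ into reachability of $[n]$ from $I$ in $\GG(\Gamma,a)$ via Proposition~\ref{prop:local_redundancy_hypergraph_characterization}, and then extract the first hyperarc whose head meets $P$ while its tail avoids $P$. The only cosmetic difference is that you phrase the extraction via BFS levels $R_k$, whereas the paper takes a hyperpath from $I$ to $P$ and selects the last $(T_k,H_k)$ preceding the first intersection with $P$; these are interchangeable.
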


\begin{proof} 
Since $\HH(a,I) \in \tau (\Gamma)$ and $a \in \CC \subset \HH'$ for all $\HH' \in \Gamma$, we know by Proposition~\ref{prop:local_redundancy_hypergraph_characterization} that any subset of $[n]$ is reachable from $I$ in the tangent directed hypergraph $\GG(\Gamma,a)$. In particular, $P$ is reachable from $I$, thus the hypergraph $\GG(\Gamma,a)$ 
must contain a hyperarc $(T, H)$ such that $H \cap P \neq \emptyset$ 
and $T \subset \compl{P}$ (given a hyperpath $(T_1, H_1),\ldots ,(T_q, H_q)$ from $I$ to $P$, it suffices to set $(T,H)=(T_k, H_k)$, where $k \geq 1$ is the greatest integer such that $P \cap (\cup_{l = 0}^{k-1} H_l) = \emptyset$, recalling that $H_0=I$). By definition, this hyperarc is associated with a half-space $\HH(b,J)$ in $\Gamma$ active at $a$, meaning that 
\[
\vectinverse{b_J} a = \vectinverse{b_{\compl{J}}} a, \; T = \argmax(\vectinverse{b_{J}} a), \; \text{and} \; H = \argmax(\vectinverse{b_{\compl{J}}} a) \;.
\]
This provides the expected result.
\end{proof}

\begin{proof}[\proofname{} (Theorem~\ref{prop:partial-anti-exchange})]
Let $a \mydef \apex(\HH)$, $I \mydef \sect(\HH)$, $b \mydef \apex(\HH')$, 
and $J \mydef \sect(\HH')$. 

Since $\apex(\HH') \in \CC$, $\HH' \not \in \tau(\Gamma)$, and $\HH' \in \tau(\Gamma \cup \{\HH\})$, by Proposition~\ref{prop:local_redundancy_hypergraph_characterization} the set $[n]$ is reachable from $J$ in the hypergraph $\GG(\Gamma \cup \{\HH\},b)$, while it is not in the hypergraph $\GG(\Gamma ,b)$. Consequently, the two hypergraphs are not equal, which proves that the half-space $\HH$ necessarily provides a hyperarc in the hypergraph $\GG(\Gamma \cup \{\HH\},b)$, \ie~$\HH$ is active at $b$. More precisely, the hypergraph $\GG(\Gamma \cup \{\HH\},b)$ is obtained from $\GG(\Gamma,b)$ by adding the hyperarc $(M, N)$, where $M=\argmax(\vectinverse{a_I} b)$ and $N =\argmax(\vectinverse{a_{\compl{I}}} b)$. 

Let $R$ be the biggest subset of $[n]$ reachable from $J$ in $\GG(\Gamma,b)$. From the previous discussion, we have $R \subsetneq \oneto{n}$. Let $P$ be the complement of $R$ in $[n]$ (note that in particular $P \subset [n] \setminus J$ because $J\subset R$). As $[n]$ is reachable from $J$ in $\GG(\Gamma \cup \{\HH\},b)$, we necessarily have $N \cap P \neq \emptyset$ and $M \subset R$ (otherwise, the set $P$ would not be reachable from $J$ in $\GG(\Gamma \cup \{\HH\},b)$).  Hence, 
\begin{equation}\label{Relation-a-ap}
\vectinverse{a_I} b = \vectinverse{a_{\compl{I}}} b , \; 
\argmax(\vectinverse{a_I} b) \subset R,\; \text{and }
\argmax(\vectinverse{a_{\compl{I}}} b) \cap P \neq \emptyset \; . 
\end{equation}
Let $P' \mydef \argmax(\vectinverse{a_{\compl{I}}} b) \cap P$. 
As $\vectinverse{a_{\compl{I}}} b$ is equal to $\vectinverse{a_I} b$, it is also equal to $\vectinverse{a} b$, and so we have $P' \subset \argmax(\vectinverse{a} b) \cap (\compl{I})$.  

We shall prove that $\HH\not \in \tau (\{\HH'\}\cup \Gamma )$ 
by contradiction, so suppose that $\HH\in \tau (\{\HH'\}\cup \Gamma )$. 
Then, as $P' \subset {\compl{I}}$, by Lemma~\ref{lemma:argmaxB} 
we know that there exists a half-space 
$\HH''$ in $\{\HH'\}\cup \Gamma $, with apex $c$ and sectors $K$, such that 
\begin{equation}\label{Relation-a-app}
\vectinverse{c_{K}} a = \vectinverse{c_{\compl{K}}} a ,\; 
\argmax(\vectinverse{c_{K}} a) \cap P' = \emptyset,\; 
\text{and } \argmax(\vectinverse{c_{\compl{K}}} a) \cap P' \neq \emptyset \; . 
\end{equation}
Consider an arbitrary element 
$i \in \argmax(\vectinverse{c_{\compl{K}}} a) \cap P'$. Since $\vectinverse{c_{\compl{K}}} a = \vectinverse{c} a$, 
we have $i \in \argmax(\vectinverse{c} a) \cap (\compl{K})$.

Suppose that the half-space $\HH''$ coincides with 
$\HH'$, and so in particular $c=b$.  
Then, since $i \in \argmax(\vectinverse{c} a)=\argmax(\vectinverse{b} a)$, 
$\vectinverse{a_i} b_i$ is the minimum of 
$\vectinverse{a_h} b_h$ for $h \in \oneto{n}$. 
But as $i \in P'\subset \argmax(\vectinverse{a} b)$, 
$\vectinverse{a_i} b_i$ is also the maximum of 
$\vectinverse{a_h} b_h$ for $h \in \oneto{n}$.  
This is impossible unless $a$ and $b$ are identical (as elements of $\projspace$). As a consequence, the half-space $\HH''$ necessarily belongs to $\Gamma$.  

Now, since $i\in \argmax(\vectinverse{c} a)$ and 
$i \in P' \subset \argmax(\vectinverse{a} b)$, 
we have 
\[
\vectinverse{c_i} b_i = 
\vectinverse{c_i} a_i \vectinverse{a_i} b_i \geq 
\vectinverse{c_h} a_h \vectinverse{a_h} b_h = 
\vectinverse{c_h} b_h \quad \text{for any} \ h \in \oneto{n},
\] 
and thus $i \in \argmax(\vectinverse{c} b)$. Then, 
as $i \in \compl{K}$, we conclude that  
\[
i \in \argmax(\vectinverse{c_{\compl{K}}} b) \; 
\text{ and }\; \vectinverse{c_{K}} b = \vectinverse{c_{\compl{K}}} b \;,
\] 
because $\vectinverse{c_{K}} b \geq \vectinverse{c_{\compl{K}}} b$
according to the fact that $b\in \CC \subset \HH''$. 

Observe that for all $j \in K$, 
\[
(\vectinverse{c_{K}} a) (\vectinverse{a} b) \geq (\vectinverse{c_j} a_j) (\vectinverse{a_j} b_j) = \vectinverse{c_j} b_j \; .
\]
Moreover, the bound $(\vectinverse{c_{K}} a) (\vectinverse{a} b)$ is the maximum of 
$\vectinverse{c_j} b_j$ for $j \in K$. Indeed, 
\[
\mpplus_{j \in K} \vectinverse{c_j} b_j = \vectinverse{c_{K}} b = \vectinverse{c_{\compl{K}}} b = 
\vectinverse{c_i} b_i = (\vectinverse{c_i} a_i) (\vectinverse{a_i} b_i) \;,
\]
and since 
$i \in \argmax(\vectinverse{c_{\compl{K}}} a) \cap \argmax(\vectinverse{a} b)$, we 
have $\vectinverse{c_i} a_i =\vectinverse{c_{\compl{K}}} a =\vectinverse{c_{K}} a$ 
and $\vectinverse{a_i} b_i = \vectinverse{a} b$. It follows that 
\[
\argmax(\vectinverse{c_{K}} b)=\argmax (\vectinverse{c_{K}} a)\cap \argmax(\vectinverse{a} b) \; .
\] 

We are now going to show that $\argmax(\vectinverse{c_{K}} b)\subset R$. 
Given $h \in \argmax(\vectinverse{c_{K}} b)$, 
we either have $h \in \argmax(\vectinverse{a_{\compl{I}}} b)$ or 
$h \in \argmax(\vectinverse{a_I} b)$. In the latter case, 
by~\eqref{Relation-a-ap} we have 
$h\in \argmax(\vectinverse{a_I} b) \subset R$.   
Assume now that $h \in \argmax(\vectinverse{a_{\compl{I}}} b)$. 
Then, since 
\[
\argmax(\vectinverse{c_{K}} b) \cap \argmax(\vectinverse{a_{\compl{I}}} b) \cap P \subset \argmax(\vectinverse{c_{K}} a) \cap P' = \emptyset \;,
\]
it follows that $h\not \in P$, \ie{}\ $h\in R$. 
As a consequence, $\argmax(\vectinverse{c_{K}} b) \subset R$. 

Finally, since $\HH'' \in \Gamma$, and 
\[
\vectinverse{c_{K}} b = \vectinverse{c_{\compl{K}}} b, \  
\argmax(\vectinverse{c_{K}} b) \subset R \ \text{and} \  
i \in \argmax (\vectinverse{c_{\compl{K}}} b) \;, 
\]
we conclude that node $i$ is reachable from $J$ in the hypergraph $\GG(\Gamma,b)$, \ie{}\ $i \in R$. This contradicts the fact that $i \in P$, 
and completes the proof of the theorem.
\end{proof}

We shall need the following corollary of the partial anti-exchange property. 

\begin{corollary}\label{Coro:partial-anti-exchange} 
Let $\Gamma_1,\Gamma_2\subset \Sigma $ be two finite sets of half-spaces, and $\HH \in \Sigma $ be such that 
$\apex(\HH) \neq \apex(\HH')$ for all $\HH' \in \Gamma_2$.
If $\tau(\{\HH \} \cup \Gamma_1) = \tau(\Gamma_2)$, then $\HH \in \tau (\Gamma_1)$.    
\end{corollary}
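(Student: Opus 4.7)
My plan is to argue by contradiction. Suppose $\HH \notin \tau(\Gamma_1)$. The idea is to enlarge $\Gamma_1$ step by step by absorbing elements of $\Gamma_2$ one at a time, invoking Theorem~\ref{prop:partial-anti-exchange} at each iteration to preserve the non-redundancy of $\HH$. Because $\Gamma_2$ is finite, the process must eventually terminate and force a contradiction.

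More precisely, I would set $\Gamma_1^{(0)} \mydef \Gamma_1$ and inductively build a chain $\Gamma_1^{(0)} \subset \Gamma_1^{(1)} \subset \cdots$ so as to maintain two invariants: (i) $\HH \notin \tau(\Gamma_1^{(k)})$, and (ii) $\tau(\{\HH\} \cup \Gamma_1^{(k)}) = \tau(\Gamma_2)$. Both hold for $k = 0$ by assumption. At step $k$, if every element of $\Gamma_2$ already lies in $\tau(\Gamma_1^{(k)})$, then by the monotonicity and idempotence of $\tau$, invariant (ii) gives $\HH \in \tau(\{\HH\} \cup \Gamma_1^{(k)}) = \tau(\Gamma_2) \subset \tau(\Gamma_1^{(k)})$, contradicting (i). Otherwise, I pick some $\HH^{(k+1)} \in \Gamma_2 \setminus \tau(\Gamma_1^{(k)})$. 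Invariant (ii) shows that $\HH^{(k+1)} \in \tau(\{\HH\} \cup \Gamma_1^{(k)})$, and by hypothesis $\apex(\HH^{(k+1)}) \neq \apex(\HH)$; Theorem~\ref{prop:partial-anti-exchange} then yields $\HH \notin \tau(\{\HH^{(k+1)}\} \cup \Gamma_1^{(k)})$. I define $\Gamma_1^{(k+1)} \mydef \{\HH^{(k+1)}\} \cup \Gamma_1^{(k)}$: invariant (i) is immediate, and invariant (ii) follows from the general fact that enlarging a set by a half-space already contained in its $\tau$-closure does not change that closure (a direct consequence of the closure-operator properties of $\tau$).

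Termination is automatic: each chosen $\HH^{(k+1)}$ lies outside $\tau(\Gamma_1^{(k)})$, whereas all previously chosen $\HH^{(j)}$ for $j \leq k$ belong to $\Gamma_1^{(k)} \subset \tau(\Gamma_1^{(k)})$, so the picks are pairwise distinct elements of the finite set $\Gamma_2$. After at most $|\Gamma_2|$ iterations the stopping condition must trigger, producing the desired contradiction. The main subtlety I anticipate is verifying that invariant (ii) is preserved under each enlargement; once this bookkeeping is in place, the argument is just an iterated application of the partial anti-exchange property, in which the freshly absorbed half-space from $\Gamma_2$ plays the role of $\HH'$ in Theorem~\ref{prop:partial-anti-exchange}.
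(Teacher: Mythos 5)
Your proof is correct and uses essentially the same mechanism as the paper's: a finite iteration of Theorem~\ref{prop:partial-anti-exchange} driven by the finiteness of $\Gamma_2$. The only difference is organizational — you argue by contradiction and grow $\Gamma_1$ by absorbing one half-space from $\Gamma_2$ at a time (maintaining the two invariants explicitly), whereas the paper argues directly, peeling half-spaces off $\Gamma_2$ and passing to the contrapositive of the theorem to conclude $\HH \in \tau(\Gamma_1 \cup \Gamma_2')$ at each step; your version applies the theorem in its stated form rather than its contrapositive, which makes the bookkeeping a bit more transparent.
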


\begin{proof} 
Let $\HH'$ be any half-space in $\Gamma_2$ and define 
$\Gamma'_2\mydef \Gamma_2\setminus \{\HH'\}$. 
Note that:
\[
\HH' \in \tau (\Gamma_2) = \tau(\{\HH \}\cup \Gamma_1) \subset 
\tau(\{\HH \} \cup \Gamma_1 \cup \Gamma'_2) \; ,
\]
and
\[
\HH \in \tau (\{\HH\} \cup \Gamma_1) = \tau (\Gamma_2) \subset 
\tau (\{\HH'\} \cup \Gamma_1 \cup \Gamma'_2) \;.
\]
Since $\HH$ and $\HH'$ have distinct apices, we conclude by Theorem~\ref{prop:partial-anti-exchange} that 
$\HH \in \tau (\Gamma_1\cup \Gamma'_2)$.
If the set $\Gamma'_2$ is non-empty, we can repeat the same argument by choosing a new half-space $\HH''$ in $\Gamma'_2$. Since $\Gamma_2$ is a finite set, this completes the proof. 
\end{proof}

\subsection{Apices of non-redundant external representations}\label{subsec:non_redundant_apices}

Note that the boundary $\mcB$ of $\CC$ is precisely the set of apices of half-spaces in $\Sigma$: 

\begin{lemma}
We have $\mcB = \{ \apex(\HH) \mid \HH \in \Sigma \}$.
\end{lemma}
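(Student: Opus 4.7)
The plan is to prove the two set inclusions separately. The inclusion $\{\apex(\HH) : \HH \in \Sigma\} \subset \mcB$ is the easy direction, since the apex of a non-degenerate half-space always lies on the topological boundary of the half-space itself. The reverse inclusion is where the real work lies.

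For the easy direction, I would take any $\HH = \HH(a, I) \in \Sigma$ and show that $a$ cannot lie in $\mathrm{int}(\CC)$. Since $I$ is a proper subset of $[n]$, I would pick $j \in \compl{I}$ and, for arbitrarily small $\delta > 0$, consider the perturbation $x^\delta$ of $a$ in $\projspace$ obtained by increasing only the $j$-th coordinate by $\delta$. Then $\argmax_i(x^\delta_i - a_i) = \{j\}$, so $x^\delta \notin \HH(a,I)$ and hence $x^\delta \notin \CC$. Since $d_H(x^\delta, a) = \delta$, this witnesses $a \in \mcB$.

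For the reverse inclusion, given $a \in \mcB$ I must produce a non-empty proper $I \subsetneq [n]$ with $\CC \subset \HH(a, I)$. Setting $M(x) := \argmax_i(x_i - a_i)$, the condition $\CC \subset \HH(a,I)$ is equivalent to $M(x) \cap I \neq \emptyset$ for every $x \in \CC$. It therefore suffices to exhibit some index $j$ such that no $x \in \CC$ has $M(x) = \{j\}$: one can then take $I := [n] \setminus \{j\}$, since $\compl{I} = \{j\}$ contains no non-empty $M(x)$. I would prove the existence of such $j$ by contradiction: assume that for every $j \in [n]$ there is $x^j \in \CC$ with $M(x^j) = \{j\}$, and deduce that $a \in \mathrm{int}(\CC)$. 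Tropically rescaling so that $x^j_j = a_j$, the strict inequalities $x^j_k < a_k$ for $k \neq j$ allow us to define $\alpha := \min_j \min_{k \neq j}(a_k - x^j_k) > 0$. For any $b \in \projspace$ with $d_H(b, a) < \alpha$, normalize so that $\epsilon_i := b_i - a_i$ satisfies $\max_i \epsilon_i - \min_i \epsilon_i < \alpha$, and verify that $b = \mpplus_{j \in [n]} \epsilon_j x^j$: the $i$-th coordinate of the right-hand side equals $b_i$, achieved at the term $j = i$, while for $j \neq i$ one has $\epsilon_j + x^j_i \leq \epsilon_j + a_i - \alpha < \epsilon_i + a_i = b_i$ using $\epsilon_j - \epsilon_i < \alpha$. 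This exhibits a Hilbert ball around $a$ lying in $\CC$, contradicting $a \in \mcB$.

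The main obstacle will be the backward direction, and specifically the construction of the explicit tropical combination in the last step. The verification of $b = \mpplus_j \epsilon_j x^j$ is routine once the formula is guessed, but the geometric insight is that the rescaled vectors $x^j$ fan out from $a$ into the $n$ distinct sectors of the hyperplane with apex $a$, so their tropical span covers a full Hilbert neighborhood of $a$.
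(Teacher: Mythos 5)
Your proposal is correct, and the overall strategy matches the paper's proof. The forward inclusion in both cases hinges on the observation that no neighborhood of the apex lies inside the half-space; the backward inclusion is in both cases a contrapositive/contradiction argument: if for every $j$ there is $x^j \in \CC$ lying strictly in the $j$-th open sector of the hyperplane with apex $a$, then a Hilbert ball around $a$ lies in $\CC$. The only superficial difference is in how that ball is exhibited: the paper first tames each $x^j$ into a vector $y^j = a \mpplus \epsilon x^j$ that agrees with $a$ except for a strictly larger $j$-th coordinate and then spans those, whereas you normalize the $x^j$'s directly so that $x^j_j = a_j$ and $x^j_k \leq a_k - \alpha$ for $k \neq j$, and write $b = \mpplus_j \epsilon_j x^j$. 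These are the same construction viewed under two different projective normalizations, so this is not a genuinely different route.
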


\begin{proof}
Since no neighborhood of $\apex(\HH)$ is contained in $\HH$ for any half-space $\HH$, it readily follows that the apex of any half-space in $\Sigma$ does not belong to the interior of $\CC$. 

Conversely, consider $a \in \mcB$, and assume $\CC$ is not contained in any half-space with apex $a$. Then, for each $i \in \oneto{n}$ there exists $x^i \in \CC$ such that $\vectinverse{a_i} x^i_i > \vectinverse{a_{\compl{\{i\}}}} x^i$. Let $\epsilon \in \R$ be such that $\vectinverse{a_i} x^i_i > \mpinverse{\epsilon} \geq \vectinverse{a_{\compl{\{i\}}}} x^i$, and define $y^i \mydef a \mpplus \epsilon x^i$. Thus, $y^i$ satisfies $y^i_i > a_i$ and $y^i_j = a_j$ for all $j \neq i$. Now consider the cone $\NN$ generated by the vectors $y^i$ for $i \in \oneto{n}$. This cone forms a neighborhood of $a$ (it contains the Hilbert ball of center $a$ and radius $\rho = \min_{i \in \oneto{n}} \vectinverse{a_i} y^i_i > 0$). Besides, $\NN \subset \CC$ since $y^i \in \CC$ for all $i \in [n]$. Hence, $a$ is in the interior of $\CC$, which is a contradiction.
\end{proof}

For each $a\in \mcB$, we denote by $\Sigma_a$ the set of half-spaces with apex $a$ which contain $\CC$, \ie
\[
\Sigma_a \mydef \{ \HH(a,I) \mid \CC \subset \HH(a,I) \} \;.
\]
Obviously, $\Sigma=\cup_{a \in \mcB} \Sigma_a$. Now, define the function $\tau' : 2^{\mcB} \rightarrow 2^{\mcB}$ by 
\[
\tau'(X)\mydef \{a\in \mcB  \mid \Sigma_{a}\subset  
\tau( \cup_{b\in X} \Sigma_b ) \} \; .  
\]
Then, as in the case of $\tau$, we have: 

\begin{proposition}
The function $\tau'$ is a closure operator on $\mcB$.
\end{proposition}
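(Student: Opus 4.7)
The plan is to verify the four defining properties of a closure operator directly from the analogous properties of $\tau$, using the definition $\tau'(X) = \{ a \in \mcB \mid \Sigma_a \subset \tau(\cup_{b \in X} \Sigma_b) \}$.

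For property~\ref{item:i}, I would argue $\tau'(\emptyset) = \emptyset$ as follows. By definition, $\tau'(\emptyset) = \{ a \in \mcB \mid \Sigma_a \subset \tau(\emptyset)\}$, and by property~\ref{item:i} for $\tau$, this is $\{ a \in \mcB \mid \Sigma_a = \emptyset\}$. However, the preceding lemma characterizes $\mcB$ as $\{ \apex(\HH) \mid \HH \in \Sigma\}$, so for every $a \in \mcB$ there is at least one half-space in $\Sigma_a$, forcing $\Sigma_a \neq \emptyset$. Hence $\tau'(\emptyset) = \emptyset$.

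Properties~\ref{item:ii} and~\ref{item:iii} are essentially immediate. For~\ref{item:ii}, if $a \in X$ then $\Sigma_a \subset \cup_{b \in X} \Sigma_b \subset \tau(\cup_{b \in X} \Sigma_b)$ by property~\ref{item:ii} of $\tau$, so $a \in \tau'(X)$. For~\ref{item:iii}, if $X \subset Y$ then $\cup_{b \in X} \Sigma_b \subset \cup_{b \in Y} \Sigma_b$, and property~\ref{item:iii} of $\tau$ yields $\tau(\cup_{b \in X} \Sigma_b) \subset \tau(\cup_{b \in Y} \Sigma_b)$; plugging this into the definition of $\tau'$ gives $\tau'(X) \subset \tau'(Y)$.

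The only mildly nontrivial step is~\ref{item:iv}. One inclusion $\tau'(X) \subset \tau'(\tau'(X))$ follows from~\ref{item:ii} applied to $\tau'$. For the converse, suppose $a \in \tau'(\tau'(X))$, so $\Sigma_a \subset \tau(\cup_{b \in \tau'(X)} \Sigma_b)$. By definition of $\tau'(X)$, every $b \in \tau'(X)$ satisfies $\Sigma_b \subset \tau(\cup_{c \in X} \Sigma_c)$, so $\cup_{b \in \tau'(X)} \Sigma_b \subset \tau(\cup_{c \in X} \Sigma_c)$. Applying monotonicity~\ref{item:iii} and then idempotence~\ref{item:iv} of $\tau$, we obtain
\[
\tau\bigl(\cup_{b \in \tau'(X)} \Sigma_b\bigr) \subset \tau\bigl(\tau(\cup_{c \in X} \Sigma_c)\bigr) = \tau\bigl(\cup_{c \in X} \Sigma_c\bigr),
\]
so $\Sigma_a \subset \tau(\cup_{c \in X} \Sigma_c)$, \ie{} $a \in \tau'(X)$. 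Since no step uses anything beyond the closure-operator axioms for $\tau$ (combined with the nonemptiness of $\Sigma_a$ for $a \in \mcB$ established just above), I expect no real obstacle; the proposition is a formal lifting of the closure structure of $\tau$ through the $a \mapsto \Sigma_a$ correspondence.
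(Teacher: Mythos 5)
Your proof is correct. Properties~\ref{item:i}--\ref{item:iii} are handled the same way as in the paper, and your observation that $\Sigma_a \neq \emptyset$ for all $a \in \mcB$ (via the preceding lemma) is indeed the point that makes $\tau'(\emptyset) = \emptyset$ go through; the paper leaves this implicit.

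For idempotence~\ref{item:iv}, your route differs from the paper's. You argue purely formally: from $a \in \tau'(\tau'(X))$ and the fact that $\Sigma_b \subset \tau(\cup_{c\in X}\Sigma_c)$ for all $b\in\tau'(X)$, monotonicity and idempotence of $\tau$ yield $\tau(\cup_{b\in\tau'(X)}\Sigma_b) \subset \tau(\cup_{c\in X}\Sigma_c)$, which gives $a\in\tau'(X)$. The paper instead unwinds the definition of $\tau$ geometrically: it introduces the cones $\DD = \cap_{\HH\in\Sigma_a,\,a\in X}\HH$ and $\DD' = \cap_{\HH\in\Sigma_a,\,a\in\tau'(X)}\HH$, shows $\DD = \DD'$, and then identifies $a\in\tau'(X)$ with ``$\DD\subset\HH$ for all $\HH\in\Sigma_a$'' (and similarly for $\tau'(\tau'(X))$ with $\DD'$). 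The two arguments carry the same content — the paper's equality $\DD=\DD'$ is exactly your containment chain expressed in terms of the underlying intersections — but your version is an abstract lifting that would work for \emph{any} closure operator $\tau$ partitioned by a map $a\mapsto\Sigma_a$, whereas the paper's version exposes the concrete geometric fact that $X$ and $\tau'(X)$ cut out the same cone. Your version is shorter and more transparent logically; the paper's version makes the underlying geometry visible and foreshadows later arguments that reason about the cones $\DD$ directly.
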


\begin{proof}
First, $\tau'(\emptyset) = \emptyset$, 
as a consequence of the fact that $\tau(\emptyset) = \emptyset$. 
Similarly, for any $X \in 2^{\mcB }$, we have $X \subset \tau'(X)$ because 
$\Sigma_a \subset \tau(\cup_{b\in X} \Sigma_b)$ for $a\in X$. 

Besides, for $X, Y \in 2^{\mcB }$, we have 
\[
X \subset Y\implies \cup_{a\in X} \Sigma_a \subset \cup_{a\in Y} \Sigma_a 
\implies \tau(\cup_{a\in X} \Sigma_a) \subset \tau(\cup_{a\in Y} \Sigma_a) 
\implies \tau'(X) \subset \tau'(Y) \; .
\]

Finally, let us show that $\tau'(\tau'(X)) = \tau'(X)$ for all 
$X \in 2^{\mcB }$. If we define 
\[
\DD \mydef \cap_{\HH \in \Sigma_a,a \in X} \HH \quad \text{and} \quad \DD' \mydef \cap_{\HH \in \Sigma_a, a \in \tau'(X)} \HH \;,
\]
then $\DD' \subset \DD$ because $X \subset \tau'(X)$.  
Moreover, for any $\HH \in \cup_{a\in \tau'(X)} \Sigma_a$, 
we have $\DD \subset \HH$ and thus $\DD \subset \DD'$. 
Therefore, we conclude that $\DD = \DD'$. Note that $a \in \tau'(X)$ if, and only if, $\DD \subset \HH$ for all $\HH \in \Sigma_a$. Similarly, $a \in \tau'(\tau'(X))$ is equivalent to $\DD' \subset \HH$ for all $\HH \in \Sigma_a$. This implies $\tau'(\tau'(X)) = \tau'(X)$. 
\end{proof}

Unlike $\tau$, the closure operator $\tau'$ 
satisfies the anti-exchange property. 

\begin{proposition}
Let $X$ be a finite subset of $\mcB$, and $a, b \in \mcB$ two distinct elements of $\projspace$. Then, $b \not \in \tau'(X)$ and $b \in \tau'(X \cup \{a\})$ 
imply $a \not \in \tau'(X \cup \{b\})$.
\end{proposition}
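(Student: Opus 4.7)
The plan is to argue by contradiction, iteratively applying the partial anti-exchange property (Theorem~\ref{prop:partial-anti-exchange}) to produce an infinite sequence of distinct half-spaces drawn from the finite set $\Sigma_a \cup \Sigma_b$.

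Suppose, for contradiction, that $a \in \tau'(X \cup \{b\})$, and set $\Gamma \mydef \bigcup_{c \in X} \Sigma_c$. Unfolding the definitions of $\tau'$ yields three key facts: (A) $\Sigma_b \subset \tau(\Sigma_a \cup \Gamma)$, (B) $\Sigma_a \subset \tau(\Sigma_b \cup \Gamma)$, and (C) there exists $\HH_b \in \Sigma_b$ with $\HH_b \notin \tau(\Gamma)$. A preliminary observation is that $a, b \notin X$ (otherwise $\tau'(X \cup \{a\})$ would coincide with $\tau'(X)$, contradicting $b \notin \tau'(X)$), so the sets $\Sigma_a$, $\Sigma_b$, and $\Gamma$ are pairwise disjoint.

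The core construction is an inductive sequence $K_0, K_1, K_2, \ldots$ of half-spaces alternating between $\Sigma_b$ and $\Sigma_a$, together with a non-decreasing sequence of finite sets $\Gamma = \Gamma_0 \subset \Gamma_1 \subset \cdots$ satisfying the invariants $K_k \notin \tau(\Gamma_k)$ and $\{K_0, \ldots, K_{k-1}\} \subset \Gamma_k$. Initialize $K_0 \mydef \HH_b$ and $\Gamma_0 \mydef \Gamma$. At step $k$, let $\Omega \in \{\Sigma_a, \Sigma_b\}$ denote the set opposite to $K_k$'s apex. By (A) or (B), together with $\Gamma \subset \Gamma_k$, we have $K_k \in \tau(\Omega \cup \Gamma_k)$, while the invariant gives $K_k \notin \tau(\Gamma_k)$. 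Hence there is a minimal non-empty $U_k \subset \Omega$ with $K_k \in \tau(U_k \cup \Gamma_k)$. Pick any $K_{k+1} \in U_k$: by minimality, $K_k \notin \tau((U_k \setminus \{K_{k+1}\}) \cup \Gamma_k)$, and Theorem~\ref{prop:partial-anti-exchange} applied to $K_{k+1}$ and $K_k$ (which have distinct apices) yields $K_{k+1} \notin \tau(\{K_k\} \cup (U_k \setminus \{K_{k+1}\}) \cup \Gamma_k)$. Set $\Gamma_{k+1}$ to be this last set; clearly $\Gamma_{k+1} \supset \Gamma_k \cup \{K_k\}$, so inductively $\{K_0, \ldots, K_k\} \subset \Gamma_{k+1}$.

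The finiteness argument is that the $K_i$'s are pairwise distinct: since $K_{k+1} \notin \tau(\Gamma_{k+1}) \supseteq \Gamma_{k+1} \supseteq \{K_0, \ldots, K_k\}$, the new element $K_{k+1}$ differs from all previous ones. The sequence $(K_i)$ therefore exhibits infinitely many distinct elements of the finite set $\Sigma_a \cup \Sigma_b$, a contradiction.

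I expect the main subtlety to be verifying, at each step, the precise hypotheses of Theorem~\ref{prop:partial-anti-exchange}: that $U_k$ is non-empty (so that $K_{k+1}$ exists), that $K_k$ and $K_{k+1}$ belong to different apex groups (ensuring distinct apices as required by the theorem), and that the minimality of $U_k$ supplies exactly the non-redundancy hypothesis $K_k \notin \tau((U_k \setminus \{K_{k+1}\}) \cup \Gamma_k)$ of the theorem. Once these invariants are maintained, the set manipulations propagate automatically and the finiteness of $\Sigma_a \cup \Sigma_b$ closes the argument.
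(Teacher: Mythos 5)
Your proof is correct, but takes a genuinely different route from the paper's. The paper's argument is shorter: it derives $\tau(\Gamma \cup \Sigma_a) = \tau(\Gamma \cup \Sigma_a \cup \Sigma_b) = \tau(\Gamma \cup \Sigma_b)$ directly from the two hypotheses and then applies Corollary~\ref{Coro:partial-anti-exchange} to the half-spaces of $\Sigma_b$ (whose apex $b$ is distinct from every apex occurring in $\Gamma \cup \Sigma_a$), concluding $\Sigma_b \subset \tau(\Gamma)$, that is $b \in \tau'(X)$, a contradiction. You avoid Corollary~\ref{Coro:partial-anti-exchange} and argue directly from Theorem~\ref{prop:partial-anti-exchange}, building an alternating zig-zag $K_0, K_1, \ldots$ of half-spaces in $\Sigma_b, \Sigma_a, \Sigma_b, \ldots$ with the invariants $K_k \notin \tau(\Gamma_k)$ and $\{K_0, \ldots, K_{k-1}\} \subset \Gamma_k$, so the $K_i$ are pairwise distinct, and then invoke finiteness of $\Sigma_a \cup \Sigma_b$ to reach the contradiction. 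Both proofs rest on the same anti-exchange engine, but they structure the iteration differently: Corollary~\ref{Coro:partial-anti-exchange} keeps the target half-space fixed and strips the right-hand set one element at a time, whereas your chain alternates apex groups and lets the background set grow, closer in spirit to a Steinitz-type exchange. The paper's route is more economical because it reuses an already-established corollary; yours is more self-contained and makes the exchange chain explicit, at the cost of extra bookkeeping. It would be worth stating explicitly that $\Sigma_a$ and $\Sigma_b$ are finite (each has at most $2^n - 2$ elements), since that is what bounds the chain's length.
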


\begin{proof}
Suppose that $a \in \tau'(X \cup \{b\})$. Then, 
if we define $\Gamma \mydef \cup_{c \in X} \Sigma_c$, 
we have 
\[
\tau(\Gamma \cup \Sigma_a) = \tau(\Gamma \cup \Sigma_a\cup \Sigma_b)= \tau(\Gamma \cup \Sigma_b) \;.
\]
Note that $b$ is 
distinct from the apices of half-spaces in 
$\Gamma = \cup_{c \in X} \Sigma_c$ 
because $b \not \in \tau'(X)$. Therefore, 
by successive applications of Corollary~\ref{Coro:partial-anti-exchange} 
to the half-spaces in $\Sigma_b$, we conclude that 
these half-spaces belong to $\tau(\Gamma )$. 
This implies $b \in \tau'(X)$, which is a contradiction.
\end{proof}

As a consequence of the previous two propositions, we obtain: 

\begin{corollary}
The pair $(\mcB ,\tau')$ is a convex geometry.
\end{corollary}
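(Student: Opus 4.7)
The plan is to observe that this corollary is essentially a direct consequence of the two propositions immediately preceding it, together with the standard definition of a convex geometry. Recall that a convex geometry is a pair $(E,\sigma)$ consisting of a ground set $E$ and a closure operator $\sigma \colon 2^E \to 2^E$ satisfying the anti-exchange property: for any subset $X \subset E$ and any two distinct elements $a, b \in E \setminus \sigma(X)$, the implication $b \in \sigma(X \cup \{a\}) \implies a \notin \sigma(X \cup \{b\})$ holds.

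The first of the two preceding propositions shows that $\tau'$ is a closure operator on $\mcB$, and the second shows that $\tau'$ satisfies the anti-exchange property as stated. Combining these two results gives the corollary immediately.

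The one point worth addressing explicitly is that the anti-exchange property was only stated for finite subsets $X$ of $\mcB$ in the second proposition, while the general definition of convex geometry usually applies to arbitrary subsets. However, this is either a matter of convention (many treatments of convex geometries work in the finite setting) or can be reduced to the finite case: if $b \in \tau'(X \cup \{a\})$ and $X$ is arbitrary, then by the way $\tau'$ is built from $\tau$ (through finite intersections of half-spaces containing $\CC$) we may replace $X$ by a suitable finite subset without affecting the membership relations, and then apply the anti-exchange proposition as stated. Since no subtle argument is needed beyond this bookkeeping, the proof is essentially a one-line invocation of the two propositions.
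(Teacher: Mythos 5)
Your proof is correct and matches the paper's approach, which simply invokes the two preceding propositions without further comment. Your caveat about the finite-versus-arbitrary subset convention is a fair point the paper glosses over; note, though, that the sketched reduction to a finite subset is itself nontrivial (it is not immediate that membership in $\tau'(X)$ for infinite $X$ is witnessed by some finite $X_0 \subset X$), which is precisely why the paper then proves directly that $\mcB$ admits a unique minimal finite spanning set rather than appealing to the general theory of (finite) convex geometries.
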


Recall that $X\subset \mcB$ is said to be a \myemph{spanning set} of $\mcB$ 
if $\tau'(X)= \mcB$. When the ground set $G$ of a convex 
geometry $(G,\tau')$ is finite, it is known that $G$ has a 
unique minimal spanning set, see for example~\cite{korte}.  
This minimal spanning set is composed of the \myemph{extreme elements} of $G$, 
which are the elements $a\in G$ such that 
$a\not \in \tau'(G \setminus \{ a\})$. 
Even if in our case the ground set $\mcB$ is infinite, 
we next show that it also admits a unique minimal finite spanning set.  

\begin{corollary}\label{Coro:Non-redundant-apices}
There exists a unique minimal finite subset 
$\mcA$ of $\mcB$ satisfying $\tau'(\mcA )= \mcB$.    
\end{corollary}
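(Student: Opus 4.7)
The plan is to combine the convex-geometry structure $(\mcB,\tau')$ furnished by the two preceding propositions with a finite spanning set of $\mcB$, and then run the standard extremal argument. Existence of such a finite spanning set follows from the tropical Minkowski-Weyl theorem together with the saturation process of Proposition~\ref{Prop-Saturation}: one can write $\CC = \cap_{\HH \in \Gamma_0} \HH$ for some finite $\Gamma_0 \subset \Sigma$, and the set $X_0 \mydef \{\apex(\HH) \mid \HH \in \Gamma_0\}$ is then a finite subset of $\mcB$ satisfying $\tau'(X_0) = \mcB$, because every $\HH \in \Sigma_a$ with $a \in \mcB$ contains $\CC$ and hence $\cap_{b \in X_0, \HH' \in \Sigma_b} \HH'$. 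Iteratively discarding any $a \in X_0$ whose removal preserves the spanning property (a process which terminates since $X_0$ is finite) produces an inclusion-minimal finite spanning set $\mcA$.

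The crux of uniqueness is the following finite-set lemma: for any finite $X \subset \mcB$, the set $\partial X \mydef \{a \in X \mid a \notin \tau'(X \setminus \{a\})\}$ is the unique inclusion-minimal subset $Y$ of $X$ satisfying $\tau'(Y) = \tau'(X)$. Granting this lemma, uniqueness of the minimal finite spanning set is immediate: given two such sets $\mcA$ and $\mcA'$, apply the lemma to $Z \mydef \mcA \cup \mcA'$; since $\mcA$ and $\mcA'$ are both inclusion-minimal among subsets of $Z$ with closure $\tau'(Z) = \mcB$, both must equal $\partial Z$ and therefore coincide.

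I would prove the lemma in two parts, the main obstacle being the use of anti-exchange in the first. First, $\tau'(\partial X) = \tau'(X)$ is obtained by induction on $|X \setminus \partial X|$: when there is some $a \in X \setminus \partial X$, the relation $a \in \tau'(X \setminus \{a\})$ forces $\tau'(X \setminus \{a\}) = \tau'(X)$ by the closure axioms, and the key sub-claim is that $\partial(X \setminus \{a\}) = \partial X$. The inclusion $\partial X \subset \partial(X \setminus \{a\})$ is immediate by monotonicity of $\tau'$; for the converse, if some $b \in \partial(X \setminus \{a\})$ failed to lie in $\partial X$, then combining $b \notin \tau'(X \setminus \{a,b\})$ with $b \in \tau'(X \setminus \{b\}) = \tau'((X \setminus \{a,b\}) \cup \{a\})$ would, by the anti-exchange property of $\tau'$, force $a \notin \tau'((X \setminus \{a,b\}) \cup \{b\}) = \tau'(X \setminus \{a\})$, contradicting the choice of $a$. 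Second, given a minimal $Y \subset X$ with $\tau'(Y) = \tau'(X)$, monotonicity first yields $\partial X \subset Y$, so any hypothetical $a \in Y \setminus \partial X$ would still leave $\partial X \subset Y \setminus \{a\}$ and hence $\tau'(Y \setminus \{a\}) \supset \tau'(\partial X) = \tau'(X) = \tau'(Y)$, contradicting the minimality of $Y$. Hence $Y = \partial X$, which finishes the proof.
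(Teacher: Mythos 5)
Your proof is correct, and it takes a genuinely different route from the paper's. The paper proves uniqueness by dipping back down to the operator $\tau$: given two distinct minimal finite spanning sets $X$ and $Y$ and some $a \in X \setminus Y$, it applies Corollary~\ref{Coro:partial-anti-exchange} repeatedly to the half-spaces in $\Sigma_a$ to show directly that $\Sigma_a \subset \tau(\cup_{b \in X \setminus \{a\}} \Sigma_b)$, whence $X \setminus \{a\}$ is still spanning, contradicting minimality. You instead stay entirely at the level of $\tau'$ and derive a general ``finite-set lemma'' (for any finite $X \subset \mcB$, the set $\partial X$ of elements $a$ with $a \notin \tau'(X \setminus \{a\})$ is the unique inclusion-minimal subset of $X$ with the same closure), using only the closure-operator axioms and the anti-exchange property of $\tau'$; applying this to $Z = \mcA \cup \mcA'$ gives the result. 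In effect, you re-prove the standard convex-geometry fact that the paper alludes to for finite ground sets (citing Korte--Lov\'asz--Schrader) and then observes cannot be invoked directly because $\mcB$ is infinite — your lemma is the right finitary surrogate. What your route buys is a self-contained, axiom-level argument that isolates exactly what is needed from the convex geometry structure and would apply verbatim to any closure operator with anti-exchange on an infinite ground set admitting a finite spanning set; what the paper's route buys is brevity, since Corollary~\ref{Coro:partial-anti-exchange} is already at hand and does the work in one stroke without needing to set up $\partial X$ or the induction.
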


\begin{proof}
In the first place, observe that there exists a finite spanning set $X$ of $\mcB$. Indeed, as $\CC $ is a real polyhedral cone, there exists a finite set of half-spaces $\Gamma $, whose apices belong to $\CC$, such that $\CC=\cap_{\HH \in \Gamma} \HH$, see Section~\ref{subsec:saturation}. 
Then, we have $\tau'(\{\apex(\HH)\mid \HH \in \Gamma\})= \mcB$.

Assume now that $X$ and $Y$ are two distinct minimal finite spanning sets of $\mcB$, and let $a \in X \setminus Y$.
Let $\Gamma_1\mydef \cup_{b\in X\setminus \{ a\}} \Sigma_{b}$ and 
$\Gamma_2 \mydef  \cup_{b\in Y} \Sigma_{b}$. Then, 
since $\tau'(X)= \tau'(Y)=\mcB$, we have 
\[
\tau (\Sigma_a \cup \Gamma_1)= \tau (\Gamma_2)=\Sigma \;.
\] 
Now, as $a \not \in Y$, 
we can repeatedly apply Corollary~\ref{Coro:partial-anti-exchange} 
to the half-spaces in $\Sigma_a$ to conclude that 
$\Sigma_a\subset \tau (\Gamma_1)$, 
and so 
\[
\tau (\Gamma_1)= \tau (\Sigma_a \cup \Gamma_1)=\Sigma \;.
\] 
Therefore, 
$\tau'(X\setminus \{a\})=\mcB$ contradicting the fact that $X$ 
is a minimal spanning set of $\mcB$.  
\end{proof}

We can now establish the main theorem of this subsection, which shows that
the set $\mcA$ precisely characterizes the apices of the half-spaces in 
any finite non-redundant external representation of the cone $\CC$. As indicated in the introduction, such apices will be referred to as \myemph{non-redundant apices}.

\begin{theorem}\label{th:non_redundant_apices}
Let $\Gamma $ be any non-redundant external representation of 
$\CC$ (composed of finitely many half-spaces with apices in $\CC$). 
Then, $\mcA = \{ \apex(\HH) \mid \HH \in \Gamma\}$.  
\end{theorem}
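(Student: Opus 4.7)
The plan is to identify the set $X \mydef \{\apex(\HH) \mid \HH \in \Gamma\}$ with $\mcA$ by invoking the uniqueness statement in Corollary~\ref{Coro:Non-redundant-apices}. This reduces to showing that $X$ is a finite spanning set of $\mcB$ with respect to $\tau'$, and that no proper subset of $X$ remains a spanning set.

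First, the spanning property is essentially immediate. Since $\Gamma$ is an external representation, $\tau(\Gamma) = \Sigma$, and $\Gamma$ is contained in $\cup_{b \in X} \Sigma_b$, monotonicity of $\tau$ gives $\tau(\cup_{b \in X} \Sigma_b) = \Sigma$. Hence for every $a \in \mcB$, $\Sigma_a \subset \Sigma = \tau(\cup_{b \in X} \Sigma_b)$, which means $a \in \tau'(X)$, so $\tau'(X) = \mcB$. Note also that $X$ is finite because $\Gamma$ is.

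The substantive part will be minimality. I plan to argue by contradiction: suppose there is $a \in X$ with $\tau'(X \setminus \{a\}) = \mcB$, so that $\Sigma_a \subset \tau(\Lambda)$ where $\Lambda \mydef \cup_{b \in X \setminus \{a\}} \Sigma_b$. The key observation I will exploit is that $\Lambda$ is a \emph{finite} set: indeed $X$ is finite, and for each fixed apex $b$, the set $\Sigma_b$ contains at most $2^n - 2$ half-spaces (one per non-empty proper subset $I$ of $[n]$ with $\CC \subset \HH(b,I)$). Finiteness of $\Lambda$ is crucial because it is precisely what allows the iterated use of the partial anti-exchange property in Corollary~\ref{Coro:partial-anti-exchange}. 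Now pick any $\HH \in \Gamma$ with $\apex(\HH) = a$ (which exists since $a \in X$), and set $\Gamma_1 \mydef \Gamma \setminus \{\HH\}$. Since the half-spaces of $\Gamma$ with apex $\neq a$ lie in $\Lambda$ and those with apex $a$ lie in $\Sigma_a \subset \tau(\Lambda)$, we get $\Gamma \subset \tau(\Lambda)$, so $\Sigma = \tau(\Gamma) \subset \tau(\Lambda) \subset \Sigma$, yielding $\tau(\Lambda) = \Sigma = \tau(\{\HH\} \cup \Gamma_1)$. Moreover every half-space in $\Lambda$ has apex in $X \setminus \{a\}$, hence distinct from $\apex(\HH) = a$. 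Corollary~\ref{Coro:partial-anti-exchange} then yields $\HH \in \tau(\Gamma_1) = \tau(\Gamma \setminus \{\HH\})$, contradicting the non-redundancy of $\Gamma$.

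The main obstacle I anticipate is precisely the verification of the hypotheses of Corollary~\ref{Coro:partial-anti-exchange}, and in particular keeping track of why the set $\Lambda$ one feeds into it is finite; once that point is isolated, the rest of the argument is bookkeeping. With minimality established, Corollary~\ref{Coro:Non-redundant-apices} forces $X = \mcA$, proving the theorem.
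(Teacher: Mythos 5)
Your proof is correct and follows essentially the same route as the paper: both arguments establish that $X \mydef \{\apex(\HH) \mid \HH \in \Gamma\}$ is a finite spanning set of $\mcB$ for $\tau'$, and both invoke Corollary~\ref{Coro:partial-anti-exchange} together with the uniqueness of the minimal finite spanning set from Corollary~\ref{Coro:Non-redundant-apices} to force $X = \mcA$. The only difference is cosmetic: you show directly that $X$ is a \emph{minimal} finite spanning set (applying Corollary~\ref{Coro:partial-anti-exchange} with $\Gamma_2 = \Lambda = \cup_{b \in X \setminus \{a\}} \Sigma_b$), whereas the paper proves the two inclusions $\mcA \subset X$ and $X \subset \mcA$ separately (with $\Gamma_2 = \cup_{a \in \mcA} \Sigma_a$ in the second step). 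Your care about the finiteness of $\Lambda$, needed for the iterated anti-exchange, is well placed; the paper relies on the analogous finiteness of $\cup_{a \in \mcA} \Sigma_a$.
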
 

\begin{proof}
Since $\cap_{\HH \in \Gamma} \HH = \CC$, 
we have $\tau'(\{ \apex(\HH) \mid \HH \in \Gamma\})=\mcB$. So, by Corollary~\ref{Coro:Non-redundant-apices},  
\[
\mcA \subset \{ \apex(\HH) \mid \HH \in \Gamma\} \;.
\]
Now suppose that for some $\HH' \in \Gamma$, $\apex(\HH') \not \in \mcA$.
Since  $\tau(\Gamma )=\Sigma= \tau(\cup_{a\in \mcA} \Sigma_a)$, by Corollary~\ref{Coro:partial-anti-exchange} it follows that $\HH' \in \tau(\Gamma \setminus \{ \HH'\})$. This contradicts the fact that $\Gamma$ is a 
non-redundant external representation of $\CC$.
\end{proof}

\subsection{Non-redundant half-spaces with the same apex}\label{subsec:non_redundant_with_same_apex}

We now study those half-spaces which have the same apex $a \in \mcB$ 
in non-redundant external representations of $\CC$. With this aim, 
assume $\CC$ is given by the intersection of 
half-spaces $\{\HH(a,I_l)\}_{l \in \oneto{q}}\subset \Sigma_a$ with apex $a$,
and a tropical cone 
\[
\DD \mydef \cap_{\HH' \in \Lambda } \HH' \; , 
\]
where $\Lambda \subset \Sigma \setminus \Sigma_a$ is a finite set of half-spaces 
whose apices are 
distinct from $a$.
We want to characterize the minimal subsets $L$ of $\oneto{q}$ satisfying:
\begin{equation}\label{Eq-Nonredundant-Local}
\tau (\Lambda \cup \{\HH(a,I_l)\}_{l \in L})=
\tau (\Lambda \cup \{\HH(a,I_l)\}_{l \in \oneto{q}})= \Sigma \; .
\end{equation}
Observe that $a$ is a non-redundant apex if, and only if, such minimal subsets are non-empty. In principle, these subsets depend on the 
half-spaces composing the set $\Lambda$. 
However, we next show that indeed this is not the case. 

\begin{proposition}\label{Prop-CanonicalMaximalSCC}
Let $\Lambda_1$ and $\Lambda_2$ be two finite sets of half-spaces in $\Sigma$, 
whose apices are 
distinct from $a$, such that 
\begin{equation}\label{Property0-Prop-CanonicalMaximalSCC}
\tau (\Lambda_1 \cup \{\HH(a,I_l)\}_{l\in \oneto{q}})
=\tau (\Lambda_2 \cup \{\HH(a,I_l)\}_{l\in \oneto{q}}) \;.
\end{equation} 
Then, $L$ is a minimal subset of $\oneto{q}$ satisfying 
\begin{equation}\label{Property1-Prop-CanonicalMaximalSCC}
\tau (\Lambda_1 \cup \{\HH(a,I_l)\}_{l\in L})
=\tau (\Lambda_1 \cup \{\HH(a,I_l)\}_{l\in \oneto{q}})
\end{equation}
if, and only if, $L$ is a minimal subset of $\oneto{p}$ satisfying 
\begin{equation}\label{Property2-Prop-CanonicalMaximalSCC}
\tau (\Lambda_2 \cup \{\HH(a,I_l)\}_{l\in L})=
\tau (\Lambda_2 \cup \{\HH(a,I_l)\}_{l\in \oneto{q}}) \; .
\end{equation}
\end{proposition}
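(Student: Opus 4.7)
By the symmetry of the hypothesis \eqref{Property0-Prop-CanonicalMaximalSCC}, it suffices to prove the forward implication: if $L$ is a minimal subset of $\oneto{q}$ satisfying \eqref{Property1-Prop-CanonicalMaximalSCC}, then $L$ is a minimal subset satisfying \eqref{Property2-Prop-CanonicalMaximalSCC}. My plan is to establish the stronger statement that, for \emph{every} $L\subset \oneto{q}$, \eqref{Property1-Prop-CanonicalMaximalSCC} holds if and only if \eqref{Property2-Prop-CanonicalMaximalSCC} holds. Once this equivalence is in hand the minimality part is automatic, since the subsets failing each condition coincide: any $L'\subsetneq L$ failing \eqref{Property1-Prop-CanonicalMaximalSCC} also fails \eqref{Property2-Prop-CanonicalMaximalSCC}, so $L$ remains minimal on the $\Lambda_2$-side.

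Assume \eqref{Property1-Prop-CanonicalMaximalSCC}. Chaining it with \eqref{Property0-Prop-CanonicalMaximalSCC} gives the key identity
\[
\tau(\Lambda_1 \cup \{\HH(a,I_l)\}_{l\in L})=\tau(\Lambda_2 \cup \{\HH(a,I_l)\}_{l\in \oneto{q}}).
\]
The nontrivial inclusion in \eqref{Property2-Prop-CanonicalMaximalSCC}, namely $\tau(\Lambda_2 \cup \{\HH(a,I_l)\}_{l\in \oneto{q}}) \subset \tau(\Lambda_2 \cup \{\HH(a,I_l)\}_{l\in L})$, therefore reduces, via the above identity, to showing that every $\HH' \in \Lambda_1$ lies in $\tau(\Lambda_2 \cup \{\HH(a,I_l)\}_{l\in L})$, since the apex-$a$ half-spaces indexed by $L$ are trivially present in both closures. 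I plan to obtain this membership by iterated applications of Corollary~\ref{Coro:partial-anti-exchange}. Since $\apex(\HH') \in \CC \setminus\{a\}$ we have $\apex(\HH') \ne a$, which provides exactly the apex-distinctness that the corollary exploits to peel off the half-spaces $\HH(a,I_l)$ for $l \in \oneto{q}\setminus L$ (of apex $a$) one at a time from the enlarged representation $\Lambda_2 \cup \{\HH(a,I_l)\}_{l \in \oneto{q}}$, while keeping $\HH'$ in the closure.

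The principal technical obstacle is that Corollary~\ref{Coro:partial-anti-exchange} demands an exact closure equality $\tau(\{\HH'\}\cup \Gamma_1)=\tau(\Gamma_2)$ at each step and, moreover, that \emph{every} apex appearing in $\Gamma_2$ differs from $\apex(\HH')$. Apices of $\Lambda_2$ may coincide with $\apex(\HH')$, ruling out a naive choice such as $\Gamma_2 = \Lambda_2 \cup \{\HH(a,I_l)\}_{l\in \oneto{q}}$. My plan is to group the half-spaces of $\Lambda_1 \cup \Lambda_2$ by common apex and to apply the underlying partial anti-exchange Theorem~\ref{prop:partial-anti-exchange} directly within each apex-group, so that only the truly apex-$a$ half-spaces need to be handled through Corollary~\ref{Coro:partial-anti-exchange}. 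The closure equalities required at each step should follow by combining the cone identity $\DD_1 \cap \EE_L = \CC = \DD_2 \cap \EE$ (obtained from \eqref{Property1-Prop-CanonicalMaximalSCC} and \eqref{Property0-Prop-CanonicalMaximalSCC}) with the particular half-spaces added or removed, but carrying out this bookkeeping rigorously, and in an order for which the apex-distinctness is preserved, is where I expect the main difficulty of the proof to lie.
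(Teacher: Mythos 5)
Your high-level plan --- prove the stronger claim that, for every $L\subset\oneto{q}$, \eqref{Property1-Prop-CanonicalMaximalSCC} holds if and only if \eqref{Property2-Prop-CanonicalMaximalSCC} holds, then let minimality follow --- is in fact exactly what the paper's own proof establishes, and your further reduction to showing $\Lambda_1\subset\tau(\Lambda_2\cup\{\HH(a,I_l)\}_{l\in L})$ is a valid equivalent reformulation. The problem is that the actual proof of this membership is left as a plan, and the plan as sketched does not go through. The obstacle you flag is real and, on closer inspection, fatal for your route: to apply Corollary~\ref{Coro:partial-anti-exchange} in the manner you describe, you need a set $\Gamma_2$ \emph{containing no apex-$a$ half-space} with $\tau(\Gamma_2)=\tau(\Lambda_2\cup\{\HH(a,I_l)\}_{l\in\oneto{q}})$. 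The only such equality the hypotheses supply is $\tau(\Lambda_1\cup\{\HH(a,I_l)\}_{l\in L})=\tau(\Lambda_2\cup\{\HH(a,I_l)\}_{l\in\oneto{q}})$, and the left-hand set \emph{does} contain apex-$a$ half-spaces as soon as $L\neq\emptyset$; moreover, for a general $\HH'\in\Lambda_1$ its apex may coincide with apices occurring in $\Lambda_2$, which poisons even the cases that look harmless. Your fallback of ``grouping by common apex and applying Theorem~\ref{prop:partial-anti-exchange} within each group'' is not a concrete argument: it neither identifies the closure equalities needed at each step nor an elimination order for which the apex-distinctness hypothesis holds, and that is precisely where the proof has to happen.

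The paper avoids this by a different assignment of roles. Assuming \eqref{Property1-Prop-CanonicalMaximalSCC} holds and \eqref{Property2-Prop-CanonicalMaximalSCC} fails for $L$, it chooses a \emph{minimal} $L'$ with $L\subsetneq L'\subset\oneto{q}$ for which \eqref{Property2-Prop-CanonicalMaximalSCC} holds, together with an $r\in L'\setminus L$ certifying minimality. Then, instead of trying to show that a non-apex-$a$ half-space survives the removal of apex-$a$ ones, it tracks the apex-$a$ half-space $\HH(a,I_r)$ and strips the half-spaces of $\Lambda_1$, one at a time, from $\Lambda_1\cup\Lambda_2\cup\{\HH(a,I_l)\}_{l\in L'\setminus\{r\}}$, invoking Theorem~\ref{prop:partial-anti-exchange} at each step. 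Because every half-space of $\Lambda_1$ has apex $\neq a$, the apex-distinctness hypothesis of the anti-exchange theorem is automatically satisfied at every step, with no coordination across apex groups required. The resulting membership $\HH(a,I_r)\in\tau(\Lambda_2\cup\{\HH(a,I_l)\}_{l\in L'\setminus\{r\}})$ contradicts the minimality of $L'$. In short: the gap in your proposal is not cosmetic bookkeeping; the choice of \emph{which} half-space to track and \emph{which} to remove is the crux, and the paper's contradiction with a carefully chosen $L'$ is what makes the anti-exchange argument work.
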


\begin{proof}
Observe that to prove the proposition, it is enough to show that a subset
$L$ of $\oneto{q}$ satisfies~\eqref{Property1-Prop-CanonicalMaximalSCC} 
only if it satisfies~\eqref{Property2-Prop-CanonicalMaximalSCC}. 

By the contrary, suppose that~\eqref{Property1-Prop-CanonicalMaximalSCC} 
is satisfied by some $L\subset \oneto{q}$ 
but~\eqref{Property2-Prop-CanonicalMaximalSCC} is not. 
In that case, we can always define a subset $L'$ of $\oneto{q}$ 
such that $L\subsetneq L'$, \eqref{Property2-Prop-CanonicalMaximalSCC} 
is satisfied with $L'$ instead of $L$, but 
\[
\tau (\Lambda_2 \cup \{\HH(a,I_l)\}_{l\in L'\setminus\{r\}})\subsetneq
\tau (\Lambda_2 \cup \{\HH(a,I_l)\}_{l\in \oneto{q}})
\] 
for some $r \in L'\setminus L$. 
Then, by~\eqref{Property1-Prop-CanonicalMaximalSCC} and the fact that 
$L\subset L'\setminus\{r\}$, we obtain: 
\[ 
\HH(a,I_{r})\in 
\tau (\Lambda_1 \cup \Lambda_2 \cup \{\HH(a,I_l)\}_{l\in L'\setminus\{r\}} ) 
\; .  
\] 
Moreover, given $\HH'\in \Lambda_1$, we have:
\[
\HH' \in \tau ((\Lambda_1\setminus \{\HH'\}) \cup \Lambda_2 \cup 
\{\HH(a,I_l)\}_{l\in L'}) 
\] 
by using~\eqref{Property0-Prop-CanonicalMaximalSCC} and the fact that~\eqref{Property2-Prop-CanonicalMaximalSCC} 
is satisfied with $L'$ instead of $L$.
As $\HH(a,I_r)$ and $\HH'$ have distinct apices, by Theorem~\ref{prop:partial-anti-exchange}, it follows that 
\[ 
\HH(a,I_{r})\in 
\tau ((\Lambda_1\setminus \{\HH'\}) \cup \Lambda_2 \cup 
\{\HH(a,I_l)\}_{l\in L'\setminus\{r\}} ) \; . 
\] 
Repeating this argument, we conclude that $\HH(a,I_{r})\in 
\tau (\Lambda_2 \cup \{\HH(a,I_l)\}_{l\in L'\setminus\{r\}})$. 
However, this is a contradiction, because it would imply  
\[
\tau (\Lambda_2 \cup \{\HH(a,I_l)\}_{l\in L'\setminus\{r\}} ) =
\tau (\Lambda_2 \cup \{\HH(a,I_l)\}_{l\in L'} )    
\]
and so~\eqref{Property2-Prop-CanonicalMaximalSCC} 
would be satisfied with $L'\setminus\{r\}$ instead of $L$.      
\end{proof}

We now introduce a directed graph $\reachgraph(\DD,a)$ defined as follows:
\begin{enumerate}[label=(\roman*)]
\item its nodes are the elements of the set 
\[
E_a \; \mydef \; 
\{[n] \} \, \cup \, \{ I \subset \oneto{n} \mid \HH(a,I) \in \Sigma_a \} 
\; = \; \{[n] \} \, \cup \, \{ I \subset \oneto{n} \mid \CC \subset \HH(a,I) \}  \; ,
\]
\item there is an arc from $I$ to $J$ if, and only if, $J$ is reachable from $I$ in the tangent directed hypergraph $\GG(\Lambda,a)$ at $a$ induced by $\Lambda$.
\end{enumerate}
Note that, by Proposition~\ref{prop:tangent_hypergraph}, the graph $\reachgraph(\DD,a)$ does not depend on the choice of the set $\Lambda$ of half-spaces representing the cone $\DD$. 

The following proposition shows that the redundancy of half-spaces with apex $a$ can be characterized using $\reachgraph(\DD,a)$. 

\begin{proposition}\label{Prop-Charac-SCC-Redundant}
Let $\{\HH(a,I_l)\}_{l \in L}$ be a non-empty subset of $\Sigma_a$, and $\HH(a,J) \in \Sigma_a$. Then, 
\begin{equation}\label{EqProp-Charac-SCC-Redundant}
\HH(a,J) \in \tau(\Lambda \cup \{\HH(a,I_l)\}_{l \in L})
\end{equation}
if, and only if, for some $r \in L$, $I_r$ is reachable from $J$ in the directed graph $\reachgraph(\DD,a)$.
\end{proposition}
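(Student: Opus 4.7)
The plan is to reduce the statement to a reachability question in the tangent hypergraph at $a$, via Proposition~\ref{prop:local_redundancy_hypergraph_characterization}. Since the apex of $\HH(a,J)$ is $a$, and every half-space in $\Lambda \cup \{\HH(a,I_l)\}_{l \in L}$ contains $\CC$ and hence $a$, that proposition applies: \eqref{EqProp-Charac-SCC-Redundant} holds if, and only if, $[n]$ is reachable from $J$ in $\GG(\Lambda \cup \{\HH(a,I_l)\}_{l \in L},\, a)$.

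First I would describe this hypergraph explicitly. Applying Definition~\ref{def:tangent_hypergraph}, each $\HH(a,I_l)$ is active at $a$ (both sides of its defining inequality evaluate to $\mpone$ there), and the corresponding argmaxes are precisely $I_l$ and $\compl{I_l}$. Hence
\[
\GG(\Lambda \cup \{\HH(a,I_l)\}_{l \in L},\, a) \;=\; \GG(\Lambda, a) \,\cup\, \bigl\{(I_l,\, \compl{I_l})\bigr\}_{l \in L}.
\]
Next I would note that, by definition of $\reachgraph(\DD,a)$ and the transitivity of hypergraph reachability, reachability of $I_r$ from $J$ in $\reachgraph(\DD,a)$ between elements of $E_a$ coincides with reachability of $I_r$ from $J$ in the hypergraph $\GG(\Lambda,a)$: one direction is a single arc in $\reachgraph(\DD,a)$, the other is the concatenation of the hyperpaths witnessing successive arcs.

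To prove the equivalence I would split into two directions. For the ``if'' direction, suppose $I_r$ is reachable from $J$ in $\GG(\Lambda,a)$ for some $r \in L$; prepending that hyperpath to the hyperarc $(I_r,\compl{I_r})$ shows that $\compl{I_r}$, and therefore $I_r \cup \compl{I_r} = [n]$, is reachable from $J$ in the combined hypergraph. For the ``only if'' direction, suppose $[n]$ is reachable from $J$ in the combined hypergraph and fix a witnessing hyperpath $(T_1, H_1), \dots, (T_q, H_q)$. If none of the hyperarcs used belongs to $\{(I_l,\compl{I_l})\}_{l \in L}$, then $[n]$ is already reachable from $J$ in $\GG(\Lambda,a)$ alone, and since $L \neq \emptyset$ we may pick any $r \in L$; then $I_r \subset [n]$ is reachable from $J$ in $\GG(\Lambda,a)$. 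Otherwise, let $k$ be the smallest index with $(T_k, H_k) = (I_l, \compl{I_l})$ for some $l \in L$. The prefix $(T_1, H_1), \dots, (T_{k-1}, H_{k-1})$ consists only of hyperarcs of $\GG(\Lambda,a)$, and the condition $T_k \subset J \cup H_1 \cup \dots \cup H_{k-1}$ says exactly that $I_l = T_k$ is reachable from $J$ in $\GG(\Lambda,a)$, giving the desired $r \mydef l$.

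Since the argument is a direct combination of Proposition~\ref{prop:local_redundancy_hypergraph_characterization} with the explicit structure of the tangent hypergraph at $a$ and a routine case split on the first ``local'' hyperarc in the hyperpath, I do not anticipate any substantial obstacle; the only care needed is in the bookkeeping of the indices when extracting the prefix of the hyperpath.
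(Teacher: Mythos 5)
Your proposal is correct and follows essentially the same strategy as the paper: reduce via Proposition~\ref{prop:local_redundancy_hypergraph_characterization} to reachability in the tangent hypergraph, observe that the added hyperarcs are $(I_l,\compl{I_l})$, and for the ``only if'' direction argue by taking the first occurrence of such a hyperarc in the witnessing hyperpath (with the degenerate case where none occurs handled by noting every subset is then reachable in $\GG(\Lambda,a)$). Your explicit preliminary remark identifying reachability between elements of $E_a$ in $\reachgraph(\DD,a)$ with reachability in $\GG(\Lambda,a)$ is a useful bookkeeping step that the paper leaves implicit.
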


\begin{proof}
In the first place, observe that the tangent directed hypergraph 
\[
\GG_L \mydef \GG(\Lambda \cup \{\HH(a,I_l)\}_{l \in L}, a)
\] 
is obtained by adding hyperarcs connecting $I_l$ with $\compl{I_l}$, for $l\in L$, to the tangent directed hypergraph $\GG(\Lambda ,a)$.

Assume $I_r$ is reachable from $J$ in $\reachgraph(\DD,a)$ for some $r \in L$. Then, $I_r$ is also reachable from $J$ in the hypergraph $\GG_L$. Since $\GG_L$ contains a hyperarc connecting $I_r$ with $\compl{I_r}$, we conclude that $[n]$ is reachable from $J$ in $\GG_L$. Therefore, by Proposition~\ref{prop:local_redundancy_hypergraph_characterization}, it follows that~\eqref{EqProp-Charac-SCC-Redundant} holds. 

Assume now that~\eqref{EqProp-Charac-SCC-Redundant} is satisfied. Then, by Proposition~\ref{prop:local_redundancy_hypergraph_characterization} 
we know that $[n]$ is reachable from $J$ in $\GG_L$. Consider a hyperpath connecting $J$ with $[n]$ in $\GG_L$. It is convenient to split the rest of the proof into two cases. 

If one of the hyperarcs in the hyperpath connecting $J$ with $[n]$ is associated with a half-space in $\{\HH(a,I_l)\}_{l\in L }$, let $\HH(a,I_r)$ be the half-space corresponding to the first occurrence of such an hyperarc in the hyperpath. Then, $I_r$ is reachable from $J$ in $\GG(\Lambda ,a)$, and consequently in $\reachgraph(\DD,a)$.

If no hyperarc in the considered hyperpath 
 is associated with a half-space in $\{\HH(a,I_l)\}_{l\in L }$, 
we conclude that $[n]$ is reachable from $J$ in 
$\GG(\Lambda ,a)$. Therefore, any subset of $\oneto{n}$ is reachable from $J$ in $\GG(\Lambda ,a)$, and so any node of $\reachgraph(\DD,a)$ is reachable from 
$J$ in $\reachgraph(\DD,a)$. This completes the proof of the proposition.  
\end{proof}

\begin{definition}\label{Def-Mutually-Redundant}
Two half-spaces $\HH, \HH' \in \Sigma$ are said to be \myemph{mutually redundant} with respect to $\Gamma \subset \Sigma$ if $\HH \in \tau (\Gamma \cup \{\HH' \})$ and $\HH' \in \tau (\Gamma \cup \{\HH \})$. 
\end{definition}
 
As an immediate corollary of Proposition~\ref{Prop-Charac-SCC-Redundant}, 
we obtain:  

\begin{corollary}\label{Coro-SameSCC}
The half-spaces $\HH(a,I)$ and $\HH(a,J)$ are mutually redundant with respect to $\Lambda $ if, and only if, $I$ and $J$ belong to the same strongly connected component of $\reachgraph(\DD,a)$.  
\end{corollary}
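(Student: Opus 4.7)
The plan is to derive this directly from Proposition~\ref{Prop-Charac-SCC-Redundant} by applying it twice with singleton subsets of $\Sigma_a$. The two conditions defining mutual redundancy (Definition~\ref{Def-Mutually-Redundant}) each reduce to a one-sided reachability condition in $\reachgraph(\DD,a)$, and together they are precisely the definition of lying in the same strongly connected component.

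More concretely, I would first apply Proposition~\ref{Prop-Charac-SCC-Redundant} with $L=\{1\}$ and $I_1=I$ to obtain that $\HH(a,J)\in \tau(\Lambda \cup \{\HH(a,I)\})$ holds if, and only if, $I$ is reachable from $J$ in $\reachgraph(\DD,a)$. Symmetrically, swapping the roles of $I$ and $J$ gives that $\HH(a,I)\in \tau(\Lambda \cup \{\HH(a,J)\})$ holds if, and only if, $J$ is reachable from $I$ in $\reachgraph(\DD,a)$.

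Combining these two equivalences, $\HH(a,I)$ and $\HH(a,J)$ are mutually redundant with respect to $\Lambda$ exactly when $I$ is reachable from $J$ and $J$ is reachable from $I$ in $\reachgraph(\DD,a)$, which is the definition of $I$ and $J$ belonging to the same strongly connected component of this directed graph. Since both $I$ and $J$ are assumed to be sectors of half-spaces in $\Sigma_a$, they are indeed nodes of $\reachgraph(\DD,a)$, so the statement is meaningful and the argument is complete. There is no real obstacle here — the only thing to check is that Proposition~\ref{Prop-Charac-SCC-Redundant} applies, which requires $\{\HH(a,I)\}$ (resp. $\{\HH(a,J)\}$) to be a non-empty subset of $\Sigma_a$ and the other half-space to lie in $\Sigma_a$, both of which are immediate from the hypothesis that $\HH(a,I),\HH(a,J)\in \Sigma_a$.
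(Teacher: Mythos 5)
Your proof is correct and matches the paper's approach: the paper states this result as "an immediate corollary of Proposition~\ref{Prop-Charac-SCC-Redundant}," and you have spelled out exactly the two singleton applications and the final observation that mutual reachability is the definition of lying in the same strongly connected component.
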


The reachability relation associated with the directed graph $\reachgraph(\DD,a)$ naturally induces a pre-order~$\reachleq$ on the elements of $E_a$, \ie{}\ $I \reachleq J$ if, and only if, $J$ is reachable from $I$. Considering the equivalence relation $I \equiv J$ defined by $I \reachleq J \reachleq I$, the pre-order can be turned into a partial order (still denoted $\reachleq$ by abuse of notation) over the quotient set $E_a / \mathord{\equiv}$ formed by the strongly connected components of $\reachgraph(\DD,a)$.

The abstract structure of half-spaces with the same apex $a$ is thus in relation to a poset convex geometry. A \myemph{poset convex geometry} is a pair $(G, \sigma)$, where $G$ is a ground set, and $\sigma : 2^G \rightarrow 2^G$ is the closure operator defined as 
\[
\sigma(X) \mydef \{ y \in G \mid y \reachleq x \makebox{ for some } x\in X\}
\]
for all $X \in 2^G$. Poset convex geometries arise from poset antimatroids~\cite{korte}, in the sense that the closed elements of a poset convex geometry are precisely the complements of the feasible elements of a poset antimatroid.

In our case, the poset convex geometry is associated with the partially ordered set formed by the strongly connected components of $\reachgraph(\DD,a)$. We can then verify that the quotient set $E_a / \mathord{\equiv}$ has a unique minimal spanning set, consisting of the strongly connected components which are maximal for the order~$\reachleq$. This leads to the following characterization:

\begin{theorem}\label{Th-MaximalSCC}
The following two properties hold:
\begin{enumerate}[label=(\roman*)]
\item\label{item:Th-MaximalSCC1} 
The apex $a$ is non-redundant if, and only if, the directed graph $\reachgraph(\DD,a)$ is not strongly connected.
\item\label{item:Th-MaximalSCC2} When $a$ is a non-redundant apex, $L$ is a minimal subset of $\oneto{q}$ satisfying~\eqref{Eq-Nonredundant-Local} if, and only if, $\{I_l \}_{l\in L}$ is composed of precisely one element of each maximal strongly connected component of~$\reachgraph(\DD,a)$.
\end{enumerate}
\end{theorem}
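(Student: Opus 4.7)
The plan rests on two structural observations about $\reachgraph(\DD,a)$ that follow directly from its definition. First, the node $[n]$ has an arc to every node of $E_a$, since any subset of $[n]$ is trivially reachable from $[n]$ in the tangent hypergraph via the empty hyperpath. Second, concatenation of hyperpaths is again a hyperpath, so reachability in $\reachgraph(\DD,a)$ coincides with the existence of a direct arc, and the graph in fact encodes a preorder on $E_a$.

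For part~\ref{item:Th-MaximalSCC1}, I would translate ``$a$ non-redundant'' into the language of $\reachgraph(\DD,a)$. Taking $L=\emptyset$ in~\eqref{Eq-Nonredundant-Local} and invoking Proposition~\ref{prop:local_redundancy_hypergraph_characterization}, $a$ is non-redundant if and only if some $I_l$ fails to reach $[n]$ in $\reachgraph(\DD,a)$, which combined with the first observation shows that the graph is not strongly connected. For the converse, assuming some $I\in E_a$ does not reach $[n]$, Proposition~\ref{Prop-Charac-SCC-Redundant} applied with $L=[q]$ forces some $I_r$ to be reachable from $I$; if $I_r$ reached $[n]$ then so would $I$ by transitivity, so $I_r$ itself does not reach $[n]$.

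For part~\ref{item:Th-MaximalSCC2}, the central lemma to isolate is the following: every maximal strongly connected component $C$ of $\reachgraph(\DD,a)$ contains some $I_r$ with $r\in[q]$. Its proof reuses the previous idea: pick any $I\in C$; Proposition~\ref{Prop-Charac-SCC-Redundant} with $L=[q]$ yields some $I_r$ reachable from $I$, and since $C$ is a sink in the SCC DAG (noting that the SCC of $[n]$ is never maximal because $[n]$ has outgoing arcs to every node), $I_r$ must lie in $C$.

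Granting this lemma, both directions of the characterization are straightforward. For the ``if'' direction, when $\{I_l\}_{l\in L}$ picks exactly one element per maximal SCC, each $I_{l'}$ reaches a maximal SCC by walking down the SCC DAG to a sink, hence reaches the chosen $I_l$ sitting there; removing any $l_0\in L$ strands $I_{l_0}$ inside its maximal SCC, which then contains no element of $L\setminus\{l_0\}$, breaking~\eqref{Eq-Nonredundant-Local}. For the ``only if'' direction, minimality of $L$ forces three properties in turn: at least one element per maximal SCC (sinks trap reachability, so the $I_{l'}$ provided by the lemma must reach an $I_l\in L$ located inside the same $C$), at most one (two elements of the same SCC are mutually redundant, so one can be deleted), and no element in a non-maximal SCC (such an element could be replaced by a representative found downstream in a maximal SCC). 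The delicate point throughout is keeping track of the fact that the $I_l$'s need not exhaust $E_a$, which is precisely what makes the lemma above indispensable; once it is in hand, the rest of the argument reduces to standard reasoning on the SCC DAG of a preorder.
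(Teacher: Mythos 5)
Your proof is correct and rests on the same pillars as the paper's: Proposition~\ref{prop:local_redundancy_hypergraph_characterization}, Proposition~\ref{Prop-Charac-SCC-Redundant}, Corollary~\ref{Coro-SameSCC}, the observation that $\oneto{n}$ reaches every node of $E_a$, and the transitivity of the arc relation in $\reachgraph(\DD,a)$. The one genuine structural difference is the lemma you single out, that every maximal SCC of $\reachgraph(\DD,a)$ contains some $I_r$ with $r\in\oneto{q}$, which you call indispensable. In fact it is not: in the ``only if'' direction, the paper applies Proposition~\ref{Prop-Charac-SCC-Redundant} directly to the half-space $\HH(a,J)$ for an \emph{arbitrary} node $J$ of the maximal SCC $C$ (any such $J$ gives $\HH(a,J)\in\Sigma_a$ because $C$ does not contain $\oneto{n}$), and this single application already produces an $I_l$ with $l\in L$ inside $C$ — a stronger conclusion than your lemma. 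Your version first extracts an $I_{l'}$ with $l'\in\oneto{q}$ and then re-invokes the proposition for $L$, so you pay one extra application for no gain; the ``$I_l$'s need not exhaust $E_a$'' worry is handled automatically once one remembers that $\Sigma_a$ is larger than $\{\HH(a,I_l)\}_l$. A small caveat worth flagging in the ``if'' direction: when $L$ is a singleton, your step ``remove $l_0$ and use Proposition~\ref{Prop-Charac-SCC-Redundant}'' would invoke the proposition with an empty family, which its hypotheses do not permit; as the paper does, one should instead appeal directly to the non-redundancy of $a$ (\ie\ $\tau(\Lambda)\neq\Sigma$) to conclude minimality in that case.
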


\begin{proof}
\begin{enumerate}[label=(\roman*),wide]
\item Assume the directed graph $\reachgraph(\DD,a)$ is strongly connected. Then, for each $I \in E_a$, node $\oneto{n}$ is reachable from $I$ in $\reachgraph(\DD,a)$, and consequently in the directed hypergraph $\GG(\Lambda,a)$. By Proposition~\ref{prop:local_redundancy_hypergraph_characterization}, we deduce that $\HH(a,I_l)$ is redundant with respect to $\Lambda$ for each $l \in \oneto{q}$. We conclude that $\Lambda$ is an external representation of the cone $\CC$, and since no half-space in $\Lambda$ has apex $a$, $a \not \in \mcA$ by Theorem~\ref{th:non_redundant_apices}.  

Suppose now that $a \not \in \mcA$. Then, $\HH(a,I) \in \tau(\Lambda)$ for all $I \in E_a\setminus \{\oneto{n} \}$. By Proposition~\ref{prop:local_redundancy_hypergraph_characterization}, node $[n]$ is reachable in $\GG(\Lambda,a)$ from any node $I \in E_a\setminus \{\oneto{n} \}$, hence in $\reachgraph(\DD,a)$. Since any node $I\in E_a$ is obviously reachable from node $\oneto{n}$ in $\reachgraph(\DD,a)$, we conclude that the directed graph $\reachgraph(\DD,a)$ is strongly connected.
\item First observe that by~\ref{item:Th-MaximalSCC1}, the nodes of the maximal strongly connected components of~$\reachgraph(\DD,a)$ are all distinct from the node $[n]$.

To prove the ``only if'' part, let $L$ be a minimal subset of $\oneto{q}$ satisfying~\eqref{Eq-Nonredundant-Local}. As $a$ is a non-redundant apex, we know that $L \neq \emptyset$. In the first place, consider any maximal strongly connected component of $\reachgraph(\DD,a)$, and let $J$ be any node of that component. Since 
\[
\HH(a,J) \in \Sigma = \tau (\Lambda \cup \{\HH(a,I_l)\}_{l \in L}) \;,
\]
then by Proposition~\ref{Prop-Charac-SCC-Redundant}, we know that for some $r\in L$, $ I_r$ is reachable from $J$ in $\reachgraph(\DD,a)$. As $J$ belongs to a maximal strongly connected component of $\reachgraph(\DD,a)$, $I_r$ must belong to the same component. As a consequence, $\{I_l\}_{l\in L}$ contains at least one element of 
each maximal strongly connected component of $\reachgraph(\DD,a)$. In the second place,  
by Corollary~\ref{Coro-SameSCC} the minimality of $L$
implies $\{I_l\}_{l\in L}$ is composed of precisely one node of each maximal strongly connected component of the digraph $\reachgraph(\DD,a)$.

Conversely, consider a (non-empty) subset $L \subset [q]$ such that $\{I_l\}_{l \in L}$ is composed of precisely one element of each maximal strongly connected component of $\reachgraph(\DD,a)$. 
Then, \eqref{Eq-Nonredundant-Local} is satisfied because 
\[
\HH(a,I_r) \in \tau (\Lambda \cup \{\HH(a,I_l)\}_{l\in L})
\] for all $r \in \oneto{q}$ by Proposition~\ref{Prop-Charac-SCC-Redundant}. We claim that $L$ is a minimal subset of $[q]$ satisfying~\eqref{Eq-Nonredundant-Local}. Indeed, if $L$ is a singleton, it is obviously minimal since $a$ is a non-redundant apex. Similarly, if $L$ has more than one element, by Proposition~\ref{Prop-Charac-SCC-Redundant} we have 
\[
\HH(a,I_r) \not \in \tau(\Lambda \cup \{\HH(a,I_l)\}_{l \in L \setminus \{r\}}) \quad \text{for all} \ r \in L \;.
\]
This shows the ``if'' part of the statement.\qedhere 
\end{enumerate}
\end{proof}

In particular, the following corollary follows from 
Theorem~\ref{Th-MaximalSCC} and 
Proposition~\ref{Prop-CanonicalMaximalSCC}.  

\begin{corollary}\label{Coro-Same-MaximalSCC}
Let $\Lambda_1$ and $\Lambda_2$ be two finite sets of half-spaces in $\Sigma$, 
whose apices are 
distinct from $a$, such that:
\[
\tau (\Lambda_1 \cup \{\HH(a,I_l)\}_{l\in \oneto{q}} )
=\tau (\Lambda_2 \cup \{\HH(a,I_l)\}_{l\in \oneto{q}} )=\Sigma \;.
\] 
If we define the tropical cones $\DD_1 \mydef \cap_{\HH \in \Lambda_1} \HH$ and 
$\DD_2 \mydef \cap_{\HH \in \Lambda_2} \HH$, then the maximal 
strongly connected components of the directed graphs $\reachgraph(\DD_1,a)$ and 
$\reachgraph(\DD_2,a)$ coincide. 
\end{corollary}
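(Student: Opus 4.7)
The plan is to combine Proposition~\ref{Prop-CanonicalMaximalSCC} with Theorem~\ref{Th-MaximalSCC}. As a preliminary move, I would enlarge the family $\{\HH(a,I_l)\}_{l \in [q]}$ so that it coincides with $\Sigma_a$; this is possible because $E_a$, and hence $\Sigma_a$, is finite, and the hypothesis $\tau(\Lambda_i \cup \{\HH(a,I_l)\}_{l \in [q]}) = \Sigma$ is trivially preserved when extra half-spaces of $\Sigma_a$ are thrown in, the closure being already equal to $\Sigma$. From now on I may therefore assume $\{I_l\}_{l \in [q]} = E_a \setminus \{[n]\}$.

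Next, I would dispose of the trivial case in which $a$ is a redundant apex (\ie{} $a \notin \mcA$). By Theorem~\ref{Th-MaximalSCC}(i) applied in turn to $\DD_1$ and $\DD_2$, this is equivalent to $\reachgraph(\DD_i,a)$ being strongly connected. Since the property $a \in \mcA$ depends only on $\CC$, it holds simultaneously for $i=1,2$; in the redundant case the unique maximal strongly connected component of each digraph is the whole node set $E_a$, and the two collections of maximal SCCs coincide trivially.

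Suppose now that $a$ is non-redundant. Proposition~\ref{Prop-CanonicalMaximalSCC}, applied with the enlarged indexing, says that the families of minimal subsets $L \subset [q]$ satisfying~\eqref{Eq-Nonredundant-Local} are the same for $\Lambda_1$ and $\Lambda_2$. Theorem~\ref{Th-MaximalSCC}(ii) then identifies each such minimal $L$ with a transversal, \ie{} a choice of exactly one $I_l$ in each maximal strongly connected component of $\reachgraph(\DD_i,a)$. Hence the two families of transversals of the maximal SCCs of $\reachgraph(\DD_1,a)$ and $\reachgraph(\DD_2,a)$ coincide.

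The final step is the elementary observation that a partition of a finite set is completely determined by its family of transversals. Concretely, an index $l \in [q]$ appears in some minimal $L$ if and only if $I_l$ lies in a maximal SCC, and two such indices $l, l'$ correspond to elements $I_l, I_{l'}$ lying in the same maximal SCC if and only if no minimal $L$ contains both of them. Since both criteria are read off from the common family of minimal $L$'s, the partitions of $\{I_l\}_{l \in [q]} = E_a \setminus \{[n]\}$ into maximal SCCs must agree, and hence the maximal SCCs themselves coincide as subsets of $E_a$. I do not foresee a serious obstacle; the one subtlety is the preliminary enlargement of $[q]$, which is what allows Theorem~\ref{Th-MaximalSCC}(ii) to pin down every element of each maximal SCC rather than only its intersection with the original $\{I_l\}$.
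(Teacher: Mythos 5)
Your proof is correct, and it does the work the paper leaves implicit: the paper merely asserts that the corollary ``follows from Theorem~\ref{Th-MaximalSCC} and Proposition~\ref{Prop-CanonicalMaximalSCC}'' without giving a derivation. You have correctly identified the two necessary steps beyond a bare citation: (1) the preliminary enlargement of $\{I_l\}_{l\in[q]}$ to all of $E_a \setminus \{[n]\}$, without which Theorem~\ref{Th-MaximalSCC}(ii) only recovers the intersections of the maximal strongly connected components with the original index set rather than the components themselves; and (2) the inversion from transversals back to the sets being traversed. The case split on $a \in \mcA$ vs.\ $a \not\in\mcA$ via Theorem~\ref{Th-MaximalSCC}(i) is also needed, since Proposition~\ref{Prop-CanonicalMaximalSCC} and Theorem~\ref{Th-MaximalSCC}(ii) only give information through nonempty minimal $L$'s, which exist only in the non-redundant case. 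One small imprecision in phrasing: the maximal strongly connected components of $\reachgraph(\DD_i,a)$ do not form a partition of $E_a \setminus \{[n]\}$ in general (there may be nodes belonging only to non-maximal components), so the ``elementary observation'' about partitions should be stated for the \emph{union} of the maximal components rather than for $E_a \setminus \{[n]\}$; your subsequent two criteria do make this precise, so the argument goes through as written.
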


Theorem~\ref{Th-MaximalSCC} shows that, when $a$ is a non-redundant apex, a half-space $\HH(a,I)$ occurring in a non-redundant external representation of $\CC$ can be exchanged with another half-space $\HH(a,J)$ if, and only if, $I$ and $J$ belong to the same (maximal) strongly connected components of $\reachgraph(\DD,a)$. By Propositions~\ref{prop:tangent_cone_exactness} and~\ref{prop:tangent_hypergraph}, this implies that the equality constraint $\vectinverse{a_I} x = \vectinverse{a_J} x$ is satisfied for any $x \in \CC$ located in a certain neighborhood of $a$. This can be seen as analogous to the situation of a non-fully dimensional ordinary polytope (\ie{}\ whose affine hull is a proper subspace of $\mathbb{R}^n$). However, the difference here is that the exchange is due to the local shape of the polytope around $a$, and not to its global shape. Moreover, the vectors $x$ satisfying 
\[
\vectinverse{a_I} x = \vectinverse{a_J} x \geq \vectinverse{a_{\compl{(I\cup J)}}} x
\]
are included in a tropical hyperplane if, and only if, $I \cap J = \emptyset$. In contrast, in the case where $I$ is included in $J$, this constraint is equivalent to the inequality $\vectinverse{a_I} x \geq \vectinverse{a_{\compl{I}}} x$.

We finally study the structure of maximal strongly connected components of $\reachgraph(\DD,a)$. With this aim, recall that a \myemph{principal ideal} of $E_a$ is a subset of $E_a$ of the form $\{ I \in E_a \mid I \subset J\}$, for certain $J \in E_a$, which is called the \myemph{principal element} of this ideal. 

\begin{proposition}
Every maximal strongly connected component $C$ of $\reachgraph(\DD,a)$ is a principal ideal of $E_a$. Moreover, the principal element of $C$ is given by the biggest subset $R$ of $\oneto{n}$ reachable from $I$ in $\GG(\Lambda,a)$, for any $I\in C$. 
\end{proposition}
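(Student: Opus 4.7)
The plan is to work with the function that sends a node $I \in E_a$ to the biggest subset $R(I) \subset [n]$ reachable from $I$ in $\GG(\Lambda,a)$, whose existence is guaranteed by Remark~\ref{remark:reachability_hypergraph}. The first observation to record is that the arcs of $\reachgraph(\DD,a)$ admit a very clean description in these terms: by the definition of hypergraph reachability, $J$ is reachable from $I$ in $\GG(\Lambda,a)$ if and only if $J \subset R(I)$, hence there is an arc $I \to J$ in $\reachgraph(\DD,a)$ iff $J \subset R(I)$. A second, immediate monotonicity property is that $I \subset R(K)$ forces $R(I) \subset R(K)$, since hyperpaths compose.

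Fix $I_0 \in C$ and set $R \mydef R(I_0)$. The first substantive step is to show that $R$ itself lies in $E_a$. If $R = [n]$ this is by definition, and otherwise $I_0 \subset R$ implies $\HH(a, R) \supset \HH(a, I_0) \supset \CC$ (enlarging the sector index set only enlarges the half-space), so $\CC \subset \HH(a, R)$ and thus $R \in E_a$. The second step is to place $R$ inside $C$: the arc $I_0 \to R$ in $\reachgraph(\DD,a)$ holds by the very definition of $R$, and the arc $R \to I_0$ holds because $I_0 \subset R$ yields $I_0 \subset R(R)$ trivially. Hence $I_0$ and $R$ lie in the same strongly connected component, namely $C$.

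The third step shows $R(J) = R$ for every $J \in C$. Strong connectivity gives both $J \subset R(I_0) = R$ and $I_0 \subset R(J)$; the monotonicity observation above then yields $R(J) \subset R(I_0)$ and $R(I_0) \subset R(J)$, so $R(J) = R$. In particular every $J \in C$ satisfies $J \subset R(J) = R$, proving the inclusion $C \subset \{J \in E_a \mid J \subset R\}$.

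The last step is the reverse inclusion: if $J \in E_a$ satisfies $J \subset R$, then $J$ is trivially reachable from $R$ in $\GG(\Lambda,a)$, so there is an arc $R \to J$ in $\reachgraph(\DD,a)$. Since $R \in C$ and $C$ is a maximal strongly connected component (equivalently, a sink in the quotient poset $E_a / \mathord{\equiv}$), every node reachable from a node of $C$ must belong to $C$, whence $J \in C$. Combining the two inclusions gives $C = \{J \in E_a \mid J \subset R\}$, which is a principal ideal of $E_a$ with principal element $R = R(I)$ for any $I \in C$. The proof is largely bookkeeping once the dictionary ``arc in $\reachgraph$ $=$ subset of $R(\cdot)$'' is in hand; the only mild subtlety to watch out for is ensuring that the candidate principal element $R$ is itself a node of the graph $\reachgraph(\DD,a)$, which is precisely what Step~1 handles.
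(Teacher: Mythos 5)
Your proof is correct and follows essentially the same approach as the paper: identify the reachable set $R$ from a representative of $C$, note $R\in E_a$ since $I\subset R$, and show $C=\{J\in E_a\mid J\subset R\}$ via reachability in one direction and maximality (sink property) of $C$ in the other. The only cosmetic difference is that you explicitly record that $R$ itself belongs to $C$ and route the second inclusion through the arc $R\to J$, whereas the paper uses the arc $I\to J$ directly; both come to the same thing.
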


\begin{proof}
Consider any $I \in C$, and (recalling Remark~\ref{remark:reachability_hypergraph}) let $R$ be the biggest subset of $\oneto{n}$ reachable from $I$ in $\GG(\Lambda,a)$. Since $I \subset R$, we know that $R \in E_a$. We claim that for all $J \in E_a$, $J$ belongs to $C$ if, and only if, $J \subset R$. 

If $J$ belongs to $C$, then $J$ is reachable from $I$ in the graph $\reachgraph(\DD,a)$, and consequently in the hypergraph $\GG(\Lambda,a)$. By definition of $R$, it follows that $J \subset R$.

Conversely, if $J \subset R$, then $J$ is reachable from $I$ in $\GG(\Lambda,a)$, thus also in $\reachgraph(\DD,a)$ because we assume $J \in E_a$. As $C$ is a maximal strongly connected component of $\reachgraph(\DD,a)$ and $I\in C$, we conclude that $J \in C$.
\end{proof}

As a consequence, any maximal strongly connected component $C$ of $\reachgraph(\DD,a)$ is completely determined by its principal element. Besides, when the apex $a$ is non-redundant, the minimal elements (for inclusion) of $C$ correspond to minimal half-spaces containing $\CC$. To see this, assume that $I$ is a minimal element of $C$, and let $\HH$ be a minimal half-space containing $\CC$ such that $\CC \subset \HH \subset \HH(a,I)$. Let $\{I_l\}_{l \in L}$ be composed of precisely one element 
of each maximal strongly connected component of $\reachgraph(\DD,a)$, 
except $C$. Replacing $\HH(a,I)$ by $\HH$ in $\Lambda \cup \{\HH(a,I_l)\}_{l \in L}\cup \{\HH(a,I)\}$, we obtain another (finite) external representation of $\CC$ composed of half-spaces in $\Sigma$. Then, by Theorem~\ref{Th-MaximalSCC}, $\HH$ is necessarily 
of the form $\HH(a,J)$, where $J \in C$. Moreover, as $\HH(a,J) \subset \HH(a,I)$, we have $J \subset I$, which shows that $I = J$ because $I$ is a minimal element of $C$. Thus, $\HH(a,I)$ is a minimal half-space containing $\CC$. 

\subsection{Proof of Theorem~\ref{Th-Main} and illustrations}\label{subsec:proof} We have now all the ingredients to establish Theorem~\ref{Th-Main}.

Given $a \in \mcA$, let $\mcC_a \subset 2^{\Sigma_a}$ be composed of the sets of half-spaces $\{\HH(a,I) \mid I \in C\}$, where $C$ ranges over the maximal strongly connected components of the directed graph $\reachgraph(\DD,a)$, 
with $\DD$ defined as the intersection of the half-spaces with apices different from $a$ in some external representation of $\CC$. In the first place, observe that Corollary~\ref{Coro-Same-MaximalSCC} shows 
$\mcC_a$ is independent of the choice of the external representation of $\CC$. 

Now, let $\Gamma$ be a non-redundant external representation of $\CC$. By Theorem~\ref{th:non_redundant_apices}, $\Gamma$ is the union, for $a \in \mcA$, of non-empty sets $\Gamma_a \subset \Gamma$ of half-spaces with apex $a$. Let $\Lambda\mydef \cup_{b\in \mcA\setminus \{a\}}\Gamma_b $ and $\DD\mydef\cap_{\HH\in \Lambda}\HH$. If $\{I_l\}_{l\in \oneto{q}} \subset E_a$ is such that $\Gamma_a = \{\HH(a,I_l)\}_{l \in \oneto{q}}$, then $L = [q]$ is a minimal subset of $\oneto{q}$ satisfying~\eqref{Eq-Nonredundant-Local}. Therefore, by Theorem~\ref{Th-MaximalSCC}, $\{I_l \}_{l\in \oneto{q}}$ is composed of precisely one element of each maximal strongly connected component of $\reachgraph(\DD,a)$. Thus, according to the discussion above, $\Gamma_a$ is composed of precisely one half-space of each set in the collection $\mcC_a$. This proves the ``only if'' part of the property in Theorem~\ref{Th-Main}.  

To prove the ``if'' part of the property in Theorem~\ref{Th-Main}, 
we only need to note that, according to Corollary~\ref{Coro-SameSCC} and the discussion above, we can replace a half-space in $\Gamma_a$ by any other half-space in the same set of the collection $\mcC_a$. By Theorem~\ref{Th-MaximalSCC}, we still obtain a non-redundant external representation of $\CC$. 

\begin{figure}[tp]
\begin{minipage}[c]{0.49\textwidth}
\begin{center}
\includemovie[mimetype=model/u3d,
text={\includegraphics[scale=0.5]{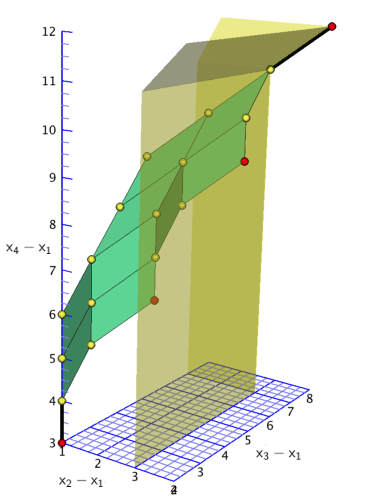}},
attach=true,
3Djscript=figures/Encompass.js,
3Dlights=CAD]{}{}{figures/cyclic_exchange1.u3d}
\end{center}
\end{minipage}
\begin{minipage}[c]{0.49\textwidth}
\begin{center}
\includemovie[mimetype=model/u3d,
text={\includegraphics[scale=0.5]{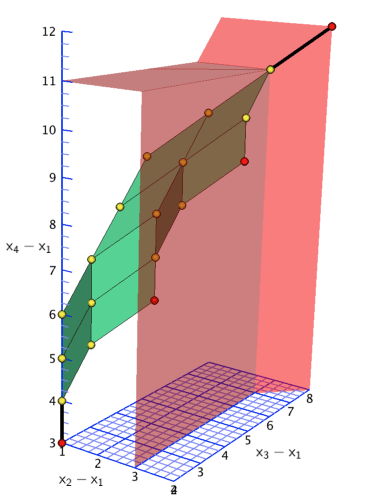}},
attach=true,
3Djscript=figures/Encompass2.js,
3Dlights=CAD]{}{}{figures/cyclic_exchange2.u3d}
\end{center}
\end{minipage}
\caption{Exchange of two mutually redundant half-spaces of the $4$th cyclic cone in $\projspace[3]$ (pictures have been drawn using {\normalfont\texttt{polymake}}~\cite{polymake}, JavaView, and JReality%
\protect\footnotemark).}\label{fig:cyclic_exchange}
\end{figure}

\footnotetext{Interactive 3D objects can be accessed by clicking on the pictures if U3D format is supported by the PDF reader, or by going to~\url{http://www.cmap.polytechnique.fr/~allamigeon/gallery.html}.}

\begin{example}\label{ex:cone_cyclic}
Let us illustrate the results of this section on the \myemph{$4$th cyclic cone} in $\projspace[3]$. For a given choice of four scalars $-\infty < t_1 < t_2 < t_3 < t_4$, this cone is generated by the vectors 
$(t_i \times 0,t_i \times 1,t_i \times 2, t_i \times 3)$ for $i = 1, \dots, 4$ (note that the product $t \times m$ corresponds to the tropical exponentiation $t^m$, which explains the name ``cyclic'' cone, see~\cite{blockyu06,AGK09}). Here we use the scalars $t_i \mydef i$ for $i = 1, \dots, 4$. The corresponding cone $\CC$ is depicted (in green) in Figure~\ref{fig:cyclic_exchange}. 
We start from a description $\Gamma$ of $\CC$ obtained by saturating the half-spaces associated with the non-trivial extreme vectors of the polar cones of $\CC$ (see Section~\ref{subsec:saturation}), and given by the following list of apices and sectors:
\[
\begin{array}{c@{\qquad}c@{\qquad}c@{\qquad}l}
(0,1,2,3), \{4\} 
& \textcolor{blue!70}{(0,3,7,11), \{1,3\}} 
& \textcolor{blue!70}{(0,3,7,11), \{1,4\}} 
& (0,1,2,6), \{3\} \\
\textcolor{red!80!black}{(0,1,3,5)}, \{1,4\} 
& (0,1,3,6), \{2,4\} 
& (0,1,3,7), \{1,3\} 
& (0,1,4,8), \{2,4\} \\
(0,1,5,9), \{2\} 
& \textcolor{red!80!black}{(0,2,4,7)}, \{1,4\}
& \textcolor{red!80!black}{(0,2,5,8)}, \{1,4\}
& (0,2,5,9), \{1,3\} \\
\textcolor{red!80!black}{(0,3,6,10)}, \{1,4\} 
& \textcolor{green!60!black}{(0,1,2,4), \{1,4\}} 
& \textcolor{green!60!black}{(0,1,2,4), \{2,4\}} 
& (0,4,8,12), \{1\} 
\end{array}
\tag{$\Gamma$}
\]
We number the corresponding half-spaces from $\HH_1$ to $\HH_{16}$ from left to right and top to bottom.

The set of non-redundant apices is given by the vectors which are not colored in red. For instance, it can be verified that the apex $(0,2,5,8)$ does not belong to $\mcA$. The tangent directed hypergraph $\GG(\Gamma \setminus \{\HH_{11}\},(0,2,5,8))$, generated by the set of half-spaces in $\Gamma$ having an apex distinct from $(0,2,5,8)$, is formed by the hyperarcs $(\{4\},\{3\})$, $(\{2\},\{3\})$, and $(\{1,3\},\{2\})$, see Figure~\ref{fig:reachgraph}. They are respectively associated with the half-spaces $\HH_{6}$ (or $\HH_{10}$), $\HH_8$, and $\HH_{12}$, which are active at $(0,2,5,8)$. As a consequence, the set $\{1,2,3,4\}$ is reachable from $\{1,4\}$. It can be verified that a half-space $\HH((0,2,5,8),I)$ contains $\CC$ if, and only if, $I \supset \{1,4\}$. In this case, the reachability graph $\reachgraph(\cap_{i \neq 11} \HH_i,a)$ consists of the nodes $\{1,4\}$, $\{1,2,4\}$, $\{1,3,4\}$, and $\{1,2,3,4\}$, and is necessarily strongly connected. By the first part of Theorem~\ref{Th-MaximalSCC}, we conclude that the vector $(0,2,5,8)$ is not a non-redundant apex. 

We now illustrate with apex $(0,3,7,11)$ the situation in which two half-spaces can be exchanged in a non-redundant representation. The tangent  directed hypergraph $\GG(\Gamma \setminus \{\HH_2,\HH_3\},(0,3,7,11))$ consists of the hyperarcs $(\{3\},\{4\})$, $(\{2\},\{3,4\})$, and $(\{4\},\{3\})$. A half-space $\HH$ with apex $(0,3,7,11)$ contains the cone $\CC$ if, and only if, there exists $i \in \{2,3,4\}$ such that $\{1, i\} \subset \sect(\HH)$. Let us denote by $\DD$ the cone $\cap_{\HH \in \Gamma \setminus \{\HH_2,\HH_3\}} \HH$ provided by the intersection of the half-spaces in $\Gamma$ with apices different from $(0,3,7,11)$. We deduce that the directed graph $\reachgraph(\DD,(0,3,7,11))$, depicted in Figure~\ref{fig:reachgraph}, is not strongly connected. Thus, the apex $(0,3,7,11)$ is non-redundant by Theorem~\ref{Th-MaximalSCC}. Besides, the graph $\reachgraph(\DD,(0,3,7,11))$ has only one maximal strongly connected component, composed of the subsets $\{1,3\}$, $\{1,4\}$, and $\{1,3,4\}$, the latter being the principal element of the component. It follows that the collection $\mcC_{(0,3,7,11)}$ of Theorem~\ref{Th-Main} is composed of a unique set of half-spaces $\{\HH((0,3,7,11),\{1,3\}), \HH((0,3,7,11),\{1,4\}), \HH((0,3,7,11),\{1,3,4\})\}$, and that only the first two half-spaces are minimal with respect to $\CC$ (see Figure~\ref{fig:cyclic_exchange}, where they are depicted in yellow and red respectively). By Theorem~\ref{Th-MaximalSCC}, we conclude that any non-redundant external representation obtained from $\Gamma$ 
contains precisely one of the half-spaces $\HH((0,3,7,11),\{1,3\})$ and $\HH((0,3,7,11),\{1,4\})$.

More generally, it can be verified that the non-redundant external representations 
which can be obtained from the representation $\Gamma$
are the ones containing all the half-spaces in black, and precisely one half-space in green and one in blue.
\end{example}

\begin{figure}[tp]
\begin{center}
\begin{tikzpicture}	[>=triangle 45,scale=0.8, vertex/.style={circle,draw=black,very thick,minimum size=2ex}, hyperedge/.style={draw=black,thick}, simpleedge/.style={draw=black,thick}]
\begin{scope}
\node [vertex] (v4) at (0,1) {$4$};
\node [vertex] (v1) at (2,2) {$1$};
\node [vertex] (v3) at (2,0) {$3$};
\node [vertex] (v2) at (5,1) {$2$};
\hyperedge{v1,v3}{v2};
\draw[simpleedge,->] (v4) -- (v3);	
\draw[simpleedge,->] (v2) to[out=-120,in=0] (v3);
\node at (2.5,-1) {(\refstepcounter{tikzfigures}\alph{tikzfigures}\label{fig:reachgraph1})};
\end{scope}
\begin{scope}[xshift=1cm,yshift=-7cm]
\node [vertex] (v1) at (1,3) {$1$};
\node [vertex] (v2) at (0,1) {$2$};
\node [vertex] (v3) at (3,2) {$3$};
\node [vertex] (v4) at (3,0) {$4$};
\hyperedge{v2}{v4,v3};
\draw[simpleedge,->] (v3) edge[out=-45,in=45] (v4);
\draw[simpleedge,->] (v4) edge[out=90,in=-90] (v3);
\node at (1.5,-1) {(\refstepcounter{tikzfigures}\alph{tikzfigures}\label{fig:reachgraph2})};
\end{scope}
\begin{scope}[xshift=12cm,vertex/.style={inner sep=3pt},yshift=-5cm]

\node [vertex] (v1234) at (0,6) {$\{1,2,3,4\}$};
\node [vertex] (v123) at (-2,4) {$\{1,2,3\}$};
\node [vertex] (v124) at (2,4) {$\{1,2,4\}$};
\node [vertex] (v12) at (0,2) {$\{1,2\}$};
\node [vertex] (v13) at (-4,-1) {$\{1,3\}$};
\node [vertex] (v14) at (4,-1) {$\{1,4\}$};
\node [vertex] (v134) at (0,-1) {$\{1,3,4\}$};

\draw[simpleedge,<->] (v1234) -- (v123);
\draw[simpleedge,<->] (v1234) -- (v124);
\draw[simpleedge,<->] (v1234) -- (v12); 
\draw[simpleedge,->] (v1234) to[out=-170,in=120] (v13);
\draw[simpleedge,->] (v1234) to[out=-10,in=60] (v14);
\draw[simpleedge,->] (v1234) to[out=-120,in=120] (v134);
\draw[simpleedge,<->] (v123) -- (v12);
\draw[simpleedge,->] (v123) to[out=-120,in=80] (v13);
\draw[simpleedge,->] (v123) to[out=-80,in=160] (v14);
\draw[simpleedge,->] (v123) to[out=-100,in=150] (v134);
\draw[simpleedge,<->] (v123) -- (v124);
\draw[simpleedge,<->] (v124) -- (v12);
\draw[simpleedge,->] (v124) to[out=-60,in=110] (v14);
\draw[simpleedge,->] (v124) to[out=-100,in=20] (v13);
\draw[simpleedge,->] (v124) to[out=-80,in=30] (v134);
\draw[simpleedge,->] (v12) -- (v13);
\draw[simpleedge,->] (v12) -- (v14);
\draw[simpleedge,->] (v12) -- (v134);
\draw[simpleedge,<->] (v13) -- (v134);
\draw[simpleedge,<->] (v14) -- (v134);
\draw[simpleedge,<->] (v13) to[out=-30,in=-150] (v14);

\node at (0,-3) {(\refstepcounter{tikzfigures}\alph{tikzfigures}\label{fig:reachgraph3})};
\end{scope}
\end{tikzpicture}
\end{center}
\caption{\eqref{fig:reachgraph1}~The tangent directed hypergraph $\GG(\{\HH_i\}_{i \neq 11},(0,2,5,8))$; \eqref{fig:reachgraph2}~The tangent directed hypergraph $\GG(\{\HH_i\}_{i \not \in \{2,3\}},(0,3,7,11))$; \eqref{fig:reachgraph3}~The reachability digraph $\reachgraph(\cap_{i \not \in \{2,3\}} \HH_i,(0,3,7,11))$ over the elements of $E_{(0,3,7,11)}$.}\label{fig:reachgraph}
\end{figure}

\subsection{The particular case of pure cones}\label{subsec:pure_case}

We say that the real polyhedral cone $\CC$ is \myemph{pure} when it coincides with the topological closure of its interior. This terminology originates from the equivalence between this definition, and the fact that the maximal (inclusion-wise) bounded cells of the natural cell decomposition of $\projspace$ induced 
by a generating set $\{v^r\}_{r\in \oneto{p}}$ of $\CC$ are all full-dimensional (the subcomplex of bounded cells is said to be \myemph{pure}). 

\begin{lemma}\label{Lemma-Pure-Case}
If the real polyhedral cone $\CC$ is pure, for any $a \in \mcB$ there exists at most one minimal half-space with respect to $\CC$ with apex $a$.
\end{lemma}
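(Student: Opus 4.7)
Plan. I would argue by contradiction, assuming that $\HH(a,I_1)$ and $\HH(a,I_2)$ are two distinct minimal half-spaces with common apex $a\in\mcB$. By minimality, neither $I_1\subset I_2$ nor $I_2\subset I_1$ (otherwise the larger half-space would strictly contain the smaller one, contradicting minimality), so both $I_1\setminus I_2$ and $I_2\setminus I_1$ are nonempty. For each $i\in I_1\setminus I_2$, Condition~\ref{item:C3minimal} of Theorem~\ref{Theo-Charac-Minimal} applied to $\HH(a,I_1)$ furnishes a generator $v^{r(i)}$ whose argmax set $A_{r(i)}\mydef\{k\in [n]\mid r(i)\in S_k(a)\}$ satisfies $A_{r(i)}\cap I_1=\{i\}$ and $A_{r(i)}\cap \compl{I_1}\neq\emptyset$. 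Since $v^{r(i)}\in \CC\subset \HH(a,I_2)$, the set $A_{r(i)}$ must meet $I_2$; combined with $A_{r(i)}\cap I_1=\{i\}$ and $i\notin I_2$, this yields an element $\phi(i)\in A_{r(i)}\cap(I_2\setminus I_1)$. The resulting map $\phi\colon I_1\setminus I_2\to I_2\setminus I_1$, together with its symmetric counterpart $\psi\colon I_2\setminus I_1\to I_1\setminus I_2$, will be the key combinatorial data; note that the witnesses $r(i)$ for distinct $i$'s are pairwise distinct, as a common witness would force two elements of $I_1$ to lie in the same $A_r$.

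Next I would exploit the purity hypothesis: $a$ lies in the closure of a full-dimensional bounded cell $X_T$ of the natural cell decomposition of $\projspace$ induced by the generators. Its type $T=(T_1,\ldots,T_n)$ is a partition of $[p]$ with $T_j\subset S_j(a)$ and $T_j\neq\emptyset$ for every $j$, and interior vectors $y$ of the tangent cone associated with $X_T$ satisfy the strict inequalities $y_{j(r)}<y_k$ for every generator $r\in [p]$ and every $k\in A_r\setminus\{j(r)\}$, where $j(r)$ denotes the unique index with $r\in T_{j(r)}$. These strict inequalities define a directed graph $G_T$ on $[n]$ which is necessarily acyclic, since some interior vector $y$ must realize all of them simultaneously.

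Finally, I would derive a contradiction with the acyclicity of $G_T$ using the alternating sequence $i\to\phi(i)\to\psi(\phi(i))\to\cdots$, which must eventually close up in the finite set $I_1\triangle I_2$. Each consecutive pair in this sequence is contained in the argmax set of a single witness generator, and so contributes an arc of $G_T$ between its two endpoints. The main obstacle is that the direction of this arc depends on the location of the witness in the partition $T$, producing both ``forward'' arcs (when $r(i)\in T_i$, giving the edge $i\to\phi(i)$) and ``backward'' ones (when $r(i)\in T_{\phi(i)}$ or in $T_k$ for some third $k\in A_{r(i)}\cap \compl{I_1}$). I expect the proof to resolve this either by constructing a purity type $T$ that places each witness $r(i)$ into the block $T_i$ --- in which case every arc is forward and the alternating sequence becomes a directed cycle in $G_T$, contradicting acyclicity --- or by a more delicate combinatorial argument that extracts a directed cycle regardless of how the witnesses distribute among the blocks of $T$.
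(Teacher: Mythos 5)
Your proposal takes a genuinely different route from the paper's, and it is incomplete at the crucial step.

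The paper's argument is much shorter and more direct. It first proves that if $\HH(a,I)$ and $\HH(a,J)$ both contain $\CC$ and $I \cap J \neq \emptyset$, then $\HH(a,I\cap J)$ also contains $\CC$: otherwise some $y \in \CC$ would satisfy $\vectinverse{a_{J\setminus I}} y = \vectinverse{a_{I\setminus J}} y > \vectinverse{a_{I\cap J}} y$, and perturbing $y$ to a nearby interior point $x$ of $\CC$ (possible since $\CC$ is pure) breaks the tie, violating $x \in \HH(a,I)$ or $x \in \HH(a,J)$. This immediately forces any two distinct minimal half-spaces with apex $a$ to have disjoint sectors $I$, $J$. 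But then $\CC \subset \HH(a,I) \cap \HH(a,J)$ gives $\vectinverse{a_I}x = \vectinverse{a_J}x \geq \vectinverse{a_{\compl{(I\cup J)}}} x$ for all $x \in \CC$; since $I \cap J = \emptyset$, the maximum $\vectinverse{a}x$ is attained twice, so $\CC$ lies in the hyperplane with apex $a$, contradicting that $\CC$ is pure and hence has non-empty interior. Note that purity is used twice: once as a local perturbation lemma, once as full-dimensionality.

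Your plan, by contrast, builds combinatorial data (witness generators, the maps $\phi,\psi$, a full-dimensional cell type $T$, a directed graph $G_T$) aiming to exhibit a directed cycle. The setup of $\phi$ and $\psi$ is correct, and the observation that distinct $i \in I_1 \setminus I_2$ have distinct witnesses is right. However, you explicitly identify the obstruction — that the arc between $i$ and $\phi(i)$ only appears in $G_T$ when the witness $r(i)$ lies in the block $T_i$, and may instead lie in $T_{\phi(i)}$ or in $T_k$ for some third $k \in A_{r(i)}$, in which case no arc between $i$ and $\phi(i)$ exists at all — and you do not resolve it. Offering two alternative sub-strategies (``construct a $T$ placing each $r(i)$ in $T_i$'' or ``a more delicate combinatorial argument'') without carrying out either one is a genuine gap, not a minor omission: the entire contradiction rests on it, and it is not clear that either sub-strategy can be made to work. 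In particular the first one is delicate because a single full-dimensional cell $X_T$ would have to accommodate all witnesses $r(i)$ and $r'(j)$ simultaneously in the right blocks, and there is no obvious reason such a cell exists. You also never actually verify the claim that interior directions of the tangent cone of $X_T$ satisfy the asserted strict inequalities, nor that the resulting $G_T$ is acyclic. I would recommend abandoning this framework in favor of the much simpler intersection-of-sectors argument sketched above.
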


\begin{proof}
In the first place, we claim that if $\HH(a,I)$ and $\HH(a,J)$ 
contain $\CC$ and $I \cap J \neq \emptyset$, then $\HH(a,I\cap J)$ 
also contains $\CC$. Otherwise, \ie{}\ if $\HH(a,I\cap J)$ does not contain $\CC$ (so $I$ and $J$ are not comparable), there exists $y \in \CC$ such that 
\[
\vectinverse{a_{\compl{(I\cup J)}}} y \mpplus \vectinverse{a_{J\setminus I}} y \mpplus \vectinverse{a_{I\setminus J}} y > \vectinverse{a_{I\cap J}} y \; .
\]
Besides,
\[
\begin{aligned}
\vectinverse{a_{I\setminus J}} y \mpplus  \vectinverse{a_{I\cap J}} y & \geq \vectinverse{a_{J\setminus I}} y \mpplus \vectinverse{a_{\compl{(I\cup J)}}} y \; ,\\
\vectinverse{a_{J\setminus I}} y \mpplus  \vectinverse{a_{I\cap J}} y & \geq \vectinverse{a_{I\setminus J}} y \mpplus \vectinverse{a_{\compl{(I\cup J)}}} y \;.
\end{aligned}
\] 
Thus, we have:
\[
\vectinverse{a_{J\setminus I}} y = \vectinverse{a_{I\setminus J}} y > \vectinverse{a_{I\cap J}} y \;.
\]
Since $\CC$ is pure, we can find $x$ in the interior of $\CC$ close enough to $y$ so that 
\[
\vectinverse{a_{J\setminus I}} x > \vectinverse{a_{I\cap J}} x, \ \vectinverse{a_{I\setminus J}} x > \vectinverse{a_{I\cap J}} x, \ \text{and} \ \vectinverse{a_{J\setminus I}} x \neq \vectinverse{a_{I\setminus J}} x \;.
\]
If, for instance, $\vectinverse{a_{J\setminus I}} x > \vectinverse{a_{I\setminus J}} x$, then $\vectinverse{a_{J\setminus I}} x > \vectinverse{a_I} x$. However, this contradicts that $x$ belongs to $\HH(a,I)$, and so also to $\CC$. Therefore, $\HH(a,I\cap J)$ must contain $\CC$, proving our claim.

According to the first part of the proof, 
if we suppose that $\HH(a,I)$ and $\HH(a,J)$ 
are two different minimal half-spaces with respect to $\CC$, 
then necessarily $I \cap J=\emptyset$. 
Since in that case for all $x\in \CC$ we have 
\[ 
\vectinverse{a_I} x  \geq \vectinverse{a_J} x \mpplus \vectinverse{a_{\compl{(I\cup J)}}} x 
\quad \text{and} \quad
\vectinverse{a_J} x  \geq \vectinverse{a_I} x \mpplus \vectinverse{a_{\compl{(I\cup J)}}} x \;,
\]
it follows that:
\[
\vectinverse{a_I} x = \vectinverse{a_J} x \geq \vectinverse{a_{\compl{(I\cup J)}}} x \;, \quad \text{for all} \ x \in \CC \;.
\]
As $I \cap J=\emptyset$, this implies $\CC$ is contained in the tropical hyperplane whose apex is $a$, which contradicts the fact that $\CC$ is pure (hence, it has a non-empty interior).
\end{proof}

\begin{proposition}\label{Prop-Pure-Case}
If the real polyhedral cone $\CC$ is pure, every non-redundant external representation of $\CC$ consists of precisely one half-space $\HH(a,I)$ for each non-redundant apex $a$. In particular, there exists a unique non-redundant external representation composed of minimal half-spaces.
\end{proposition}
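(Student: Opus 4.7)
The plan is to deduce the statement from Theorem~\ref{Th-Main}, Lemma~\ref{Lemma-Pure-Case}, and the structural description of maximal strongly connected components of $\reachgraph(\DD,a)$ given at the end of Section~\ref{subsec:non_redundant_with_same_apex}. First I would observe that by Theorem~\ref{Th-Main}, the number of half-spaces with apex $a \in \mcA$ appearing in any non-redundant external representation of $\CC$ equals $|\mcC_a|$, which is the number of maximal strongly connected components of $\reachgraph(\DD,a)$ (the choice of $\DD$ being irrelevant by Corollary~\ref{Coro-Same-MaximalSCC}). Consequently, the first assertion amounts to showing $|\mcC_a| = 1$ for every $a \in \mcA$, and this is the main step.

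To prove $|\mcC_a| = 1$, I would fix $a \in \mcA$ and argue as follows. Any maximal strongly connected component $C$ of $\reachgraph(\DD,a)$ is a non-empty finite subset of $E_a$, and therefore admits at least one element $I$ that is minimal for set inclusion among the members of $C$. By the discussion at the end of Section~\ref{subsec:non_redundant_with_same_apex}, such a minimal $I$ corresponds to a minimal half-space $\HH(a,I)$ with respect to $\CC$. Since $\CC$ is pure, Lemma~\ref{Lemma-Pure-Case} ensures that there is at most one minimal half-space with apex $a$, so the minimal element $I$ is uniquely determined and must therefore be common to every maximal strongly connected component of $\reachgraph(\DD,a)$. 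As distinct maximal strongly connected components are pairwise disjoint subsets of $E_a$, only one such component can exist, which yields $|\mcC_a| = 1$.

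Finally, for the uniqueness of a non-redundant external representation composed of minimal half-spaces, I would pick for each $a \in \mcA$ the unique minimal half-space $\HH(a, I^*_a)$ granted by Lemma~\ref{Lemma-Pure-Case}. By the previous step, $I^*_a$ belongs to the unique maximal strongly connected component of $\reachgraph(\DD,a)$, so Theorem~\ref{Th-Main} implies that the collection $\{\HH(a, I^*_a)\}_{a \in \mcA}$ is a non-redundant external representation of $\CC$. Conversely, any non-redundant external representation of $\CC$ composed of minimal half-spaces must, by the first assertion of this proposition and Lemma~\ref{Lemma-Pure-Case}, select $\HH(a,I^*_a)$ at each $a \in \mcA$, giving uniqueness. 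The argument is essentially a bookkeeping combination of tools already established, so I do not expect a genuine technical obstacle; the only subtlety is correctly matching the characterization of minimal half-spaces as minimal elements of maximal strongly connected components with the purity hypothesis through Lemma~\ref{Lemma-Pure-Case}.
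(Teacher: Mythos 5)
Your proposal is correct and follows essentially the same route as the paper: both reduce the first assertion to showing $\reachgraph(\DD,a)$ has a unique maximal strongly connected component, and both obtain this by observing that a minimal (for inclusion) element of each such component gives a minimal half-space (end of Section~\ref{subsec:non_redundant_with_same_apex}) and then invoking Lemma~\ref{Lemma-Pure-Case} together with disjointness of the components. The deduction of uniqueness of the non-redundant representation by minimal half-spaces is the same in both.
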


\begin{proof}
Assume that $\reachgraph(\DD,a)$ contains two different maximal strongly connected components $C$ and $C'$, and let $I$ and $J$ be minimal elements of $C$ and $C'$. Then, as explained above, $\HH(a,I)$ and $\HH(a,J)$ are different minimal half-spaces containing $\CC$, which is impossible by Lemma~\ref{Lemma-Pure-Case}. Thus, the graph $\reachgraph(\DD,a)$ has only one maximal strongly connected component. Besides, using the same argument, we can also conclude that this component has only one minimal element. This implies there is a unique non-redundant external representation of $\CC$ composed of minimal half-spaces.
\end{proof}
   
\section{Apices of non-redundant half-spaces and cell decomposition}\label{sec:cell_decomposition}
	
In this section we show that the non-redundant apices 
associated with a real polyhedral cone come from a small 
set of candidates. In the sequel, $\CC$ denotes a real polyhedral cone 
generated by the set of vectors $\{v^1, \dots, v^p\}\subset \projspace$. 

As already observed in Section~\ref{subsec:saturation}, 
the real polyhedral cone $\CC$ has an external representation composed of 
minimal half-spaces with respect to $\CC$. Besides, the apex of each minimal half-space belongs to $\CC$. In consequence, the non-redundant apices $\mcA$ associated with $\CC$ are apices of minimal half-spaces, and so they 
belong to the cells of the natural cell decomposition of $\projspace$ induced 
by the generators $\{v^r\}_{r\in \oneto{p}}$ of $\CC$ characterized by 
the conditions of Theorem~\ref{Theo-Charac-Minimal}. We next show that the elements of $\mcA$ satisfy a stronger property, 
implying they are special vertices (recall that cells composed of apices of minimal half-spaces need not be zero-dimensional).

\begin{definition}\label{Def-Distinguished}
Let $I$ be a non-empty proper subset of $\oneto{n}$ and  $j \in \compl{I}$. 
A vector $a \in \projspace$ is said to be an \myemph{$(I,j)$-vertex} of 
$\CC$ if
\begin{equation*}
S_i(a) \cap S_j(a) \not \subset 
\cup_{k \in I \setminus \left\{ i \right\} } S_k(a) \makebox{ for all } i\in I \; , \tag{C4}\label{eq:C4} 	
\end{equation*} 
and Conditions~\ref{item:C1} and~\ref{item:C2} are satisfied, 
where $(S_1(a),\dots,S_n(a))=\type(a)$ is the type of $a$ relative to 
the generating set $\{v^r\}_{r \in [p]}$ of $\CC$. 
\end{definition}

\begin{remark}
As in the case of Conditions~\ref{item:C1} and~\ref{item:C2}, Condition~\eqref{eq:C4} above is independent of the choice of the generating set of $\CC$. Indeed, assuming $\CC \subset \HH(a,I)$, it is equivalent to:
\begin{equation*}
\text{for each } i \in I \text{ there exists } x \in \CC \text{ such that } \vectinverse{a_i} x_i = \vectinverse{a_j} x_j > \mpsum_{k \in I \setminus \{i\}} \vectinverse{a_k} x_k \; . \tag{C4'}\label{eq:C4bis}
\end{equation*}
This provides a geometric interpretation of Condition~\eqref{eq:C4}: $\HH(a,I)$ separates the cone $\CC$ from the sector $\SS(a,j)$, and this separation is ``tight'', since $\SS(a,i) \cap \SS(a,j)$ has a non-empty interior for all $i \in I$.
\end{remark}

Observe that any vector $a$ satisfying the conditions of Definition~\ref{Def-Distinguished} is a vertex of the natural cell decomposition of $\projspace$ induced by $\{v^r\}_{r\in \oneto{p}}$. 
Let $G_S$ be the graph associated with the cell $X_S$, where $S=\type(a)$ (see Section~\ref{SectionBasicNotions}). 
Note that Condition~\eqref{eq:C4} above implies $S_i(a) \cap S_j(a)\neq \emptyset$ for all $i\in I$, and so node $j$ is connected in $G_S$ with each node of $I$. Moreover, by Condition~\ref{item:C2}, each node of $\compl{I}$ is connected in $G_S$ with some node of $I$. Therefore, the graph $G_S$ associated with an $(I,j)$-vertex of $\CC$ is always connected. This shows that $X_S$ is a zero-dimensional cell.

Also observe that if $a$ is an $(I,j)$-vertex of $\CC$, then $\HH(a,I)$ is a minimal half-space with respect to $\CC$, because Condition~\eqref{eq:C4} above is stronger than Condition~\ref{item:C3minimal}. 

The following proposition shows that $(I,j)$-vertices are associated with non-trivial extreme vectors of the $j$th polar of $\CC$. 

\begin{proposition}\label{Prop-Equiv-Distinguished-MinimalH}
The following three properties are equivalent:
\begin{enumerate}[label=(\roman*)]
\item\label{Prop-Equiv-Distinguished-MinimalH-Item1} the vector $a$ is an $(I,j)$-vertex of $\CC$,
\item\label{Prop-Equiv-Distinguished-MinimalH-Item2} $\HH(a,I)$ is a minimal half-space with respect to $\CC$ and $\vectinverse{a_I} \mpplus \vectinverse{a_j} e^j$ is a non-trivial extreme vector of the $j$th polar of $\CC$, 
\item\label{Prop-Equiv-Distinguished-MinimalH-Item3} $\HH(a,I)$ can be obtained by the saturation of a half-space associated with a non-trivial extreme vector of the $j$th polar of $\CC$.
\end{enumerate}
\end{proposition}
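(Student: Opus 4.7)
The plan is to prove the equivalences $(i) \Leftrightarrow (ii)$ and $(ii) \Leftrightarrow (iii)$, relying on Theorem~\ref{Theo-Charac-Minimal}, Theorem~\ref{TheoCharacExtremeIPolar}, and Proposition~\ref{PropExtremeSaturationMinimal}. The unifying observation is that Condition~\eqref{eq:C4}, the extremality criterion of Theorem~\ref{TheoCharacExtremeIPolar} applied to the vector $u \mydef \vectinverse{a_I} \mpplus \vectinverse{a_j} e^j$, and the tightness relations appearing in the saturation procedure all encode essentially the same combinatorial constraint on the generators $v^r$ of $\CC$: for each $i \in I$, there exists $r \in [p]$ such that $\vectinverse{a_i} v^r_i = \vectinverse{a_j} v^r_j$ and this common value strictly dominates $\vectinverse{a_k} v^r_k$ for every $k \in I \setminus \{i\}$.

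For $(i) \Rightarrow (ii)$, I would first remark that~\eqref{eq:C4} is strictly stronger than~\ref{item:C3minimal}, so together with~\ref{item:C1} and~\ref{item:C2} it directly yields the minimality of $\HH(a,I)$ by Theorem~\ref{Theo-Charac-Minimal}. Extremality of $u$ then amounts to checking, via Theorem~\ref{TheoCharacExtremeIPolar}, that for each $i \in I$ some generator $v^r$ realizes the tightness above; this is obtained by choosing $r \in (S_i(a) \cap S_j(a)) \setminus \cup_{k \in I \setminus \{i\}} S_k(a)$ from~\eqref{eq:C4} and unpacking the definitions of the sets $S_i(a)$ and $S_j(a)$ (the trivial entries $u_k = \mpzero$ for $k \notin I \cup \{j\}$ take care of the remaining indices in Theorem~\ref{TheoCharacExtremeIPolar}, and membership $u \in \ipolar[j]{\CC}$ follows from~\ref{item:C1}). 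Conversely, for $(ii) \Rightarrow (i)$, the minimality of $\HH(a,I)$ gives~\ref{item:C1} and~\ref{item:C2} directly, while extremality of $u$ combined with $v^r \in \CC \subset \HH(a,I)$ forces $\vectinverse{a_i} v^r_i$ to coincide with the global maximum $\mpplus_{h \in [n]} \vectinverse{a_h} v^r_h$, so that $r \in (S_i(a) \cap S_j(a)) \setminus \cup_{k \in I \setminus \{i\}} S_k(a)$ and~\eqref{eq:C4} holds.

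The implication $(iii) \Rightarrow (ii)$ is immediate from Proposition~\ref{PropExtremeSaturationMinimal}. The delicate step, which I expect to be the main obstacle, is $(ii) \Rightarrow (iii)$: I would apply the saturation procedure to the (possibly degenerate) half-space $\HH' = \{ x \in \projmaxspace \mid \mpplus_{i \in I} \vectinverse{a_i} x_i \geq \vectinverse{a_j} x_j \}$ associated with $u$. Proposition~\ref{PropExtremeSaturationMinimal} yields a minimal half-space $\HH(b,I)$ whose apex satisfies $\vectinverse{b_h} = \vectinverse{a_h}$ for $h \in I \cup \{j\}$; the remaining task is to promote this partial equality of apices to $b = a$ in $\projspace$, equivalently $\HH(b,I) = \HH(a,I)$. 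I would first show $\HH(b,I) \subseteq \HH(a,I)$: since $\vectinverse{a_i} = \vectinverse{b_i}$ for $i \in I$, this reduces to $b_k \leq a_k$ for $k \in [n] \setminus (I \cup \{j\})$, which follows from the saturation formula $\beta_k = \mpplus_r \lambda_r v^r_k$ together with the inequality $\mpplus_{h \in I \cup \{j\}} \vectinverse{a_h} v^r_h \geq \vectinverse{a_k} v^r_k$ guaranteed by $v^r \in \CC \subset \HH(a,I)$. The minimality of both half-spaces then forces equality, exhibiting $\HH(a,I)$ as the saturation of $\HH'$ and completing the proof.
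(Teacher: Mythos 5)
Your proof is correct and follows essentially the same route as the paper: (i)$\Leftrightarrow$(ii) via Theorems~\ref{Theo-Charac-Minimal} and~\ref{TheoCharacExtremeIPolar} together with the equivalence of \eqref{eq:C4} and \eqref{eq:C4bis}, (iii)$\Rightarrow$(ii) directly from Proposition~\ref{PropExtremeSaturationMinimal}, and (ii)$\Rightarrow$(iii) by saturating the half-space for $u$ and identifying the resulting apex $b$ with $a$. The only cosmetic difference is in the last step: you bound $\beta_k\leq a_k$ by unpacking the saturation formula and invoking $v^r\in\HH(a,I)$, and then conclude $\HH(b,I)=\HH(a,I)$ from minimality of both; the paper instead derives $b\leq a$ from $b\in\CC\subset\HH(a,I)$ and $\vectinverse{a_I}b=0$, and then obtains $a\leq b$ by a symmetric argument. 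Both variants are valid and rely on the same underlying facts.
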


\begin{proof}
\begin{description}[wide,font=\normalfont,parsep=0.5ex]
\item[\ref{Prop-Equiv-Distinguished-MinimalH-Item1} $\implies$ \ref{Prop-Equiv-Distinguished-MinimalH-Item2}] Assume the three conditions of Definition~\ref{Def-Distinguished} are satisfied. Thus, as we observed above, $\HH(a,I)$ is a minimal half-space with respect to $\CC$. This implies $\vectinverse{a_I} \mpplus \vectinverse{a_j} e^j$ belongs to the $j$th polar of $\CC$, because the inequality $\vectinverse{a_I} x \geq \vectinverse{a_j} x_j$ holds for all $x \in \CC$. Then, since Condition~\eqref{eq:C4} is satisfied, by Theorem~\ref{TheoCharacExtremeIPolar} we conclude that $\vectinverse{a_I} \mpplus \vectinverse{a_j} e^j$ is a non-trivial extreme vector of the $j$th polar of $\CC$.  

\item[\ref{Prop-Equiv-Distinguished-MinimalH-Item2} $\implies$ \ref{Prop-Equiv-Distinguished-MinimalH-Item1}] Conversely, if $\HH(a,I)$ is a minimal half-space with respect to $\CC$, Conditions~\ref{item:C1} and~\ref{item:C2} are satisfied. Besides, if $\vectinverse{a_I} \mpplus \vectinverse{a_j} e^j$ is a non-trivial extreme vector of the $j$th polar of $\CC$, 
Condition~\eqref{eq:C4bis}, and subsequently Condition~\eqref{eq:C4},
are satisfied by Theorem~\ref{TheoCharacExtremeIPolar}. 
Thus, $a$ is an $(I,j)$-vertex of $\CC$. 

\item[\ref{Prop-Equiv-Distinguished-MinimalH-Item2} $\implies$ \ref{Prop-Equiv-Distinguished-MinimalH-Item3}]
Suppose that $u \mydef \vectinverse{a_I} \mpplus \vectinverse{a_j} e^j$ is a non-trivial extreme vector of the $j$th polar of $\CC$ and $\HH(a,I)$ is minimal with respect to $\CC$. Let $\HH(b,J)$ be the half-space obtained by the saturation of $\{x \in \projmaxspace \mid \mpplus_{i \in [n] \setminus \{j\}} u_i x_i \geq u_j x_j\}$. From Proposition~\ref{PropExtremeSaturationMinimal}, it follows that $J=I$ and $u = \vectinverse{b_I} \mpplus \vectinverse{b_j} e^j$, and so $a_i = b_i$ for all $i \in I$. Besides, since $\HH(b,I)$ is minimal with respect to $\CC$, we have $b \in \CC$ by Corollary~\ref{Coro-Minimal-Apex}. Thus, $\vectinverse{a} b = \vectinverse{a_I} b = 0$, which shows that $b_h \leq a_h$ for all $h \in [n]$. The symmetric inequality can be obtained by exchanging $\HH(a,I)$ and $\HH(b,I)$ (since the former is also minimal), which proves $a = b$.
\item[\ref{Prop-Equiv-Distinguished-MinimalH-Item3} $\implies$ \ref{Prop-Equiv-Distinguished-MinimalH-Item2}] 
Straightforward by Proposition~\ref{PropExtremeSaturationMinimal}.\qedhere

\end{description}
\end{proof}

We are now ready to prove one of the main results of this section.   

\begin{theorem}\label{Theorem-Nonredundant-Distinguised} 
For any non-redundant apex $a\in \mcA$ there exist a non-empty proper subset 
$I$ of $\oneto{n}$ and $j\in \compl{I}$ such that $a$ is an $(I,j)$-vertex of $\CC$. 
\end{theorem}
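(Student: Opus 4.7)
The plan is to combine the machinery developed in the previous sections with Proposition~\ref{Prop-Equiv-Distinguished-MinimalH}, which identifies $(I,j)$-vertices as apices of minimal half-spaces arising from the saturation of half-spaces associated with non-trivial extreme vectors of the $j$th polars of $\CC$. The strategy is to produce a \emph{single} finite external representation of $\CC$ in which every half-space already has this special form, and then use Theorem~\ref{th:non_redundant_apices} to read off the conclusion for every $a \in \mcA$.

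First I would construct a finite external representation $\Gamma_0$ of $\CC$ as follows. For each $j \in \oneto{n}$, take the (finitely many) non-trivial extreme vectors of $\ipolar[j]{\CC}$, form the associated half-spaces, and saturate each of them in the sense of Section~\ref{subsec:saturation}. Since the trivial extreme vectors of the $j$th polars yield tautological inequalities, the non-trivial ones alone already give an external representation of $\CC$; and by Proposition~\ref{Prop-Saturation}, saturating each of these half-spaces only shrinks it while preserving $\CC$. Hence $\cap_{\HH \in \Gamma_0} \HH = \CC$. Moreover, by Proposition~\ref{PropExtremeSaturationMinimal}, each $\HH \in \Gamma_0$ is minimal with respect to $\CC$ (so it lies in $\Sigma$) and is of the form $\HH(a,I)$ with $a \in \CC$ and $\vectinverse{a_I} \mpplus \vectinverse{a_j} e^j$ equal to the extreme vector we started from.

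Next, since $\Gamma_0$ is finite and $\tau(\Gamma_0) = \Sigma$, greedy elimination of redundant half-spaces produces a subset $\Gamma \subset \Gamma_0$ which is a non-redundant external representation of $\CC$. By Theorem~\ref{th:non_redundant_apices}, the apices of the half-spaces in $\Gamma$ are \emph{exactly} the elements of $\mcA$. Therefore, for any $a \in \mcA$ there is a half-space $\HH(a,I) \in \Gamma \subset \Gamma_0$; by construction of $\Gamma_0$, this $\HH(a,I)$ is the saturation of a half-space associated with a non-trivial extreme vector of $\ipolar[j]{\CC}$ for some index $j$ (necessarily $j \in \compl{I}$, by the form of the associated inequality).

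Finally, applying the implication \ref{Prop-Equiv-Distinguished-MinimalH-Item3}~$\Rightarrow$~\ref{Prop-Equiv-Distinguished-MinimalH-Item1} of Proposition~\ref{Prop-Equiv-Distinguished-MinimalH} to $\HH(a,I)$ yields that $a$ is an $(I,j)$-vertex of $\CC$, which is the desired conclusion. There is no real obstacle here: all the hard work was already done in Sections~\ref{SectionPreliminaries}--\ref{SectionNon-redundantRepre}; the only care needed is to check that Proposition~\ref{PropExtremeSaturationMinimal} delivers both the minimality of the saturated half-space and the matching between the index $j$ and the resulting pair $(a,I)$, so that Proposition~\ref{Prop-Equiv-Distinguished-MinimalH} can be invoked with the correct sector pair.
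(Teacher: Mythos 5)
Your proof is correct, and it follows a genuinely different route from the paper. The paper picks an arbitrary non-redundant external representation $\Gamma$ composed of minimal half-spaces, takes the half-space $\HH(a,I) \in \Gamma$ with apex $a$, uses non-redundancy to find a witness $x$ outside $\HH(a,I)$ but inside all the other half-spaces, and then constructs the auxiliary vectors $w^r$ (tropical combinations of the generators $v^r$ with $x$) lying on the boundary of $\HH(a,I)$ and in $\CC$; a careful analysis of $\argmax(\vectinverse{a_I} w^r)$ and $\argmax(\vectinverse{a_{\compl{I}}} w^r)$, combined with Condition~\ref{item:C3minimal} of minimality, verifies Condition~\eqref{eq:C4bis} directly. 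Your approach instead builds a \emph{specific} external representation $\Gamma_0$ by saturating the half-spaces coming from the non-trivial extreme vectors of the $j$th polars, observes via Propositions~\ref{PropExtremeSaturationMinimal} and~\ref{Prop-Equiv-Distinguished-MinimalH} that every half-space in $\Gamma_0$ automatically sits at an $(I,j)$-vertex, reduces $\Gamma_0$ greedily to a non-redundant representation, and invokes Theorem~\ref{th:non_redundant_apices} to transfer the conclusion to all of $\mcA$. Your route is shorter and more modular, since it outsources the hard geometric work to Propositions~\ref{PropExtremeSaturationMinimal} and~\ref{Prop-Equiv-Distinguished-MinimalH}; the paper's argument is more self-contained at the level of this theorem and, as a by-product, shows the slightly stronger fact that for \emph{any} non-redundant external representation composed of minimal half-spaces, every half-space $\HH(a,I)$ in it makes $a$ an $(I,j)$-vertex for that very $I$. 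Both are valid; the one technical point you correctly flagged -- that $j\in\compl{I}$ after saturation -- is indeed delivered by Proposition~\ref{PropExtremeSaturationMinimal}, since the saturated sector set coincides with $\{i\neq j : u_i\neq\mpzero\}$.
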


\begin{proof}
Let $\Gamma \subset \Sigma$ be a finite set of minimal half-spaces with respect to $\CC$ such that $\CC = \cap_{\HH \in \Gamma} \HH$. Up to extracting a subset of half-spaces, we may assume $\Gamma$ is a non-redundant external representation of $\CC$.

Given a non-redundant apex $a$, let $\HH(a,I) \in \Gamma$ be a half-space with apex $a$. By assumption, $\HH(a,I)$ is non-redundant in $\Gamma$, so there exists a vector $x$ such that $x \not \in \HH(a,I)$ and $x \in \HH'$ for all $\HH' \in \Gamma \setminus \{\HH(a,I)\}$. For $r \in [p]$, let $w^r \in \projspace$ be the vector defined by:
\[
w^r \mydef (\vectinverse{a_{\compl{I}}} x) v^r \mpplus (\vectinverse{a_I} v^r) x \; . 
\]
This vector $w^r$ is located on the tropical (projective) segment joining $v^r$ and $x$. Moreover, since $x\not \in \HH(a,I)$ and $v^r\in \CC \subset\HH(a,I)$, we have $\vectinverse{a_I} x < \vectinverse{a_{\compl{I}}} x$ and  $\vectinverse{a_I} v^r \geq \vectinverse{a_{\compl{I}}} v^r$. Then, 
\[
\vectinverse{a_I} w^r = (\vectinverse{a_{\compl{I}}} x) (\vectinverse{a_I} v^r) \mpplus (\vectinverse{a_I} v^r) (\vectinverse{a_I} x) = (\vectinverse{a_{\compl{I}}} x) (\vectinverse{a_I} v^r)  
\]
and  
\[
\vectinverse{a_{\compl{I}}} w^r = (\vectinverse{a_{\compl{I}}} x) (\vectinverse{a_{\compl{I}}} v^r) \mpplus (\vectinverse{a_I} v^r) (\vectinverse{a_{\compl{I}}} x) = (\vectinverse{a_I} v^r) (\vectinverse{a_{\compl{I}}} x) \; . 
\]
Thus, $\vectinverse{a_I} w^r= \vectinverse{a_{\compl{I}}} w^r$, which means that $w^r$ lies on the boundary of $\HH(a,I)$. It follows that $w^r$ belongs to $\CC$, because $w^r$ also belongs to $\HH'$ for any $\HH' \in \Gamma \setminus \{\HH(a,I)\}$ (as a tropical linear combination of vectors of $\HH'$).

We now claim that $\argmax(\vectinverse{a_I} w^r) = \argmax(\vectinverse{a_I} v^r)$ and $\argmax(\vectinverse{a_{\compl{I}}} w^r) \supset \argmax(\vectinverse{a_{\compl{I}}} x)$ for all $r \in \oneto{p}$.  

Indeed, observe that for any $i \in I$, 
\[
(\vectinverse{a_I} v^r) (\vectinverse{a_i} x_i) < (\vectinverse{a_I} v^r) (\vectinverse{a_{\compl{I}}} x) = \vectinverse{a_I} w^r \;.
\]
Then, as $\vectinverse{a_i} w^r_i = (\vectinverse{a_{\compl{I}}} x) (\vectinverse{a_i} v^r_i) \mpplus (\vectinverse{a_I} v^r) (\vectinverse{a_i} x_i)$, we have $i\in \argmax(\vectinverse{a_I} w^r)$ if, and only if, $\vectinverse{a_i} v^r_i = \vectinverse{a_I} v^r$. This proves that $\argmax(\vectinverse{a_I} w^r)$ coincides with $\argmax(\vectinverse{a_I} v^r)$.

Similarly, let $j \in \argmax(\vectinverse{a_{\compl{I}}} x)$. Since 
$\vectinverse{a_I} v^r \geq \vectinverse{a_{\compl{I}}} v^r \geq 
\vectinverse{a_j} v^r_j$ and $\vectinverse{a_{\compl{I}}} x = \vectinverse{a_j} x_j $, it follows that:
\[\vectinverse{a_j} w^r_j = (\vectinverse{a_{\compl{I}}} x) (\vectinverse{a_j} v^r_j) \mpplus (\vectinverse{a_I} v^r) (\vectinverse{a_j} x_j) = 
(\vectinverse{a_I} v^r) (\vectinverse{a_j} x_j) = (\vectinverse{a_I} v^r) (\vectinverse{a_{\compl{I}}} x) \;.
\]
Thus, $\vectinverse{a_j} w^r_j$ is equal to $ \vectinverse{a_{\compl{I}}} w^r$, which shows that $j \in \argmax(\vectinverse{a_{\compl{I}}} w^r)$. This completes the proof of the claim. 

Now, consider any $j \in \argmax(\vectinverse{a_{\compl{I}}} x)$. Then, $j$ also belongs to $\argmax(\vectinverse{a_{\compl{I}}} w^r)$ for all $r \in [p]$. Besides, since the half-space $\HH(a,I)$ is minimal, Condition~\ref{item:C3minimal} is satisfied, so for each $i \in I$ there exists $r_i \in [p]$ such that $r_i \in S_i(a)$ and $r_i \not \in \cup_{k \in I \setminus \{i\}} S_k(a)$. 
Equivalently, $\argmax(\vectinverse{a_I} w^{r_i}) = \argmax(\vectinverse{a_I} v^{r_i})$ is reduced to the singleton $\{i\}$. As $\vectinverse{a_I} w^{r_i} = \vectinverse{a_{\compl{I}}} w^{r_i}$, we conclude that $w^{r_i}$ satisfies:
\[
\vectinverse{a_i} w_i^{r_i} = \vectinverse{a_j} w_j^{r_i} > \mpsum_{k \in I \setminus \{i\}} \vectinverse{a_k} w_k^{r_i} \; .
\]
This shows that Condition~\eqref{eq:C4bis} above is satisfied. Moreover, 
as $\HH(a,I)$ is a minimal half-space with respect to $\CC$, we know that Conditions~\ref{item:C1} and~\ref{item:C2} are also satisfied. In consequence, the non-redundant apex $a$ is an $(I, j)$-vertex of $\CC$.
\end{proof}

We are now going to study a sufficient condition for an $(I, j)$-vertex $a$ to be a non-redundant apex. We first show that this condition implies node $j$ does not belong to the head of the hyperarcs associated with half-spaces different from $\HH (a,I)$.

\begin{lemma}\label{lemma:hyperarc_head}
Let $a$ be an $(I, j)$-vertex of $\CC$ satisfying 
\begin{equation*}
S_i(a) \cap S_j(a) \not \subset 
\cup_{k \in \compl{\{j,i\}}} S_k(a) \makebox{ for all } i \in I \; . \tag{C5} \label{eq:C5} 
\end{equation*}
If $b$ is a $(K, l)$-vertex of $\CC$ such that the half-spaces $\HH(a,I)$ and $\HH(b,K)$ are different, and $\HH(b,K)$ is active at $a$, then $j \not \in \argmax(\vectinverse{b}_{\compl{K}} a)$.
\end{lemma}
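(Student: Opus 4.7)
The plan is to argue by contradiction. Suppose $j \in \argmax(\vectinverse{b}_{\compl{K}} a)$ and normalize the apex $b$ so that $\vectinverse{b} a = \mpone$; this ensures $a_h \leq b_h$ for every $h$ and $\argmax(\vectinverse{b} a) = \{h \mid a_h = b_h\}$. The activity of $\HH(b,K)$ at $a$, combined with our assumption, yields $a_j = b_j$.

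The first and main step is to show that $I \subset K$ and that $a_h = b_h$ for all $h \in I \cup \{j\}$. For each $i \in I$, I would use~\eqref{eq:C5} to select a generator $v^{r_i}$ with $\argmax(\vectinverse{a} v^{r_i}) = \{i, j\}$, and then exploit the telescoping identity $\vectinverse{b}_h v^{r_i}_h = \vectinverse{a}_h v^{r_i}_h + \vectinverse{b}_h a_h$ (in ordinary arithmetic). This bounds $\vectinverse{b} v^{r_i}$ above by $\vectinverse{a} v^{r_i}$, with equality attainable only at those elements of $\{i,j\}$ where $a_h = b_h$. Since $v^{r_i} \in \CC \subset \HH(b,K)$ and $j \notin K$, the half-space inequality applied to $v^{r_i}$ forces some index of $K$ to attain this maximum, and the only possible candidate is $i$. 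Hence $i \in K$ and $a_i = b_i$, giving the claimed inclusions.

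Next I would show that $a = b$. From the coordinate equalities on $I \cup \{j\}$ and the normalized inequalities $a_h \leq b_h$ elsewhere, a direct computation yields $\vectinverse{a}_I b = \mpone \leq \vectinverse{a}_{\compl{I}} b$ (the lower bound comes from the index $j \in \compl{I}$). Since $b \in \CC \subset \HH(a,I)$, the reverse inequality must hold, forcing $a_h = b_h$ for every $h \in \compl{I}$ and thus $a = b$. The assumption $\HH(a,I) \neq \HH(b,K)$ then becomes $I \neq K$, so combined with $I \subset K$ we obtain a strict inclusion $I \subsetneq K$.

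To conclude, pick any $k^* \in K \setminus I \subset \compl{I}$. Applying~\eqref{eq:C4} to the $(K,l)$-vertex $b = a$ at the index $k^*$ provides a generator $v^s \in \CC$ with $\vectinverse{a}_{k^*} v^s_{k^*} = \vectinverse{a}_l v^s_l > \vectinverse{a}_{k'} v^s_{k'}$ for every $k' \in K \setminus \{k^*\}$, in particular for every $k' \in I$. Hence $\vectinverse{a}_I v^s < \vectinverse{a}_{k^*} v^s_{k^*} \leq \vectinverse{a}_{\compl{I}} v^s$, contradicting $v^s \in \HH(a,I)$. The main obstacle is the first step: the strengthened hypothesis~\eqref{eq:C5} is essential there to pin down $\argmax(\vectinverse{a} v^{r_i}) = \{i,j\}$ exactly, since the weaker~\eqref{eq:C4} would only yield $\argmax(\vectinverse{a} v^{r_i}) \supset \{i,j\}$, which is not sharp enough to control $\argmax(\vectinverse{b} v^{r_i})$ and force $i \in K$.
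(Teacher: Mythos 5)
Your proof is correct, and it follows a genuinely different decomposition from the paper's. The paper's argument splits immediately into two cases according to whether or not $I \subset \argmax(\vectinverse{b}_K a)$; in the first case it builds a perturbation vector $x = a \mpplus \eta v^r$ (for a carefully chosen $\eta$) that violates $\HH(b,K)$, and in the second case it further splits into $I \subsetneq K$ and $I = K$, handling the former by applying (C4) to $b$ and the latter by deducing $a = b$. Your route instead establishes a stronger uniform conclusion first: after normalizing $\vectinverse{b}a = \mpone$, you show for every $i \in I$ both that $i \in K$ and $a_i = b_i$, by applying the half-space inequality $\vectinverse{b}_K v^{r_i} \geq \vectinverse{b}_{\compl K} v^{r_i}$ directly to the (C5)-generator $v^{r_i}$, with no perturbation required. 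This absorbs the paper's Case 1 entirely (it simply cannot occur), and then $a = b$ follows in one line from $b \in \HH(a,I)$, after which the $I \subsetneq K$ contradiction via (C4) is essentially the paper's computation but simplified by the identity $a = b$. What your approach buys is a cleaner Case-1 handling (no explicit $\eta$) and a single unified chain of implications $I \subset K \Rightarrow a = b \Rightarrow I \subsetneq K \Rightarrow$ contradiction; what the paper's approach buys is that it never needs to establish the full equality $a = b$ in the subcase $I \subsetneq K$, working instead only with the weaker relation $\vectinverse{b_i}a_i = \vectinverse{b_j}a_j$ for $i \in I$.
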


\begin{proof}
By contradiction, assume that $j \in \argmax(\vectinverse{b}_{\compl{K}}a) $. Then, we have $\vectinverse{b_j}a_j = \vectinverse{b_K}a$ because $\HH(b,K)$ is active at $a$.

In the first place, assume $I \not \subset \argmax(\vectinverse{b}_K a)$. Given $i \in I \setminus \argmax(\vectinverse{b}_K a)$, since $a$ satisfies Condition~\eqref{eq:C5}, we know that there exists $r \in [p]$ such that $r \in S_i(a) \cap S_j(a)$, and $r \not \in S_k(a)$ for all $k \not \in \{i, j\}$. Equivalently, $\vectinverse{a_i} v^r_i = \vectinverse{a_j} v^r_j > \vectinverse{a_k} v^r_k$ for all $k \not \in \{i, j\}$. Consider $\eta $ such that 
\[
\mpplus_{k \not \in \{i, j\}} \vectinverse{a_k} v^r_k < \mpinverse{\eta} < \vectinverse{a_i} v^r_i=\vectinverse{a_j} v^r_j \;.
\]
Then, the vector $x \mydef a \mpplus \eta v^r$ satisfies $x_i = \eta v^r_i$, $x_j = \eta v^r_j$, and $x_k = a_k$ for all $k \not \in \{i, j\}$. 
In particular, we have $x_k = a_k$ for all $k \in \argmax(\vectinverse{b}_K a)$ because $i \not \in \argmax(\vectinverse{b}_K a)$.  Besides, choosing $\eta$ such that $\mpinverse{\eta}$ is close enough to $\vectinverse{a_i} v^r_i$, we can also suppose that $\argmax(\vectinverse{b}_K x) \subset \argmax(\vectinverse{b}_K a)$. Then, we have 
\[
\vectinverse{b_j} x_j = \vectinverse{b_j} \eta v^r_j > \vectinverse{b_j} a_j = \vectinverse{b_K} a = \mpsum_{k \in \argmax(\vectinverse{b}_K a)} \vectinverse{b_k} x_k \geq \mpsum_{k \in \argmax(\vectinverse{b_K} x)} \vectinverse{b_k} x_k = \vectinverse{b_K} x \; .
\]
This shows that $x$ does not belong to the half-space $\HH(b,K)$. This is a contradiction because $x \in \CC$ (as a tropical linear combination of two elements of $\CC$) and $\CC \subset \HH(b,K)$ (by Condition~\ref{item:C1} applied to $\HH(b,K)$). 

Now assume $I\subset \argmax(\vectinverse{b_K}a)$. Then, since $\vectinverse{b_j}a_j = \vectinverse{b_K}a$, we have $\vectinverse{b_i} a_i = \vectinverse{b_j} a_j$ for all $i \in I$. It is convenient to split the rest of the proof into two cases:
\begin{itemize}
\item[$I \subsetneq K$:] Let $k \in K \setminus I$. Since $b$ is a $(K, l)$-vertex of $\CC$, by Condition~\eqref{eq:C4} there exists $r \in \oneto{p}$ such that $\vectinverse{b_k} v^r_k = \vectinverse{b_l} v^r_l > \mpplus_{h \in K \setminus \{k\}} \vectinverse{b_h} v^r_h$. Then, we have 
\begin{equation}\label{eq:hyperarc_head}
\vectinverse{a}_I v^r = \vectinverse{a_j} b_j (\vectinverse{b}_I v^r) \leq \vectinverse{a_j} b_j (\vectinverse{b}_{K \setminus \{k\}} v^r) < \vectinverse{a_j} b_j \vectinverse{b_k} v^r_k \leq \vectinverse{a_k} b_k \vectinverse{b_k} v^r_k = \vectinverse{a_k} v^r_k \; , 
\end{equation}
where the last inequality follows from $j \in \argmax(\vectinverse{b} a)$, because it implies $\vectinverse{b_j} a_j \geq \vectinverse{b_k} a_k$.
Since $k \in \compl{I}$, we conclude from~\eqref{eq:hyperarc_head} that $v^r \not \in \HH(a,I)$, which is a contradiction.
\item[$I = K$:] We know that $\HH(a,I)$ and $\HH(b,K) = \HH(b,I)$ are both minimal half-spaces with respect to $\CC$. Then, since $a \in \CC \subset \HH(b,I)$ by Corollary~\ref{Coro-Minimal-Apex}, it follows that for all $h \in \compl{I}$, 
\[
\vectinverse{b_j}a_j = \vectinverse{b_I}a \geq \vectinverse{b_{\compl{I}}} a \geq \vectinverse{b_h} a_h \;.
\]
Symmetrically, it can be proved that $\vectinverse{a_j} b_j \geq \vectinverse{a_h} b_h$ for all $h \in \compl{I}$, 
using the fact that $\vectinverse{a_j}b_j = \vectinverse{a_I}b$ and $b \in \HH(a,I)$. 
We conclude that $a$ and $b$ are identical (as elements of the projective space), which contradicts that $\HH(a,I)$ and $\HH(b,K)$ are different half-spaces.\qedhere
\end{itemize} 
\end{proof}

\begin{theorem}\label{th:sufficient_condition_non_redundant_apices}
If $a$ is an $(I,j)$-vertex of $\CC$ satisfying Condition~\eqref{eq:C5}, then $a$ is a non-redundant apex of $\CC$.
\end{theorem}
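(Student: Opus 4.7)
The plan is to apply part~(i) of Theorem~\ref{Th-MaximalSCC}: I will exhibit a finite family $\Lambda$ of half-spaces with apices distinct from $a$, and a finite (non-empty) family $\{\HH(a,I_l)\}_{l\in\oneto{q}}$ of half-spaces of apex $a$, such that $\Lambda$ and $\{\HH(a,I_l)\}_l$ jointly form an external representation of $\CC$, and show that the reachability digraph $\reachgraph(\DD,a)$ associated with $\DD\mydef\cap_{\HH\in\Lambda}\HH$ fails to be strongly connected. To build this collection of half-spaces, I start from the finite family obtained by saturating (in the sense of Proposition~\ref{Prop-Saturation}) the half-spaces associated with the non-trivial extreme vectors of the $l$-th polars $\ipolar[l]{\CC}$, for $l\in\oneto{n}$. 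By Proposition~\ref{PropExtremeSaturationMinimal}, each such saturation is a minimal half-space of the form $\HH(b,K)$ whose apex $b$ is a $(K,l)$-vertex of~$\CC$; and by the implication~(ii)~$\Rightarrow$~(iii) of Proposition~\ref{Prop-Equiv-Distinguished-MinimalH}, the half-space $\HH(a,I)$ itself appears among them. I then take $\Lambda$ to be the subfamily whose apices differ from~$a$, and let $\{\HH(a,I_l)\}_l$ consist of the remaining (apex-$a$) saturations.

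The decisive step is then Lemma~\ref{lemma:hyperarc_head}: for every $\HH(b,K)\in\Lambda$ active at $a$ (note that $\HH(b,K)\neq\HH(a,I)$ since their apices differ), the lemma yields $j\notin\argmax(\vectinverse{b}_{\compl{K}}a)$, that is, node $j$ belongs to no hyperarc head of the tangent directed hypergraph $\GG(\Lambda,a)$. Combined with $j\in\compl{I}$, this forces $j$---and \emph{a fortiori} the whole set $\oneto{n}$---to be unreachable from $I$ in $\GG(\Lambda,a)$. By transitivity of hypergraph reachability (a concatenation of hyperpaths is again a hyperpath), any directed path $I\to\cdots\to\oneto{n}$ in $\reachgraph(\DD,a)$ would produce a hyperpath from $I$ to $\oneto{n}$ in $\GG(\Lambda,a)$, which is impossible. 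Thus $\reachgraph(\DD,a)$ is not strongly connected, and part~(i) of Theorem~\ref{Th-MaximalSCC} yields $a\in\mcA$, as required.

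The main obstacle has already been surmounted by Lemma~\ref{lemma:hyperarc_head}; the remaining work is essentially bookkeeping. The one subtle point is to choose a representation in which \emph{every} half-space has as apex a $(K,l)$-vertex of $\CC$, so that the hypothesis of the lemma applies uniformly to the members of $\Lambda$. The saturations of the extreme vectors of the polar cones (via Proposition~\ref{PropExtremeSaturationMinimal}) provide precisely such a representation, which is why this particular choice of $\Lambda$ works.
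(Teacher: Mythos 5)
Your proof is correct and follows essentially the same path as the paper's: the same choice of external representation (saturations of non-trivial extreme vectors of the $j$th polars), the same use of Proposition~\ref{Prop-Equiv-Distinguished-MinimalH} to identify apices of half-spaces in that representation with $(K,l)$-vertices, and the same invocation of Lemma~\ref{lemma:hyperarc_head} to show node $j$ is unreachable. The only (cosmetic) variation is in the final step: you split the representation into $\Lambda$ and the apex-$a$ half-spaces and invoke Theorem~\ref{Th-MaximalSCC}\ref{item:Th-MaximalSCC1}, whereas the paper works directly with $\Gamma \setminus \{\HH(a,I)\}$ (applying Lemma~\ref{lemma:hyperarc_head} also to any apex-$a$ members of $\Gamma$) and concludes via Proposition~\ref{prop:local_redundancy_hypergraph_characterization} and Theorem~\ref{th:non_redundant_apices}; both routes reduce to the same hypergraph reachability criterion.
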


\begin{proof}
It suffices to consider the external representation $\Gamma$ of $\CC$ composed of the half-spaces provided by Proposition~\ref{Prop-Saturation}, when applied to the non-trivial extreme vectors of the $j$th polar of $\CC$, for all $j\in \oneto{n}$. 

By Proposition~\ref{Prop-Equiv-Distinguished-MinimalH}, a half-space $\HH(b,K)$ belongs to $\Gamma$ if, and only if, its apex $b$ is a $(K,l)$-vertex of $\CC$ for some $l \in \compl{K}$. Then, since $a$ is assumed to be an $(I,j)$-vertex of $\CC$ satisfying Condition~\eqref{eq:C5}, in particular we have $\HH(a,I) \in \Gamma$. Moreover, Lemma~\ref{lemma:hyperarc_head} ensures that $j \not \in H$ for any hyperarc $(T, H)$ in the tangent directed hypergraph $\GG(\Gamma \setminus \{\HH(a,I)\},a)$, because such hyperarc is associated with a half-space $\HH(b,K)$ such that $b$ is a $(K, l)$-vertex of $\CC$ for some $l\in \oneto{n}\setminus K$. As a consequence, the set $[n]$ cannot be reachable from $I$ in $\GG(\Gamma \setminus \{\HH(a,I)\},a)$. From Proposition~\ref{prop:local_redundancy_hypergraph_characterization}, we conclude that $\HH(a,I)$ is not redundant in $\Gamma$, and then $a \in \mcA$ by Theorem~\ref{th:non_redundant_apices}.
\end{proof}

\begin{remark}
When $\CC \subset \projspace[2]$, Theorems~\ref{Theorem-Nonredundant-Distinguised} and~\ref{th:sufficient_condition_non_redundant_apices} allow us to establish that the non-redundant apices of $\CC$ are precisely the vectors $a\in\projspace[2]$ such that $a$ is an $(I,j)$-vertex of $\CC$ for some non-empty proper subset $I$ of $\oneto{3}$ and $j \not \in I$. 

To see this, in the first place assume $I = \{i_1,i_2\}$, with $i_1 \neq i_2$. Then, Condition~\eqref{eq:C5} amounts to $S_{i_1}(a) \cap S_j(a) \not \subset S_{i_2}(a)$ and $S_{i_2}(a) \cap S_j(a) \not \subset S_{i_1}(a)$, which is equivalent to Condition~\eqref{eq:C4}. Thus, Theorem~\ref{th:sufficient_condition_non_redundant_apices} ensures that $a$ is a non-redundant apex.

Assume now $I$ consists of only one element $i$, and let $k \neq j$ be the second element of $\compl{I}$. Let $\Gamma$ be a non-redundant external representation of $\CC$, and assume it does not contain any half-space with apex $a$. Since the half-space $\HH(a,\{i\})$ is redundant with respect to $\Gamma$, the tangent directed hypergraph $\GG(\Gamma,a)$ must necessarily contain a hyperarc from $\{i\}$ to $\{j\}$ or to $\{k\}$ (but not to $\{j,k\}$ because no half-space in $\Gamma$ has apex $a$). Suppose, for instance, that $\{i\}$ is connected with $\{j\}$ by a hyperarc associated with a half-space in $\Gamma$, and let $b$ be its apex. Thus $\mpinverse{b}_i a_i = \mpinverse{b_j} a_j > \mpinverse{b_k} a_k$. We get a contradiction, since $\mpinverse{a}_i b_i < \mpinverse{a_j} b_j \mpplus \mpinverse{a_k} b_k$ while $b \in \CC \subset \HH(a,\{i\})$. 
\end{remark}

We now exhibit a class of real polyhedral cones for which the non-redundant apices are precisely the vertices satisfying Definition~\ref{Def-Distinguished}. 

\begin{definition}\label{def:generic_extremities}
The real polyhedral cone $\CC$ is said to have \myemph{generic extremities} if for each of its generators $v^r$ there exists a non-trivial (\ie~of  
positive radius) Hilbert ball containing $v^r$ and included in $\CC$.
\end{definition}

\begin{remark}\label{remark:generic_extremities}
Definition~\ref{def:generic_extremities} does not depend on the choice of the generating set $\{v^r\}_{r \in \oneto{p}}$. Indeed, $\CC$ has generic extremities if, and only if, for each $x \in \CC$ there exists a non-trivial Hilbert ball $\BB$ such that $x \in \BB \subset \CC$.

To see this, let $x = \mpplus_{r \in \oneto{p}} \lambda_r v^r$ be an arbitrary element of $\CC$. 
For each $r\in \oneto{p}$, suppose that the Hilbert ball with center $c^r$ and radius $\epsilon > 0$ is contained in $\CC$ and contains $v^r$. Without loss of generality, we can assume $\min_{i \in \oneto{n}} (v^r_i - c^r_i) = 0$ for all $r \in \oneto{p}$. Define $c \mydef \mpplus_{r \in \oneto{p}} \lambda_r c^r$, and let $\BB$ be the Hilbert ball with center $c$ and radius $\epsilon$.

First, let us show that $x \in \BB$. For each $i \in \oneto{n}$, there exists $r \in \oneto{p}$ such that $x_i = \lambda_r v^r_i$. Since $c_i \geq \lambda_r c^r_i$, we have $x_i - c_i \leq v^r_i - c^r_i \leq \epsilon$. Similarly, let $s \in \oneto{p}$ such that $c_i = \lambda_s c^s_i$. Then, $x_i - c_i \geq v^s_i - c^s_i \geq 0$ by assumption. It follows that $d_H(x,c) \leq \epsilon$. 

It remains to prove that $\BB \subset \CC$. Consider any $y \in \BB$ and, without loss of generality, assume that $\min_{i \in [n]}(y_i - c_i) = 0$. For each $i \in \oneto{n}$, define $\mu_i\mydef \mpinverse{(\epsilon c^{r_i}_i)}y_i$, where $r_i\in \oneto{p}$ is such that $c_i=\mpplus_{r \in \oneto{p}} \lambda_r c^r_i=\lambda_{r_i} c^{r_i}_i$. Observe that $\mpinverse{\epsilon}\lambda_{r_i} \leq \mu_i \leq \lambda_{r_i}$. We claim that 
\[
y = \mpplus_{j \in \oneto{n}} \mu_j (c^{r_j} \mpplus \epsilon c^{r_j}_j e^j) \;.
\]
Given $i \in \oneto{n}$, the equality $y_i = \mu_i \epsilon c^{r_i}_i = \mu_i (c^{r_i} \mpplus \epsilon c^{r_i}_i e^i)_i$ holds by definition of $\mu_i$. Besides, for $j \neq i$, we have 
\[
\mu_j (c^{r_j} \mpplus \epsilon c^{r_j}_j e^j)_i = \mu_j c^{r_j}_i \leq \lambda_{r_j} c^{r_j}_i \leq \lambda_{r_i} c^{r_i}_i \leq \epsilon \mu_i c^{r_i}_i\;.\]
This shows that $y_i$ is the maximum of the $\mu_j (c^{r_j} \mpplus \epsilon c^{r_j}_j e^j)_i$ for $j \in \oneto{n}$, proving the claim. Finally, since each $c^{r_i} \mpplus \epsilon c^{r_i}_i e^i$ belongs the Hilbert ball with center $c^{r_i}$ and radius $\epsilon$, which is contained in $\CC$, we conclude that $y \in \CC$. 
\end{remark}

Note that, from Remark~\ref{remark:generic_extremities},  
we conclude that any real polyhedral cone which has generic extremities is pure. 

The term \myemph{generic extremities} originates from the fact the aforementioned property holds if, and only if, each extreme vector of $\CC$ belongs to a non-trivial Hilbert ball contained in $\CC$. This enforces that around each of its extreme vectors, the cone has the shape of a Hilbert ball, ensuring a certain ``genericity''.

\begin{remark}
It can be shown that the cone $\CC$ has generic extremities as soon as the following two conditions hold:
\begin{enumerate}[label=(\roman*)] 
\item $\CC$ is pure;
\item the $2 \times 2$-minors 
\[
v^r_i v^s_j \mpplus v^r_j v^s_i = \max\{v^r_i + v^s_j, v^r_j + v^s_i\} \qquad (i,j \in [n], \ r, s \in [p], \ i \neq j, \ r \neq s)
\]
are non-singular, \ie\ the maximum in the right-hand side is reached exactly once.
\end{enumerate}
In particular, the latter condition is satisfied when the vectors $v^1, \dots, v^p$ are in general position in the sense of~\cite{RGST}.
\end{remark}

We say that a cone $\DD$ \myemph{approximates} the cone $\CC$ with precision $\epsilon > 0$ if the Hausdorff distance between $\CC$ and $\DD$ (derived from the metric $d_H$) is bounded by $\epsilon$.
Observe that the real polyhedral cone $\CC$ can be approximated with an arbitrary precision by another one having generic extremities: given $\epsilon > 0$, it suffices to define the tropical cone $\CC_\epsilon$ as the one generated by the Hilbert balls $\BB^r$ with center $v^r$ and radius $\epsilon$, for $r \in [p]$, \ie{}\ the set of tropical linear combinations of the form:
\[
\lambda_1 x^1 \mpplus \dots \mpplus \lambda_p x^p \;, \quad \text{where} \ \lambda_r \in \maxplus \ \text{and}\  x^r \in \BB^r \ \text{for}\  r \in [p].
\]
Since any Hilbert ball with center $c$ and radius $\epsilon$ is polyhedral (its extreme vectors are the vectors $c \mpplus \epsilon c_i e^i$ for $i\in \oneto{n}$), $\CC_\epsilon$ is a real polyhedral cone. Moreover, it can be shown that $\CC_\epsilon$ approximates $\CC$ with precision $\epsilon$. 
Also note that other deformations are possible, for instance choosing balls with different radii for each generator, or approximating each generator by a generic polytrope\footnote{A \myemph{polytrope} is a tropical cone which is also convex in the classical sense, see~\cite{JoswigKulas2010} for further details. We say that it is \myemph{generic} when its extreme vectors are in general position.} containing it. 

\begin{example}
Three Hilbert balls of radius $\frac{1}{2}$ centered at the 
generators $v^1=(0,1,3)$, $v^2=(0,4,1)$ and $v^3=(0,9,4)$ of the 
tropical cone of Figure~\ref{FigTropicalCone} 
are depicted on the left-hand side of Figure~\ref{FigGenericExt}.
Due to the shape of these non-trivial Hilbert balls of $\projspace[2]$, 
it is geometrically clear that the tropical cone of 
Figure~\ref{FigTropicalCone} does not have generic extremities. 
 
A tropical cone with generic extremities 
is shown on the right-hand side of Figure~\ref{FigGenericExt}. 
This cone is generated by the Hilbert balls on the left. 
Observe that letting the radii of these Hilbert balls tend to zero, 
the tropical cone of Figure~\ref{FigTropicalCone} can be 
approximated as much as we want. 
   
\begin{figure}
\begin{center}
\begin{tikzpicture}[convex/.style={draw=lightgray,fill=lightgray,fill opacity=0.7},convexborder/.style={ultra thick},>=triangle 45,scale=0.7]
\begin{scope}[point/.style={black}]
\draw[gray!30,very thin] (-0.5,-0.5) grid (9.5,6.5);
\draw[gray!50,->] (-0.5,0) -- (9.5,0) node[color=gray!50,below] {$x_2-x_1$};
\draw[gray!50,->] (0,-0.5) -- (0,6.5) node[color=gray!50,above] {$x_3-x_1$};	

\node[coordinate] (b1_center) at (1,3) {};
\path (b1_center) ++ (-0.5,-0.5) node[coordinate] (b1_1) {};
\path (b1_center) ++ (0,-0.5) node[coordinate] (b1_2) {};
\path (b1_center) ++ (0.5,0) node[coordinate] (b1_3) {};
\path (b1_center) ++ (0.5,0.5) node[coordinate] (b1_4) {};
\path (b1_center) ++ (0,0.5) node[coordinate] (b1_5) {};
\path (b1_center) ++ (-0.5,0) node[coordinate] (b1_6) {};

\filldraw[convex] (b1_1) -- (b1_2) -- (b1_3) -- (b1_4) -- (b1_5) -- (b1_6) -- cycle;
\draw[convexborder] (b1_1) -- (b1_2) -- (b1_3) -- (b1_4) -- (b1_5) -- (b1_6) -- cycle;
\filldraw[point] (b1_center) circle (1.5pt);

\node[coordinate] (b2_center) at (4,1) {};
\path (b2_center) ++ (-0.5,-0.5) node[coordinate] (b2_1) {};
\path (b2_center) ++ (0,-0.5) node[coordinate] (b2_2) {};
\path (b2_center) ++ (0.5,0) node[coordinate] (b2_3) {};
\path (b2_center) ++ (0.5,0.5) node[coordinate] (b2_4) {};
\path (b2_center) ++ (0,0.5) node[coordinate] (b2_5) {};
\path (b2_center) ++ (-0.5,0) node[coordinate] (b2_6) {};

\filldraw[convex] (b2_1) -- (b2_2) -- (b2_3) -- (b2_4) -- (b2_5) -- (b2_6) -- cycle;
\draw[convexborder] (b2_1) -- (b2_2) -- (b2_3) -- (b2_4) -- (b2_5) -- (b2_6) -- cycle;
\filldraw[point] (b2_center) circle (1.5pt);

\node[coordinate] (b3_center) at (9,4) {};
\path (b3_center) ++ (-0.5,-0.5) node[coordinate] (b3_1) {};
\path (b3_center) ++ (0,-0.5) node[coordinate] (b3_2) {};
\path (b3_center) ++ (0.5,0) node[coordinate] (b3_3) {};
\path (b3_center) ++ (0.5,0.5) node[coordinate] (b3_4) {};
\path (b3_center) ++ (0,0.5) node[coordinate] (b3_5) {};
\path (b3_center) ++ (-0.5,0) node[coordinate] (b3_6) {};

\filldraw[convex] (b3_1) -- (b3_2) -- (b3_3) -- (b3_4) -- (b3_5) -- (b3_6) -- cycle;
\draw[convexborder] (b3_1) -- (b3_2) -- (b3_3) -- (b3_4) -- (b3_5) -- (b3_6) -- cycle;
\filldraw[point] (b3_center) circle (1.5pt);
\end{scope}
\begin{scope}[xshift=11.8cm,point/.style={blue!50},ball/.style={green!60!black,ultra thick,densely dotted},ballcenter/.style={green!60!black},hsborder/.style={orange,dashdotted,thick},hs/.style={draw=none,pattern=north west lines,pattern color=orange,fill opacity=0.5},apex/.style={orange!90!black}]
	
\draw[gray!30,very thin] (-0.5,-0.5) grid (9.5,6.5);
\draw[gray!50,->] (-0.5,0) -- (9.5,0) node[color=gray!50,below] {$x_2-x_1$};
\draw[gray!50,->] (0,-0.5) -- (0,6.5) node[color=gray!50,above] {$x_3-x_1$};	

\node[coordinate] (b1_center) at (1,3) {};
\path (b1_center) ++ (-0.5,-0.5) node[coordinate] (b1_1) {};
\path (b1_center) ++ (0,-0.5) node[coordinate] (b1_2) {};
\path (b1_center) ++ (0.5,0) node[coordinate] (b1_3) {};
\path (b1_center) ++ (0.5,0.5) node[coordinate] (b1_4) {};
\path (b1_center) ++ (0,0.5) node[coordinate] (b1_5) {};
\path (b1_center) ++ (-0.5,0) node[coordinate] (b1_6) {};

\node[coordinate] (b2_center) at (4,1) {};
\path (b2_center) ++ (-0.5,-0.5) node[coordinate] (b2_1) {};
\path (b2_center) ++ (0,-0.5) node[coordinate] (b2_2) {};
\path (b2_center) ++ (0.5,0) node[coordinate] (b2_3) {};
\path (b2_center) ++ (0.5,0.5) node[coordinate] (b2_4) {};
\path (b2_center) ++ (0,0.5) node[coordinate] (b2_5) {};
\path (b2_center) ++ (-0.5,0) node[coordinate] (b2_6) {};

\node[coordinate] (b3_center) at (9,4) {};
\path (b3_center) ++ (-0.5,-0.5) node[coordinate] (b3_1) {};
\path (b3_center) ++ (0,-0.5) node[coordinate] (b3_2) {};
\path (b3_center) ++ (0.5,0) node[coordinate] (b3_3) {};
\path (b3_center) ++ (0.5,0.5) node[coordinate] (b3_4) {};
\path (b3_center) ++ (0,0.5) node[coordinate] (b3_5) {};
\path (b3_center) ++ (-0.5,0) node[coordinate] (b3_6) {};

\path let \p1=(b1_1), \p2 = (b2_1) in node[coordinate] (b1_1_b2_1) at (\x2,\y1) {};
\coordinate (b2_1_b3_3) at (6,0.5);
\coordinate (b3_5_b1_5) at (8,3.5);
\filldraw[convex] (b1_5) -- (b1_6) -- (b1_1) -- (b1_1_b2_1) -- (b2_1) -- (b2_1_b3_3) -- (b3_3) -- (b3_4) -- (b3_5) -- (b3_5_b1_5) -- cycle;
\draw[convexborder,very thick] (b1_5) -- (b1_6) -- (b1_1) -- (b1_1_b2_1) -- (b2_1) -- (b2_1_b3_3) -- (b3_3) -- (b3_4) -- (b3_5) -- (b3_5_b1_5) -- cycle;

\draw[ball] (b1_1) -- (b1_2) -- (b1_3) -- (b1_4) -- (b1_5) -- (b1_6) -- cycle;
\draw[ball] (b2_1) -- (b2_2) -- (b2_3) -- (b2_4) -- (b2_5) -- (b2_6) -- cycle;
\draw[ball] (b3_1) -- (b3_2) -- (b3_3) -- (b3_4) -- (b3_5) -- (b3_6) -- cycle;

\draw[hsborder] (0.5,-0.5) -- (0.5,3) -- (4,6.5);
\filldraw[hs] (0.5,-0.5) -- (0.5,3) -- (4,6.5) -- (4.3,6.5) -- (0.7,2.9) -- (0.7,-0.5) -- cycle;

\draw[hsborder] (-0.5,2.5) -- (3.5,2.5) -- (3.5,-0.5);
\filldraw[hs] (-0.5,2.5) -- (3.5,2.5) -- (3.5,-0.5) -- (3.7,-0.5) -- (3.7,2.7) -- (-0.5,2.7) -- cycle;

\draw[hsborder] (-0.5,0.5) -- (6,0.5) -- (9.5,4);
\filldraw[hs] (-0.5,0.5) -- (6,0.5) -- (9.5,4) -- (9.5,4.3) -- (5.9,0.7) -- (-0.5,0.7) -- cycle;

\draw[hsborder] (-0.5,4.5) -- (9.5,4.5) -- (9.5,-0.5);
\filldraw[hs] (-0.5,4.5) -- (9.5,4.5) -- (9.5,-0.5) -- (9.3,-0.5) -- (9.3,4.3) -- (-0.5,4.3) -- cycle;

\draw[hsborder] (-0.5,3.5) -- (8,3.5) -- (9.5,5);
\filldraw[hs] (-0.5,3.5) -- (8,3.5) -- (9.5,5) -- (9.5,4.7) -- (8.1,3.3) -- (-0.5,3.3) -- cycle;

\filldraw[ballcenter] (b1_center) circle (1.5pt);
\filldraw[point] (b1_5) circle (3.5pt) node[above left=1.5pt] {$w^5$};
\filldraw[apex] (b1_6) circle (3.5pt);
\filldraw[point] (b1_1) circle (3.5pt) node[below left=1.5pt] {$w^1$};
\filldraw[apex] (b1_1_b2_1) circle (3.5pt);
\filldraw[ballcenter] (b2_center) circle (1.5pt);
\filldraw[point] (b2_1) circle (3.5pt) node[below left=1.5pt] {$w^2$};
\filldraw[apex] (b2_1_b3_3) circle (3.5pt);
\filldraw[ballcenter] (b3_center) circle (1.5pt);
\filldraw[point] (b3_3) circle (3.5pt) node[right=3pt] {$w^3$};
\filldraw[apex] (b3_4) circle (3.5pt);
\filldraw[point] (b3_5) circle (3.5pt) node[above=3pt] {$w^4$};
\filldraw[apex] (b3_5_b1_5) circle (3.5pt);
\end{scope}
\end{tikzpicture}
\end{center}
\caption{Three Hilbert balls (left) and a tropical cone with generic extremities (right).}
\label{FigGenericExt}
\end{figure}
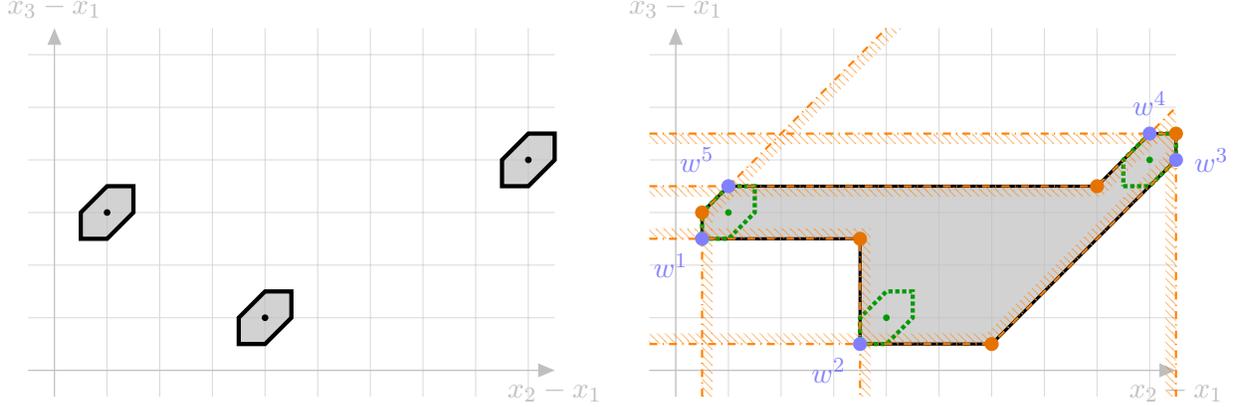
\end{example}

The following theorem shows that if $\CC$ has generic extremities, then the vertices of the cell decomposition introduced in Definition~\ref{Def-Distinguished} are precisely the non-redundant apices. Besides, it proves that they provide the unique non-redundant external representation composed of minimal half-spaces.
\begin{theorem}\label{th:generic_extremities}
If the real polyhedral cone $\CC$ has generic extremities, the non-redundant apices of $\CC$ are precisely the vectors $a$ for which there exist a non-empty proper subset $I$ of $\oneto{n}$ and $j \in \compl{I}$ such that $a$ is an $(I,j)$-vertex of $\CC$. 

Moreover, each such set $I$ is uniquely determined, and the collection of the half-spaces $\HH(a,I)$ is the unique non-redundant external representation of $\CC$ composed of minimal half-spaces.  
\end{theorem}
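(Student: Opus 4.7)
The plan is to combine generic extremities with the purity machinery of Section~\ref{subsec:pure_case} and the sufficient condition of Theorem~\ref{th:sufficient_condition_non_redundant_apices}. First I would observe that generic extremities implies purity: by Remark~\ref{remark:generic_extremities}, every $x\in \CC$ lies in a non-trivial Hilbert ball $\BB\subset \CC$, whose classical interior is contained in the interior of $\CC$, so $\CC$ equals the closure of its interior. Under purity, Lemma~\ref{Lemma-Pure-Case} gives at most one minimal half-space per apex; since every $(I,j)$-vertex $a$ produces the minimal half-space $\HH(a,I)$ (as observed after Definition~\ref{Def-Distinguished}), the set $I$ is necessarily unique for each such $a$. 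Proposition~\ref{Prop-Pure-Case} then delivers the existence and uniqueness of a non-redundant external representation composed of minimal half-spaces.

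One inclusion of the announced equivalence is immediate from Theorem~\ref{Theorem-Nonredundant-Distinguised}: every non-redundant apex is an $(I,j)$-vertex. For the reverse inclusion, by Theorem~\ref{th:sufficient_condition_non_redundant_apices} it suffices to show that, under generic extremities, every $(I,j)$-vertex $a$ of $\CC$ satisfies the stronger Condition~\eqref{eq:C5}. Fix $i\in I$. Applying reformulation~\eqref{eq:C4bis} of~\eqref{eq:C4} supplies some $x\in \CC$ with $\vectinverse{a_i} x_i = \vectinverse{a_j} x_j > \mpplus_{h\in I\setminus\{i\}} \vectinverse{a_h} x_h$; the goal is to construct $x'\in \CC$ satisfying the analogous strict inequality for every $h\in \compl{\{i,j\}}$, that is, to break any ``tight'' coordinate $k\in \compl{I}\setminus\{j\}$ where $\vectinverse{a_k} x_k = \vectinverse{a_i} x_i$ already holds.

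The construction proceeds in two perturbation stages, both enabled by the ambient Hilbert ball $\BB\subset\CC$ with $x\in\BB$ furnished by Remark~\ref{remark:generic_extremities}. First I would reach an interior point $y\in\mathrm{int}(\CC)$ close to $x$ and lying on the classical affine hyperplane $H=\{w\in\projspace:\vectinverse{a_i} w_i=\vectinverse{a_j} w_j\}$; since $\mathrm{int}(\BB)\subset\mathrm{int}(\CC)$, this amounts to finding a point of $H\cap\mathrm{int}(\BB)$ near $x$. Second, once $y$ is secured, I would perturb it by $\delta d$ with $d_i=d_j$ (preserving the equality defining $H$) and $d_k<d_i$ for every $k\in\compl{\{i,j\}}$ (strictly decreasing $\vectinverse{a_k}$ relative to $\vectinverse{a_i}$). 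For $\delta>0$ sufficiently small, $y+\delta d\in\CC$ because $y$ is interior, the already-strict inequalities persist by continuity, and every tight coordinate gets broken, producing the desired $x'$.

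The hard part will be the interior-reaching step, i.e.\ verifying that $H$ meets $\mathrm{int}(\CC)$ arbitrarily close to $x$. This can fail only if $H$ supports the ball $\BB$ at $x$, a degenerate configuration in which $x$ sits at a vertex of the hypercube-shaped $\BB$ along the directions $i$ and $j$; under generic extremities, enough flexibility is available (either by choosing another witness $x$ among the tropical linear combinations of generators fulfilling~\eqref{eq:C4bis}, or by invoking Remark~\ref{remark:generic_extremities} at a nearby point of the relevant cell) to bypass this degeneracy, after which the rest of the perturbation becomes a routine continuity argument.
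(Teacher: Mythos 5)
Your overall scaffolding is right: generic extremities implies purity, so Lemma~\ref{Lemma-Pure-Case} and Proposition~\ref{Prop-Pure-Case} give uniqueness of the set $I$ and of the non-redundant minimal representation; one inclusion is Theorem~\ref{Theorem-Nonredundant-Distinguised}; and the reverse inclusion reduces, via Theorem~\ref{th:sufficient_condition_non_redundant_apices}, to showing every $(I,j)$-vertex satisfies Condition~\eqref{eq:C5}. You correctly isolate the key remaining step. However, the perturbation argument you propose for that step cannot work, and the difficulty is not just the ``hard part'' you flag at the end.

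The problem is that the intermediate point $y$ you are trying to reach does not exist. Your $y$ is required simultaneously to (a) lie in $\operatorname{int}(\CC)$, (b) lie on $H = \{w : \vectinverse{a_i}w_i = \vectinverse{a_j}w_j\}$, and (c) be close enough to the witness $x$ that the strict inequalities $\vectinverse{a_i}x_i > \vectinverse{a_h}x_h$, $h \in I\setminus\{i\}$, persist. Conditions (b) and (c) force $\vectinverse{a_I}y = \vectinverse{a_i}y_i = \vectinverse{a_j}y_j \leq \vectinverse{a}_{\compl{I}}y$. But $\CC\subset\HH(a,I)$ implies $\operatorname{int}(\CC)\subset\operatorname{int}(\HH(a,I)) = \{w : \vectinverse{a_I}w > \vectinverse{a}_{\compl{I}}w\}$, so (a) gives the opposite strict inequality --- a contradiction. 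Dropping (c) does not save the argument either: if $y\in\operatorname{int}(\CC)\cap H$, interiority again forces $\vectinverse{a_I}y > \vectinverse{a_j}y_j = \vectinverse{a_i}y_i$, so some $h\in I\setminus\{i\}$ has $\vectinverse{a_h}y_h > \vectinverse{a_i}y_i$; your perturbation $y + \delta d$ with $d_i = d_j > d_h$ can only undo this for $\delta$ large, precisely when you can no longer guarantee $y + \delta d \in \CC$. The two requirements ($\delta$ small for membership, $\delta$ large to break the tie) are irreconcilable.

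The paper proceeds differently and by contradiction. Assuming~\eqref{eq:C5} fails for some $i$, it uses~\eqref{eq:C4} to pick a witness $v$ that is an \emph{extreme generator} of $\CC$ (not an arbitrary point of $\CC$), so that the whole strength of generic extremities applies to $v$: being extreme in $\CC$ and lying in a Hilbert ball $\BB\subset\CC$, $v$ must be one of the $n$ extreme points $c\mpplus\epsilon c_l e^l$ of $\BB$, i.e. $v$ differs from the center $c$ only in coordinate $l$. One then perturbs coordinate $l$ inward to land in $\operatorname{int}(\BB)$, and shows that perturbing coordinate $j$ (resp.\ $k$) leads to a vector in $\BB\subset\CC\subset\HH(a,I)$ that violates the half-space inequality unless $l = j$ (resp.\ $l = k$), forcing $j = k$, a contradiction. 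The decisive structural input is the rigidity of extreme points of Hilbert balls, which constrains $v$ to a single coordinate direction; your construction never exploits extremality of the witness and cannot obtain this rigidity, which is why it runs into the interior/boundary conflict described above.
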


\begin{proof}
In the first place, we prove that any $(I,j)$-vertex of $\CC$ satisfies Condition~\eqref{eq:C5}. 

By the contrary, assume $a$ is an $(I,j)$-vertex of $\CC$ for which Condition~\eqref{eq:C5} does not hold. Then, for some $i \in I$, given any extreme vector $v$ of $\CC$ 
satisfying  $\vectinverse{a_i} v_i=\vectinverse{a_j} v_j=\vectinverse{a} v =\vectinverse{a_I}v$, there exists $k \in \compl{\{i,j\}}$ such that $\vectinverse{a_j} v_j = \vectinverse{a_k} v_k = \vectinverse{a_I} v$. Since Condition~\eqref{eq:C4} holds, $v$ can be chosen so that $\vectinverse{a_i} v_i=\vectinverse{a_j} v_j > \vectinverse{a_h} v_h$ for all $h\in I\setminus \{ i\}$, and so $k\in \compl{I}$. 

Let $\BB$ be a Hilbert ball with center $c$ and radius $\epsilon > 0$ such that $v \in \BB \subset \CC$. Since $v$ is extreme in $\CC$, it is also extreme in $\BB$. Recalling that the extreme vectors of $\BB$ are the vectors $c\oplus \epsilon c_i e^i$ for $i\in \oneto{n}$, it follows that there exists $l \in \oneto{n}$ such that $v_l = \epsilon c_l$, and $v_h = c_h$ for $h \neq l$. Then, for any $0< \eta < \epsilon$, the vector $x$ defined by $x_l = \mpinverse{\eta} v_l$, and $x_h = v_h$ for $h \neq l$, is in the interior of $\BB$. 

Now, suppose that $l \neq j$. Since $x$ is in the interior of $\BB$, there exists $\eta' > 0$ such that the vector $y$ defined by $y_j = \eta' x_j$, and $y_h = x_h$ for $h \neq j$, belongs to $\BB$. Then, we have
\[
\vectinverse{a_j} y_j > \vectinverse{a_j} x_j = \vectinverse{a_j} v_j = \vectinverse{a_I} v \geq \vectinverse{a_I} x = \vectinverse{a_I} y \; .
\]
However, this is impossible, because $y \in \BB \subset \CC \subset \HH(a,I)$. Thus, $l$ must be equal to $j$. The same reasoning holds with $k$ instead of $j$, and leads to $l = k$. Since $j$ and $k$ are distinct, we obtain a contradiction. Therefore, every $(I,j)$-vertex of $\CC$ must satisfy Condition~\eqref{eq:C5}. As a consequence, we conclude from Theorems~\ref{Theorem-Nonredundant-Distinguised} and~\ref{th:sufficient_condition_non_redundant_apices} that the non-redundant apices are precisely those vertices $a$ for which there exist a non-empty proper subset $I$ of $\oneto{n}$ and $j \in \compl{I}$ such that $a$ is an $(I,j)$-vertex of $\CC$.  

Finally, assume $a$ is both an $(I,j)$-vertex and an $(I',j')$-vertex of $\CC$. Then, since $\HH(a,I)$ and $\HH(a,I')$ are minimal half-spaces with respect to $\CC$ and $\CC$ is pure, by Lemma~\ref{Lemma-Pure-Case} we necessarily have $I = I'$. This proves that $I$ is indeed uniquely determined. Moreover, by Proposition~\ref{Prop-Pure-Case}, there is a unique non-redundant external representation composed of minimal half-spaces. According to Lemma~\ref{Lemma-Pure-Case} and Proposition~\ref{Prop-Equiv-Distinguished-MinimalH}, $\HH(a,I)$ is the only half-space with apex $a$ appearing in such representation.
\end{proof}

\begin{example}
The non-redundant apices of the cone of Figure~\ref{FigGenericExt} (right) are the vectors $(0,\frac{1}{2},3)$, $(0,\frac{7}{2},\frac{5}{2})$, $(0,6,\frac{1}{2})$, $(0,\frac{19}{2},\frac{9}{2})$, and $(0,8,\frac{7}{2})$, which are respectively $(\{2\},\cdot)$-, $(\{2,3\},1)$-, $(\{3\},\cdot)$-, $(\{1\},\cdot)$-, and $(\{1,2\},3)$-vertices (the notation $(I,\cdot)$ stands for any couple $(I,j)$ with $j \not \in I$). They are depicted in orange together with the corresponding minimal half-spaces, while the extreme vectors $w^1,\ldots ,w^5$ of the cone are represented in blue. 
\end{example}

\begin{figure}
\begin{center}
\includemovie[mimetype=model/u3d,
text={\includegraphics[scale=0.5]{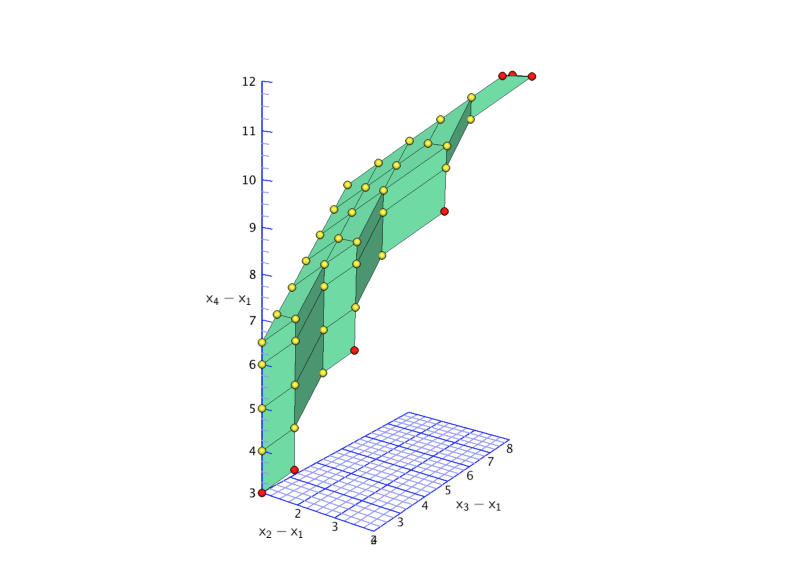}},
attach=true,
3Djscript=figures/Encompass3.js,
3Dlights=CAD]{}{}{figures/cyclic_pure_perturbation.u3d}
\end{center}
\caption{Perturbation of the $4$th cyclic cone of $\projspace[3]$ into a pure cone.}\label{fig:cyclic_pure_perturbation}
\end{figure}

\begin{remark}
Theorem~\ref{th:generic_extremities} cannot be generalized to the case of pure cones. As an example, consider the perturbation of the $4$th cyclic cone in $\projspace[3]$ generated by the following vectors:
\[
\begin{array}{c@{\qquad}c@{\qquad}c@{\qquad}c@{\qquad}c}
(0,1,2,3) & (0,1,\frac{5}{2},\frac{7}{2}) & (0,\frac{3}{2},\frac{5}{2},\frac{7}{2}) & (0,\frac{3}{2},4,6) & (0,2,4,6) \\
(0,\frac{5}{2},6,9) & (0,3,6,9) & (0,\frac{7}{2},\frac{15}{2},12) & (0,\frac{7}{2},8,12) & (0,4,8,12)  
\end{array}
\]
This cone can be verified to be pure (see Figure~\ref{fig:cyclic_pure_perturbation}), for instance by testing that the subcomplex of bounded cells of the natural cell decomposition induced by its generators is pure and full-dimensional. For the sake of completeness, we provide a {\normalfont\texttt{polymake}}\footnote{Version 2.12 or later.} script allowing to check this property:
\begin{footnotesize}
\begin{verbatim}
application "tropical";
$gen = new Matrix<Rational>([[0,1,2,3],[0,1,5/2,7/2],[0,3/2,5/2,7/2],[0,3/2,4,6],[0,2,4,6],
                             [0,5/2,6,9],[0,3,6,9],[0,7/2,15/2,12],[0,7/2,8,12],[0,4,8,12]]);
$p = new TropicalPolytope<Rational>(POINTS=>-$gen);
$trunc_vertices = $p->PSEUDOVERTICES->minor(All,range(1,$p->AMBIENT_DIM));
$n_vertices = scalar(@{$trunc_vertices});
$all_ones = new Vector<Rational>([ (1)x$n_vertices ]);
$vertices = ($all_ones|$trunc_vertices);
$max_cells = $p->ENVELOPE->BOUNDED_COMPLEX->MAXIMAL_POLYTOPES;
$cell_complex = new fan::PolyhedralComplex(VERTICES=>$vertices,MAXIMAL_CELLS=>$max_cells);
if ($cell_complex->FULL_DIM && $cell_complex->PURE) { 
    print "The cone is pure." 
} else { 
    print "The cone is not pure." 
};
\end{verbatim}
\end{footnotesize}
The external representation obtained by saturating the half-spaces associated with the non-trivial extreme vectors of the polar cones contains a half-space with apex $a = (0,1,\frac{5}{2},\frac{9}{2})$. The type of the latter vector is $S(a) = (\{1,2\},\{1,2,3\},\{2,4,5\},\{4,5,\dots,10\})$, so that $S_2(a) \cap S_3(a) \not \subset S_4(a)$ and $S_4(a) \cap S_3(a) \not \subset S_2(a)$. The vector $a$ is consequently a $(\{2,4\},3)$-vertex. However, by Theorem~\ref{th:non_redundant_apices}, $a$ is not a non-redundant apex since the following list of half-spaces provides a non-redundant external representation of the cone:
\[
\begin{array}{c@{\quad}c@{\quad}c@{\quad}c}
(0,1,2,3), \{4\} & (0,1,2,13/2), \{3\} & (0,1,7/2,13/2), \{2,4\} & (0,1,9/2,17/2), \{2,4\} \\
(0,1,11/2,19/2), \{2\} & (0,3/2,5/2,9/2), \{1,4\} & (0,3/2,7/2,8), \{1,3\} & (0,2,4,7), \{1,4\} \\
(0,2,5,19/2), \{1,3\} & (0,3,6,10), \{1,4\} & (0,3,7,23/2), \{1,3\} & (0,4,8,12), \{1\} 
\end{array}
\]

\end{remark}

\section*{Acknowledgements}

Both authors are very grateful to St{\'e}phane Gaubert for many helpful discussions on tropical convexity, and for his suggestion to study the special case of tropical cones with generic extremities. The authors also thank the two anonymous reviewers for their comments and suggestions which helped to improve the presentation of the results. 

\bibliographystyle{alpha}
\def\cprime{$'$} \def\cprime{$'$}

\end{document}